\newtheorem*{rep@theorem}{\rep@title}
\newcommand{\newreptheorem}[2]{%
\newenvironment{rep#1}[1]{%
 \def\rep@title{#2 \ref{##1}}%
 \begin{rep@theorem}}%
 {\end{rep@theorem}}}
\newtheorem{thm}{Theorem}[section]
\newtheorem{prop}[thm]{Proposition} 
\newtheorem{lem}[thm]{Lemma}
\newtheorem{cor}[thm]{Corollary}
\theoremstyle{definition}
\newtheorem{dfn}[thm]{Definition}
\newtheorem{exmpl}[thm]{Example}
\newtheorem{?}[thm]{Question}
\newtheorem*{blankdfn}{Definition}
\theoremstyle{remark}
\newtheorem{rmk}[thm]{Remark}
\newcommand{\HOM}{\mathbb{H}\text{om}}
\newcommand{\FR}{\mathfrak}
\newcommand{\ds}{\displaystyle}
\newcommand{\tql}{\textquotedblleft}
\newcommand{\tqr}{\textquotedblright}
\begin{document}

\author{Scott Atkinson}
\title{Convex Sets Associated to $C^*$-Algebras}
\address{University of Virginia, Charlottesville, VA, USA}
\email{saa6uy@virginia.edu}

\begin{abstract}
For $\FR{A}$ a separable unital $C^*$-algebra and $M$ a separable McDuff II$_1$-factor, we show that the space $\HOM_w(\FR{A},M)$ of weak approximate unitary equivalence classes of unital $*$-homomorphisms $\FR{A}\rightarrow M$ may be considered as a closed, bounded, convex subset of a separable Banach space -- a variation on N. Brown's convex structure $\HOM(N,R^\mathcal{U})$.  When $\FR{A}$ is nuclear, $\HOM_w(\FR{A},M)$ is affinely homeomorphic to the trace space of $\FR{A}$, but in general $\HOM_w(\FR{A},M)$ and the trace space of $\FR{A}$ do not share the same data (several examples are provided).  We characterize extreme points of $\HOM_w(\FR{A},M)$ in the case where either $\FR{A}$ or $M$ is amenable; and we give two different conditions -- one necessary and the other sufficient -- for extremality in general.  The universality of $C^*(\mathbb{F}_\infty)$ is reflected in the fact that for any unital separable $\FR{A}, \HOM_w(\FR{A},M)$ may be embedded as a face in $\HOM_w(C^*(\mathbb{F}_\infty),M)$.  We also extend Brown's construction to apply more generally to $\HOM(\FR{A},M^\mathcal{U})$.

\smallskip

\noindent\textbf{Keywords.} $C^*$-algebra, II$_1$-factor, convex, approximate unitary equivalence, trace, ultrapower.

\end{abstract}

\maketitle

\section{Introduction}

In this paper we will introduce and investigate a convex structure on the space $\HOM_w(\FR{A},M)$ of equivalence classes of $*$-homomorphisms from a unital separable $C^*$-algebra $\FR{A}$ to a separable McDuff II$_1$-factor $M$.  
Placing a tractable structure on equivalence classes of homomorphisms between operator algebras is no new idea (e.g. Ext$(\FR{A})$).  In fact, N. Brown presented a convex structure on the typically \emph{nonseparable} space $\HOM(N,R^\mathcal{U})$ of unitary equivalence classes of $*$-homomorphisms from a separable II$_1$-factor $N$ into the ultrapower of the separable hyperfinite II$_1$-factor $R^\mathcal{U}$ in \cite{topdyn}.  In the present paper, we extend the scope to $C^*$-algebras and 
replace the approximation mechanism of Brown's construction -- the ultrapower -- with the mechanism of weak approximate unitary equivalence, 
allowing us to consider separable target algebras. The result is a \emph{separable} adaptation, $\HOM_w(\FR{A},M)$, of Brown's $\HOM(N,R^\mathcal{U})$ that still retains a convex structure.  We also exhibit a convex structure on a generalization, $\HOM(\FR{A},M^\mathcal{U})$, of Brown's $\HOM(N, R^\mathcal{U})$.  There are interesting connections (and disconnections) between algebraic concepts (e.g. traces, ideals, commutants) and concepts associated with convex geometry (e.g. affine maps, faces, extreme points).  These interactions are explored through both general theorems and specific examples.  

\begin{blankdfn}Given a unital separable $C^*$-algebra $\FR{A}$ and a separable II$_1$-factor $N$, two unital $*$-homomorphisms $\pi, \rho: \FR{A} \rightarrow N$ are \emph{weakly approximately unitarily equivalent} -- denoted $\pi \sim \rho$ -- if there is a sequence of unitaries $\left\{ u_n \right\} \subset \mathcal{U}(N)$ such that for every $a \in \FR{A}$, \[\lim_{n\rightarrow \infty} ||\pi(a) - u_n \rho(a) u_n^*||_2 = 0\] where $||x||_2^2 = \tau(x^*x)$ for $\tau$ the unique tracial state on $N$ (see \cite{voiculescu}, \cite{ding-hadwin}, and \cite{orbits}).  It will be useful to keep the following equivalent formulation of this definition in mind. For $\pi,\rho: \FR{A}\rightarrow N$, $\pi \sim \rho$ if and only if for every finite subset $F \subset \FR{A}_{\leq 1}$ and every $\varepsilon > 0$ there is a unitary $u \in \mathcal{U}(N)$ such that \[||\pi(a) - u\rho(a)u^*||_2 < \varepsilon\] for every $a \in F$.
\end{blankdfn}

\begin{blankdfn}We let $\HOM_w(\FR{A},N)$ denote the space of unital $*$-homomorphisms $\FR{A} \rightarrow N$ modulo the equivalence relation of weak approximate unitary equivalence, and we let $[\pi]$ denote the equivalence class in $\HOM_w(\FR{A},N)$ of $\pi: \FR{A}\rightarrow N$. As we explain below in Definition \ref{metric}, the space $\HOM_w(\FR{A},N)$ can be naturally metrized in a way similar to that of Definition 1.2 of \cite{topdyn}. 
\end{blankdfn}

The foundation of the present paper is the following theorem.

\begin{repthm}{convexlikestructure}
If $M$ is a separable McDuff II$_1$-factor, then $\HOM_w(\FR{A},M)$ may be considered as a closed, bounded, convex subset of a separable Banach space.
\end{repthm} 

\noindent Recall that a II$_1$-factor $M$ is McDuff if and only if $M \cong M \otimes R$, where $R$ denotes the separable hyperfinite II$_1$-factor. We establish the above theorem by showing that $\HOM_w(\FR{A},M)$ satisfies the axioms for a \tql convex-like structure\tqr as in Definition 2.1 of \cite{topdyn}.  In a later paper, \cite{capfri}, it was shown that these axioms characterize a closed, bounded, convex subset of a Banach space. 

It is natural to ask why we restrict to McDuff targets. The main reason is the existence of isomorphisms $\sigma_M: M\otimes R \rightarrow M$ with the following property: \[\sigma_M \circ (\text{id}_M \otimes 1_R) \sim \text{id}_M.\]  Given $\pi: \FR{A} \rightarrow M$, such an isomorphism gives $\sigma_M\circ (\pi \otimes 1_R) \in [\pi]$.  So we can always find a representative whose relative commutant unitally contains a copy of $R$.  As we will see from Definition \ref{convexcombo}, the operation of taking a convex combination of $[\pi_1]$ and $[\pi_2]$ in $\HOM_w(\FR{A},M)$ is obtained by slicing each representative $\sigma_M(\pi_1 \otimes 1_R)$ and $\sigma_M(\pi_2\otimes 1_R)$ by complementary projections of the form $\sigma_M(1_M \otimes p_1)$ and $\sigma_M(1_M \otimes p_2)$ respectively, with both projections contained in both relative commutants.  In this way, the structure of a McDuff factor always provides us with representatives whose relative commutants contain the same copy of $R$ and thus have an interval's worth of projections in common.  Also, allowing any McDuff factor as a target algebra maintains enough generality so that technical embeddability obstructions do not arise.  In fact, requiring a McDuff target is not so much of an obstruction. Thanks to N. Ozawa, we have Theorem \ref{nonmcd} which says that for $\emph{any}$ separable II$_1$-factor $N$ we may consider $\HOM_w(\FR{A},N)$ within this convex context by stabilizing the target algebra to obtain a homeomorphic embedding of $\HOM_w(\FR{A},N)$ as a closed set inside the convex $\HOM_w(\FR{A},N\otimes R)$.

As mentioned above, $\HOM_w(\FR{A},M)$ is a variation of the object of study \linebreak $\HOM(N,R^\mathcal{U})$  in N. Brown's paper \cite{topdyn}. Here $\HOM(N,R^\mathcal{U})$ denotes the space of $*$-homomorphisms $N\rightarrow R^\mathcal{U}$ from a separable II$_1$-factor $N$ to the ultrapower of $R$ modulo unitary equivalence.  The convex combinations on $\HOM(N,R^\mathcal{U})$ can be viewed as an ultrapower version of the definition for $\HOM_w(\FR{A},M)$ (see \S\ref{ultrasit}).  A major distinction between these two objects is that $\HOM_w(\FR{A},M)$ is always separable (Proposition \ref{separable}) whereas $\HOM(N,R^\mathcal{U})$ is either nonseparable or trivial.  So by studying $\HOM_w(\FR{A},M)$, we have the advantage of studying a separable object.




The space $\HOM_w(\FR{A},M)$ has a connection with the trace space of $\FR{A}$. Let $T(\FR{A})$ denote the trace space of $\FR{A}$; recall 
\begin{equation}\label{tracedef}
T(\FR{A}) := \left\{T \in \FR{A}^* | T(1_\FR{A}) = 1, T(a^*a) = T(aa^*)\geq 0 \text{  } \forall a \in \FR{A}\right\}.
\end{equation} 
Let $[\pi] \in \HOM_w(\FR{A},M)$ be the equivalence class of $\pi: \FR{A} \rightarrow M$ and $\tau_M$  denote the unique faithful tracial state on $M$. There is a well-defined, affine map given by 
\begin{align*}
[\pi] & \mapsto  \tau_M \circ \pi 
\end{align*} 
(see Definition \ref{tracecorr}).   We can think of $\tau_M \circ \pi$ as being a trace in $T(\FR{A})$ that \textquotedblleft lifts through $M$,\textquotedblright and injectivity of this map means \textquotedblleft liftable traces remember their homomorphisms (up to weak approximate unitary equivalence).\textquotedblright ~ The following theorem shows that in the nuclear case, this map is very well-behaved.
\begin{repthm}{nuctrace}
If $\FR{A}$ is nuclear, then for any McDuff $M$, $\HOM_w(\FR{A},M)$ is affinely homeomorphic to $T(\FR{A})$ via $[\pi]\mapsto \tau_M\circ \pi$.
\end{repthm}
\noindent In English, this theorem says: for a nuclear $\FR{A}$, all traces on $\FR{A}$ lift through $M$ and remember their homomorphisms.  So in this case $\HOM_w(\FR{A},M)$ serves as a different perspective from which we may study the trace space of $\FR{A}$; and on the other side of the coin, $T(\FR{A})$ gives insight into understanding $\HOM_w(\FR{A},M)$ in general.  We notice how this compares with Ext$(\FR{A})$ -- when $\FR{A}$ is nuclear, we get that Ext$(\FR{A})$ is a group (see \cite{arveson}). Some nontrivial work had to be done to show that there are algebras $\FR{A}$ for which Ext$(\FR{A})$ is not a group (see \cite{anderson} and \cite{haagthor}).  So as in the program of Ext$(\FR{A})$, it is natural to ask: is $\HOM_w(\FR{A},M)$ always the same as $T(\FR{A})$?  We present several examples in \S\S\ref{examples} offering various negative answers (an algebra with forgetful traces, and an algebra with so many traces that they cannot all lift through one $M$).  These examples are encouraging in that they show that the collection of $\left\{\HOM_w(\FR{A},M)\right\}_M$ as $M$ varies over the McDuff factors contains information different from $T(\FR{A})$.  Notice that we would not get much information if we examined this connection in the context of $\HOM(N,R^\mathcal{U})$ because any II$_1$-factor $N$ has a unique tracial state.  Thus, for every $[\pi]  \in \HOM(N,R^\mathcal{U})$ we get $\tau_{R^\mathcal{U}} \circ \pi = \tau_N$.   

We go further to show that the class of algebras $\FR{A}$ for which $\HOM_w(\FR{A},M)$ is affinely homeomorphic to $T(\FR{A})$ is precisely the class of algebras $\FR{A}$ for which given any $T \in T(\FR{A})$, the weak closure of the GNS representation induced by $T$ is hyperfinite -- a class strictly larger than nuclear algebras.  This leads us to a characterization of hyperfiniteness stated in our context of weak approximate unitary equivalence in McDuff factors: a separable, tracial, $R^\mathcal{U}$-embeddable von Neumann algebra $N$ is hyperfinite  if and only if for every separable McDuff II$_1$-factor $M$, any two embeddings $\pi,\rho: N \rightarrow M$ are weakly approximately unitarily equivalent.


We turn to consider the convex analysis of $\HOM_w(\FR{A},M)$.  In Proposition 5.2 of \cite{topdyn}, Brown showed that given $[\pi] \in \HOM(N,R^\mathcal{U})$, $[\pi]$ is extreme if and only if $\pi(N)'\cap R^\mathcal{U}$ is a factor. We would like to adapt this characterization to our separable situation.  The analogous statement is not available in our context because the relative commutant of the image of a $*$-homomorphism is not in general well-defined under weak approximate unitary equivalence.  We have the following necessary condition for $[\pi]$ to be extreme.
\begin{repthm}{extfact}
If $[\pi] \in \HOM_w(\FR{A},M)$ is extreme then $W^*(\pi(\FR{A}))$ is a factor.
\end{repthm}
\noindent The converse of the above theorem holds when $\FR{A}$ is nuclear by Theorem \ref{nuctrace}.  Also, when $M = R$, the converse holds for general $\FR{A}$. However, the converse of this theorem fails in general.  

Since our domains are unital separable $C^*$-algebras, we have access to nontrivial ideals. Using the contravariance in the first argument, we show that for a closed two-sided ideal $J$ of $\FR{A}$, $\HOM_w(\FR{A}/J,M)$ is a face of $\HOM_w(\FR{A},M)$.  A statement like this is meaningless in the setting of $\HOM(N,R^\mathcal{U})$ because II$_1$-factors are simple. The observation that any unital separable $C^*$-algebra is a quotient of $C^*(\mathbb{F}_\infty)$ translates into the following surprising fact.
\begin{repthm}{uniface}
For any unital separable $C^*$-algebra $\FR{A}$, $\HOM_w(\FR{A},M)$ is a face of $\HOM_w(C^*(\mathbb{F}_\infty),M)$.
\end{repthm}


We also discuss ultrapowers in considering $\HOM(\FR{A},M^\mathcal{U})$: the space of all unital $*$-homomorphisms $\FR{A}\rightarrow M^\mathcal{U}$ modulo unitary equivalence.  This is an obvious generalization of $\HOM(N,R^\mathcal{U})$ and also supports a convex structure.    We extend Brown's characterization of extreme points to apply to $\HOM(\FR{A},M^\mathcal{U})$: $[\pi] \in \HOM(\FR{A},M^\mathcal{U})$ is extreme if and only if $\pi(\FR{A})'\cap M^\mathcal{U}$ is a factor.  We observe that $\HOM_w(\FR{A},M)$ may be embedded into $\HOM(\FR{A},M^\mathcal{U})$ via $[\pi] \mapsto [\pi^\mathcal{U}]$ where $\pi^\mathcal{U}$ denotes $\pi$ followed by the canonical constant-sequence embedding of $M$ into $M^\mathcal{U}$.  This is a strict inclusion in general, but we observe that $\HOM_w(\FR{A},M) \cong \HOM(\FR{A},M^\mathcal{U})$ in the nuclear case.  This embedding yields the following sufficient condition for extreme points.
\begin{repthm}{comext}
If $\pi^\mathcal{U}(\FR{A})'\cap M^\mathcal{U}$ is a factor, then $[\pi]$ is extreme in $\HOM_w(\FR{A},M)$.
\end{repthm}
\noindent The converse holds in the case when either $\FR{A}$ or $M$ is amenable. It is unknown if the converse of Theorem \ref{comext} holds in general; it would hold if one could show that $\HOM_w(\FR{A},M)$ embeds as a face of $\HOM(\FR{A},M^\mathcal{U})$.  As a consequence of the characterizations of extreme points in the amenable cases, we get an equivalence of two purely algebraic statements with no reference to $\HOM_w(\FR{A},M)$ (see Corollary \ref{nucchar} and Theorem \ref{hyperchar}).  This discussion of relative commutants in ultrapowers along with a helpful comment made by S. White leads us to the following new characterization of the hyperfinite II$_1$-factor.
\begin{repthm}{Rchar}
Let $N$ be an embeddable separable II$_1$-factor.  The following are equivalent:
\begin{enumerate}
\item $N=R$;
\item For any separable II$_1$-factor $X$ and any embedding $\pi: N \rightarrow X^\mathcal{U}$, $\pi(N)'\cap X^\mathcal{U}$ is a factor;
\item For any separable II$_1$-factor $X$ and any embedding $\pi: N\rightarrow X^\mathcal{U}$, the collection of tracial states $\left\{\tau(\pi(x)\cdot): x \in N^+, \tau(x) = 1\right\}$  is weak-$*$ dense in the trace space of $\pi(N)'\cap X^\mathcal{U}$.
\end{enumerate}
\end{repthm} 
\noindent Notice that this is a strengthening of Corollary 5.3 in \cite{topdyn}.

Though we will not be working in the following context, we mention here that in \cite{brocap}, for $N$ a separable II$_1$-factor, Brown and Capraro constructed a real Banach space that naturally contains $\HOM(N, M^\mathcal{U})$.  This space is constructed by applying the Grothendieck construction to a cancellative semigroup structure on the space of $*$-homomorphisms of $N$ into amplifications of $M^\mathcal{U}$.  This Banach space and its dynamics are interesting on their own, but examining such things is not within the scope of the present paper.

This paper is organized as follows.  \textbf{\S\ref{preliminaries} Preliminaries:} We provide all of the initial definitions for $\HOM_w(\FR{A},M)$.  The convex structure of $\HOM_w(\FR{A},M)$ is introduced and verified.  We briefly discuss some surface-level functoriality. \textbf{\S\ref{tracespace} Connection to the Trace Space:}  Here we introduce the relationship between $\HOM_w(\FR{A},M)$ and $T(\FR{A})$ as mentioned above.  We establish the fact that the relationship is a bijection when $\FR{A}$ is nuclear.  We also show that traces remember their homomorphisms when $M = R$.  We then discuss several examples including the \tql forgetful trace\tqr and \tql too many traces\tqr examples mentioned above. \textbf{\S\ref{convexanalysis} Convex Analysis:} We take a closer look at some of the convex analysis of $\HOM_w(\FR{A},M)$.  Theorems \ref{extfact} and \ref{uniface} are the main results of this section. We provide an example showing that the converse of Theorem \ref{extfact} is false in general. \textbf{\S\ref{ultrasit} Ultrapower Situation:} We generalize $\HOM(N,R^\mathcal{U})$ by considering the space $\HOM(\FR{A},M^\mathcal{U})$.  We extend the characterization of extreme points in $\HOM(N,R^\mathcal{U})$ to a characterization in $\HOM(\FR{A},M^\mathcal{U})$.  We prove Theorem \ref{Rchar}.  The embedding $\HOM_w(\FR{A},M) \subset \HOM(\FR{A},M^\mathcal{U})$ is discussed.  This yields a sufficient condition for extreme points in $\HOM_w(\FR{A},M)$ and characterizations in the cases where either $\FR{A}$ is nuclear or $M$ is hyperfinite.  \textbf{\S\ref{stabilization} Stabilization:}  
In this section we address the stabilization of non-McDuff target algebras.  We show in Theorem \ref{nonmcd} that for any separable II$_1$-factors $N_1$ and $N_2$, $\HOM_w(\FR{A},N_1)$ embeds homeomorphically into $\HOM_w(\FR{A}, N_1\otimes N_2)$ as a closed subset.  So in particular, $\HOM_w(\FR{A},N) \subset \HOM_w(\FR{A},N \otimes R)$.
\textbf{\S\ref{oq} Open Questions:} We conclude the paper with some related open problems.

\subsection*{Acknowledgements}  The author would like to express his gratitude to his Ph.D. supervisor David Sherman for his guidance.  The author thanks Taka Ozawa for suggesting the proof of Theorem \ref{presineq} and Stuart White for suggesting the proof of Theorem \ref{stuart}. The author would also like to thank Nate Brown and Jesse Peterson for their helpful comments and suggestions regarding this project.  Lastly, the author thanks the many mathematicians not listed here who provided helpful feedback and encouragement.


\section{Preliminaries}\label{preliminaries}

Unless otherwise noted, $\FR{A}$ will denote a separable unital $C^*$-algebra, $N$ will denote a separable II$_1$-factor, $M$ will denote a separable McDuff II$_1$-factor (see \cite{mcduff}), and $R$ will denote the separable hyperfinite II$_1$-factor.



We consider the topology of pointwise convergence (of equivalence classes) for $\HOM_w(\mathfrak{A},N)$.  That is, for $[\pi_n],[\pi] \in \HOM_w(\FR{A},N),$ $ [\pi_n] \rightarrow [\pi]$ if there are representatives $\pi_n' \in [\pi_n]$ such that $\pi_n'(a) \rightarrow \pi(a)$ under the $||\cdot||_2$-norm for every $a \in \FR{A}$.  This topology can be metrized in the following way.

\begin{dfn}\label{metric}
For $[\pi], [\rho] \in \HOM_w(\mathfrak{A}, N)$, let $\left\{a_n\right\}$ be a countable generating set in $\mathfrak{A}_{\leq1}$ and define the metric (same as in Definition 1.2 of \cite{topdyn}) \[d([\pi],[\rho]) = \inf_{u \in \mathcal{U}(N)} \Big(\sum_{n=1}^{\infty} \frac{1}{2^{2n}} || \pi(a_n) - u \rho(a_n)u^*||^2_2\Big)^{\frac{1}{2}}.\]
\end{dfn}

\noindent This is quickly seen to be a metric that induces the topology described above. We note that the objects of study in \cite{topdyn} are typically not second countable with respect to the corresponding metric, but in our situation we have the following fact.

\begin{prop}\label{separable}
$\HOM_w(\FR{A},N)$ is complete and separable under the metric $d$.
\end{prop}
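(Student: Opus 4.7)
The plan is to work upstairs with the space $\mathrm{Hom}(\FR{A},N)$ of unital $*$-homomorphisms $\FR{A}\to N$, topologized by pointwise $\|\cdot\|_2$-convergence; the pseudometric obtained by dropping the infimum over unitaries in Definition \ref{metric} induces exactly this topology, and both separability and completeness descend through the quotient map $\pi\mapsto[\pi]$. For separability, observe that $N$ is norm-separable and $\|\cdot\|_2\leq\|\cdot\|$, so $N$ is $\|\cdot\|_2$-separable, and hence so is the countable product $N^{\mathbb{N}}$ with its product metric. Evaluation at the generating set, $\pi\mapsto(\pi(a_n))_n$, is a continuous injection of $\mathrm{Hom}(\FR{A},N)$ into $N^{\mathbb{N}}$ which is in fact a topological embedding (because $\{a_n\}$ generates $\FR{A}$ as a $C^*$-algebra and each $\pi$ is operator-norm contractive); this gives separability upstairs, and then it passes to the quotient.

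For completeness, given a $d$-Cauchy sequence $\{[\pi_k]\}$, I would pass to a subsequence with $d([\pi_k],[\pi_{k+1}])<2^{-k}$ and inductively choose $u_k\in\mathcal{U}(N)$ so that, setting $\tilde\pi_1:=\pi_1$ and $\tilde\pi_{k+1}:=\mathrm{Ad}(u_k)\circ\pi_{k+1}$, one obtains representatives for which
\[
\sum_{n=1}^\infty\frac{1}{2^{2n}}\|\tilde\pi_k(a_n)-\tilde\pi_{k+1}(a_n)\|_2^2 < 2^{-2k}
\]
for every $k$. Consequently, for each fixed $n$ the sequence $\{\tilde\pi_k(a_n)\}_k$ is $\|\cdot\|_2$-Cauchy; a priori the $\|\cdot\|_2$-limit $\xi_n$ lies only in $L^2(N)$, but the uniform operator-norm bound $\|\tilde\pi_k(a_n)\|\leq\|a_n\|$ together with SOT-closedness of the operator-norm unit ball (and the coincidence of SOT with the $\|\cdot\|_2$-topology on norm-bounded sets) forces $\xi_n\in N$.

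The main obstacle, and the only nonroutine step, is verifying that $a_n\mapsto\xi_n$ extends to a genuine $*$-homomorphism $\pi:\FR{A}\to N$: multiplication is not jointly $\|\cdot\|_2$-continuous on all of $N$, only on operator-norm bounded sets. Since the $\tilde\pi_k$ are uniformly contractive, the $*$-algebraic relations do pass to the limit on the $*$-subalgebra $\FR{A}_0$ generated by $\{a_n\}$, yielding a $*$-homomorphism $\pi:\FR{A}_0\to N$ that remains norm-contractive by SOT-closedness of the unit ball. Norm-density of $\FR{A}_0$ in $\FR{A}$ then extends $\pi$ uniquely to all of $\FR{A}$, and by construction $d([\tilde\pi_k],[\pi])\to 0$, so the original Cauchy sequence converges to $[\pi]$.
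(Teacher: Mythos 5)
Your proof is correct. For separability the argument is in the same spirit as the paper's but organized differently: the paper works directly in the quotient space $(N_{\leq 1})^{\mathbb{N}}/\sim$ and exhibits a countable dense set by hand (truncating sequences and replacing initial terms with a fixed $\|\cdot\|_2$-dense sequence in $N_{\leq 1}$), whereas you work upstairs in $\mathrm{Hom}(\FR{A},N)$, embed it topologically in $N^{\mathbb{N}}$, invoke separability of a countable product of a separable metric space and the hereditarity of separability for metric subspaces, and then push density down through the $1$-Lipschitz quotient map $\pi\mapsto[\pi]$. Both rest on the same fact (norm-separability of $N$ implies $\|\cdot\|_2$-separability of $N_{\leq 1}$, hence separability of the sequence space), but your route is shorter and avoids the ad hoc dense set. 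For completeness the paper simply cites Proposition 4.6 of \cite{topdyn}; your argument is the standard one and is self-contained: telescope a fast Cauchy subsequence, conjugate representatives into alignment, obtain pointwise $\|\cdot\|_2$-limits that lie in $N$ by the SOT-closedness of norm balls, and recover a $*$-homomorphism by pushing the algebraic relations to the limit on the dense $*$-subalgebra $\FR{A}_0$ using joint $\|\cdot\|_2$-continuity of multiplication on operator-norm-bounded sets, then extend by contractivity. One small point worth making explicit: the final step $d([\tilde\pi_k],[\pi])\to 0$ needs a dominated-convergence observation, since the metric sums over all $n$; this is immediate because $\|\tilde\pi_k(a_n)-\pi(a_n)\|_2\leq 2$ uniformly, but it should be said. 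Also, when you write $N^{\mathbb{N}}$ with \textquotedblleft its product metric,\textquotedblright you really mean the $\|\cdot\|_2$-topology restricted to $(N_{\leq 1})^{\mathbb{N}}$ (or any metrization of the product topology); separability is topological so this is harmless, but the $2^{-2n}$-weighted $\ell^2$ expression in Definition \ref{metric} is only a metric on bounded sequences.
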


\begin{proof}

Completeness follows from an argument identical to one found in the proof of Proposition 4.6 of \cite{topdyn}.

For separability under $d$, let $(N_{\leq 1})^{\mathbb{N}}/\sim$ denote the set of all sequences in the unit ball of $N$ modulo the equivalence relation given by $\left\{x_n\right\} \sim \left\{y_n\right\}$ if there is a sequence of unitaries $\left\{u_p\right\} \subset \mathcal{U}(N)$ such that for every $n$ we have $x_n = \lim_p u_py_n u_p^*$ where the limit is taken in the $||\cdot||_2$-norm. Let $[\left\{x_n\right\}]$ denote the equivalence class of $\left\{x_n\right\}$ under this equivalence relation. Consider a metric $d'$ on $(N_{\leq 1})^{\mathbb{N}}/\sim$ given by \[d'([\left\{x_n\right\}],[\left\{y_n\right\}]) = \inf_{u \in \mathcal{U}(N)} \Big(\sum_{n=1}^{\infty} \frac{1}{2^{2n}} || x_n - u y_nu^*||^2_2\Big)^{\frac{1}{2}}.\]

We claim that $(N_{\leq 1})^{\mathbb{N}}/\sim$ is separable under $d'$. 

Let $\left\{m_n\right\}$ be $||\cdot||_2$-dense in $N_{\leq1}$.  Fix $\varepsilon > 0$ and $[\left\{x_n\right\}] \in (N_{\leq 1})^{\mathbb{N}}/\sim$.  Let $K \in \mathbb{N}$ be such that \[\displaystyle 4\sum_{n= K+1}^{\infty} \frac{1}{2^{2n}} < \frac{\varepsilon^2}{2},\]  and let $f: \left\{1, \dots, K\right\} \rightarrow \mathbb{N}$ be such that \[\frac{1}{2^{2n}}||x_n - m_{f(n)}||_2^2 < \frac{\varepsilon^2}{2K}, \forall 1 \leq n \leq K.\] For such a $K$ and $f$, put $\left\{z_{K,f,n}\right\} \subset N_{\leq 1}$ with \[z_{K,f,n}= \left\{\begin{array}{lcr}
 m_{f(n)} & \text{if} & 1 \leq n \leq K \\ 
m_{n+K'} & \text{if} & n>K 
\end{array}\right.\] where $K' = \max\left\{f(n) : 1\leq n \leq K\right\}$.

Then we have
\begin{align*}
d'([\left\{x_n\right\}],[\left\{z_{K,f,n}\right\}])^2 & \leq  \sum_{n=1}^{\infty} \frac{1}{2^{2n}}||x_n - z_{K,f,n}||_2^2\\
 & =  \sum_{n=1}^K \frac{1}{2^{2n}}||x_n - z_{K,f,n}||_2^2+\sum_{n=K+1}^{\infty}\frac{1}{2^{2n}}||x_n - z_{K,f,n}||_2^2\\
 & <  K\cdot\frac{\varepsilon^2}{2K} + \frac{\varepsilon^2}{2} = \varepsilon^2.
\end{align*}

Thus $d'([\left\{x_n\right\}],[\left\{z_{K,f,n}\right\}]) < \varepsilon$.

So \[ \left\{ [\left\{z_{K,f,n}\right\}] | K \in \mathbb{N}, f: \left\{1,\dots, K\right\} \rightarrow \mathbb{N}\right\} = \bigcup_{K=1}^{\infty} \bigcup_{f:\left\{1,\dots, K\right\} \rightarrow \mathbb{N}} \left\{[\left\{z_{K,f,n}\right\}]\right\}\] is dense and countable.  Thus $(N_{\leq 1})^{\mathbb{N}}/\sim$ is separable under the metric $d'$.

By fixing a generating sequence $\left\{a_n\right\}$ in $\FR{A}_{\leq 1}$, we get the metric $d$ on $\HOM_w(\FR{A},N)$ as defined above.  We can consider the metric space  $(\HOM_w(\FR{A},N),d)$ as a subspace of the metric space $((N_{\leq 1})^{\mathbb{N}}/\sim,d')$ by identifying $[\pi] \in \HOM_w(\FR{A},N)$ with $[\left\{\pi(a_n)\right\}] \in (N_{\leq 1})^{\mathbb{N}}/\sim$.  Since subspaces of separable metric spaces are separable, the proof is complete.\end{proof}

\begin{rmk}\label{metricgenerators}
This metric is not canonical--it depends on the choice of the generating sequence.  It will sometimes be useful to choose our generating sequence to be a sequence of unitaries (always possible in a unital $C^*$-algebra)--see Theorem \ref{nonmcd}.
\end{rmk}

\noindent We will see later in Example \ref{surjfail} that $\HOM_w(\FR{A},M)$ is not necessarily compact.

\subsection{Convex Structure} We now turn to define a convex structure on \linebreak $\HOM_w(\FR{A},M)$ for $M$ McDuff.  In \cite{topdyn} Brown uses certain isomorphisms between corner algebras $pR^{\mathcal{U}}p$ and $R^{\mathcal{U}}$ to define a convex structure.  We take a slightly different approach in order to define convex combinations in $\HOM_w(\FR{A},M)$.  We must first introduce some terminology.

\begin{dfn}\label{regiso}
For a McDuff II$_1$-factor, a \emph{regular isomorphism} $\sigma: M \otimes R \rightarrow M$ is an isomorphism such that $\sigma\circ (\text{id}_M \otimes 1_R) \sim \text{id}_M$ where $(\text{id}_M\otimes 1_R)(x) = x \otimes 1_R$ for $x \in M$.  Denote the set of regular isomorphisms of $M$ as $\text{REG}(M)$. 
\end{dfn}

\begin{prop}\label{pitensor1}
Let $M$ be a McDuff II$_1$-factor.
\begin{enumerate}

\item REG$(M) \neq \emptyset$.

\item Any two regular isomorphisms $\sigma_M,s_M: M \otimes R \rightarrow M$ are weakly approximately unitarily equivalent.

\item \label{regaut} The following are equivalent.
	\begin{enumerate}
		\item For every isomorphism $\nu: M\otimes R \rightarrow M, \nu \in \text{REG}(M);$

		\item $\overline{\text{Inn}(M)} = \text{Aut}(M)$. (The closure is in the point-$||\cdot||_2$ topology).

	\end{enumerate}

\end{enumerate}
\end{prop}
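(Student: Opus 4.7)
The plan is to handle the three parts in sequence: (1) is an application of McDuff's theorem in its quantitative form, while (2) and (3) reduce to manipulations with weak approximate unitary equivalence using that $\sim$ is preserved by post-composition with an isomorphism and by pre-composition with any $*$-homomorphism. For (1), I would appeal to McDuff's characterization: since $M$ is McDuff, for every finite $F \subset M$ and every $\varepsilon > 0$ there is a unital embedding $\iota : R \hookrightarrow M$ whose image is $(F,\varepsilon)$-central, i.e.\ $||[\iota(r),a]||_2 < \varepsilon||r||$ for all $a \in F$ and $r \in R$. Splicing such asymptotically central embeddings via the standard ultrapower/back-and-forth construction produces an isomorphism $\sigma : M \otimes R \to M$ whose restriction to $M \otimes 1_R$ is by construction weakly approximately unitarily equivalent to $\text{id}_M$, so $\sigma \in \text{REG}(M)$.

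For (2), let $\sigma, s \in \text{REG}(M)$ and set $\beta := \sigma \circ s^{-1} \in \text{Aut}(M)$. Since $s$ is a surjective isomorphism, substituting $y = s(x)$ identifies the condition $\sigma \sim s$ with $\beta \sim \text{id}_M$: for $u_n \in \mathcal{U}(M)$ one has $||\sigma(x) - u_n s(x) u_n^*||_2 = ||\beta(y) - u_n y u_n^*||_2$. To verify $\beta \sim \text{id}_M$, set $\phi := s \circ (\text{id}_M \otimes 1_R) : M \to M$. Regularity of $s$ gives $\phi \sim \text{id}_M$, and regularity of $\sigma$ gives $\beta \circ \phi = \sigma \circ (\text{id}_M \otimes 1_R) \sim \text{id}_M$. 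Post-composing $\phi \sim \text{id}_M$ with the automorphism $\beta$ (using that $\beta$ is a $||\cdot||_2$-isometry, so the implementing unitaries $\beta(u_n)$ still lie in $\mathcal{U}(M)$) yields $\beta \circ \phi \sim \beta$, and transitivity gives $\beta \sim \text{id}_M$.

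For (3), assume first (a). Given $\theta \in \text{Aut}(M)$ and $\sigma_0 \in \text{REG}(M)$ from (1), the isomorphism $\theta \circ \sigma_0 : M \otimes R \to M$ is regular by hypothesis. Setting $\phi := \sigma_0 \circ (\text{id}_M \otimes 1_R) \sim \text{id}_M$, the two regularities give $\theta \circ \phi \sim \text{id}_M$, while post-composition invariance gives $\theta \circ \phi \sim \theta$; hence $\theta \sim \text{id}_M$, i.e.\ $\theta \in \overline{\text{Inn}(M)}$. Conversely, assume (b). For any isomorphism $\nu : M \otimes R \to M$, the automorphism $\theta := \nu \circ \sigma_0^{-1} \in \overline{\text{Inn}(M)}$ satisfies $\theta \sim \text{id}_M$, so the computation in (2) gives $\nu = \theta \circ \sigma_0 \sim \sigma_0$. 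Pre-composition with $\text{id}_M \otimes 1_R$ preserves $\sim$, so $\nu \circ (\text{id}_M \otimes 1_R) \sim \sigma_0 \circ (\text{id}_M \otimes 1_R) \sim \text{id}_M$, i.e.\ $\nu \in \text{REG}(M)$.

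The main obstacle is (1): the content is not the abstract statement $M \cong M \otimes R$ but the quantitative form guaranteeing that the isomorphism absorbs the added copy of $R$ so as to be wau-close to the identity on $M \otimes 1_R$, which is the asymptotically-central-embedding content of McDuff's theorem. Parts (2) and (3) are then essentially bookkeeping that trades conditions on $\sigma$, $s$, and $\nu$ for conditions on the automorphism group via the preservation properties of $\sim$ under composition.
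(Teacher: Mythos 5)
Your proofs of parts (2) and (3) are correct and essentially the same as the paper's, just with the bookkeeping written out (the paper compresses (2) to ``a straightforward exercise''). For part (3), both directions match the paper's argument up to notation: you factor a general isomorphism $\nu$ as $\theta \circ \sigma_0$ and the paper as $\nu_\alpha = \alpha\circ\nu$, but the underlying manipulation with $\sim$ and post/pre-composition is identical.

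For part (1), you take a genuinely different and heavier route. You invoke the quantitative form of McDuff's theorem (asymptotically central embeddings of $R$) and then sketch a back-and-forth/intertwining construction, asserting ``by construction'' that the resulting isomorphism $\sigma: M\otimes R \to M$ restricts to something wau-equivalent to $\mathrm{id}_M$. That assertion is plausible for the standard intertwining construction, but you leave it unverified, and it is exactly the hard content of the claim; as written it is a gap. The paper avoids re-deriving this from scratch by a slicker algebraic trick: start with \emph{any} isomorphisms $\nu: M\otimes R \to M$ and $\epsilon: R\otimes R\to R$ (which exist abstractly since $M$ and $R$ are McDuff), set
\[
\sigma_M := \nu\circ(\mathrm{id}_M\otimes\epsilon)\circ(\nu^{-1}\otimes\mathrm{id}_R),
\]
and then compute directly that $\sigma_M\circ(\mathrm{id}_M\otimes 1_R) = \nu\circ\bigl(\mathrm{id}_M\otimes(\epsilon\circ(\mathrm{id}_R\otimes 1_R))\bigr)\circ\nu^{-1}$. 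Regularity then reduces to the single classical fact that the unital endomorphism $\epsilon\circ(\mathrm{id}_R\otimes 1_R)$ of $R$ is approximately inner, so $\sigma_M\circ(\mathrm{id}_M\otimes 1_R)\sim\nu\circ\nu^{-1}=\mathrm{id}_M$. This buys a short, fully explicit verification and keeps the proof self-contained modulo a well-known lemma about $R$, whereas your route requires carrying through the full intertwining argument and checking that the resulting isomorphism has the regularity property. I would recommend adopting the paper's formula, or at minimum filling in the ``by construction'' step with the actual intertwining estimate.
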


\begin{proof}
\noindent (1): We will construct an isomorphism $\sigma_M: M\otimes R \rightarrow M$ such that $\text{id}_M \sim \sigma_M(\text{id}_M \otimes 1_R)$.  Let $\nu: M\otimes R \rightarrow M$ and $\epsilon: R\otimes R \rightarrow R$ be isomorphisms. It is a well-known fact that any unital endomorphism of $R$ is approximately inner.  We apply this fact to the map $\epsilon \circ (\text{id}_R \otimes 1_R):R \rightarrow R$ getting that $\epsilon \circ(\text{id}_R \otimes 1_R) \sim \text{id}_R$.  Let \[\sigma_M := \nu \circ (\text{id}_M \otimes \epsilon) \circ (\nu^{-1} \otimes \text{id}_R)\] and consider
\begin{align*}
\sigma_M(\text{id}_M \otimes 1_R) &= \nu \circ (\text{id}_M \otimes \epsilon) \circ (\nu^{-1} \otimes \text{id}_R) \circ (\text{id}_M \otimes 1_R)\\
& =  \nu \circ (\text{id}_M \otimes \epsilon) \circ (\nu^{-1} \otimes 1_R)\\
& =  \nu \circ (\text{id}_M \otimes \epsilon) \circ (\text{id}_M \otimes \text{id}_R \otimes \text{id}_R) \circ (\nu^{-1} \otimes 1_R)\\
& =  \nu \circ (\text{id}_M \otimes \epsilon) \circ (\text{id}_M \otimes \text{id}_R \otimes 1_R) \circ \nu^{-1}\\
& =  \nu \circ (\text{id}_M \otimes (\epsilon \circ (\text{id}_R \otimes 1_R))) \circ \nu^{-1}\\
& \sim  \nu \circ (\text{id}_M \otimes \text{id}_R) \circ \nu^{-1}\\
& =  \text{id}_M.
\end{align*}

\noindent (2): From Definition \ref{regiso} we have that $\sigma_M^{-1} \sim \text{id}_M \otimes 1_R \sim s_M^{-1}$.  Then it is a straight-forward exercise to see that this implies that $\sigma_M \sim s_M$.

\noindent (3): ($a\Rightarrow b$): Let $\alpha \in \text{Aut}(M)$, and let $\nu: M\otimes R \rightarrow M$ be an isomorphism.  Define $\nu_{\alpha} := \alpha \circ \nu$.  By assumption and part (2), $\nu \sim \nu_{\alpha}$, or equivalently, $\nu^{-1} \sim \nu_{\alpha}^{-1}$.  So we get 
\begin{align*}
\alpha & =  \alpha \circ \nu \circ \nu^{-1}\\
& =  \nu_{\alpha} \circ \nu^{-1}\\
& \sim  \nu_{\alpha} \circ \nu_{\alpha}^{-1}\\
& =  \text{id}_M.
\end{align*}

\noindent ($b\Rightarrow a$): Let $\nu: M\otimes R \rightarrow M$ be an isomorphism, and let $\sigma \in \text{REG}(M)$.  Then $\nu \circ \sigma^{-1} \in\text{Aut}(M)$.  Thus 
\begin{align*}
\nu\circ \sigma^{-1} \sim \text{id}_M &\Rightarrow \nu^{-1} \sim \sigma^{-1}\\
&\Rightarrow \nu^{-1} \sim \text{id}_M \otimes 1_R\\
& \Rightarrow \nu \in \text{REG}(M).\qedhere
\end{align*}

\end{proof}

\begin{rmk}\label{sakairmk} In Theorem 8 of \cite{sakai1}, Sakai gave an example of a McDuff factor ($\displaystyle \otimes_{\mathbb{Z}} L(\mathbb{F}_2)$) that fails condition (b) in statement (3) of Proposition \ref{pitensor1}. So it is nontrivial for us to restrict to regular isomorphisms in this paper.
\end{rmk}

For $\pi: \FR{A} \rightarrow M$ and $p$ a projection in $R$, let $\pi \otimes p: \FR{A} \rightarrow M \otimes R$ be given by $(\pi \otimes p)(a) = \pi(a) \otimes p$. We now define convex combinations in $\HOM_w(\FR{A},M)$.

\begin{dfn}\label{convexcombo}
Given $[\pi],[\rho] \in \HOM_w(\FR{A},M)$ and $t \in [0,1]$ we define 
\begin{align}
t[\pi] + (1-t)[\rho] &:= [\sigma_M(\pi \otimes p) + \sigma_M(\rho \otimes p^{\perp})]\label{convexdef}
\end{align} where $p \in \mathcal{P}(R)$ with $\tau(p) = t$ and $\sigma_M: M\otimes R \rightarrow M$ is a regular isomorphism.
\end{dfn}

\noindent Compare Definition \ref{convexcombo} with the discussion in Example 4.5 of \cite{topdyn}.  Clearly, this definition extends to taking convex combinations of $n$ equivalence classes. The following picture is helpful in visualizing this operation.

\begin{align*}
t[\pi] + (1-t)[\rho] \mapsto&
\left[\sigma_M\left( \begin{array}{c|c} 
\pi \otimes p & 0 \\
\hline
0 & \rho \otimes p^{\perp}  
\end{array}\right)\right]
\end{align*}
where the $2\times 2$ matrix corresponds to the decomposition via $1_M \otimes p$ and $1_M \otimes p^\perp$.

\begin{prop}\label{welldefined}
The formula \eqref{convexdef} is well-defined.  That is, for $\sigma_M$ and $s_M$ regular isomorphisms, $p, q \in \mathcal{P}(R)$ with $\tau(p) = \tau(q) = t$, and $[\pi_1] = [\pi_2], [\rho_1] = [\rho_2]$, then \[ [\sigma_M(\pi_1 \otimes p) + \sigma_M(\rho_1 \otimes p^{\perp})] = [s_M(\pi_2 \otimes q) + s_M(\rho_2 \otimes q^{\perp})].\]
\end{prop}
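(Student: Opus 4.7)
The plan is to reduce well-definedness to three separate independence claims, handled in order: (i) independence of the choice of projection of trace $t$, (ii) independence of the choice of representatives $\pi_i,\rho_i$, and (iii) independence of the choice of regular isomorphism $\sigma_M$. Before starting, observe that $\pi\otimes p+\rho\otimes p^\perp$ genuinely is a $*$-homomorphism $\FR{A}\to M\otimes R$, since orthogonality of $p$ and $p^\perp$ kills every cross term in the product. Throughout, I will use that the unique trace on $M\otimes R$ is $\tau_M\otimes\tau_R$, so that for $x,y\in M$ one has
\[\|x\otimes p+y\otimes p^\perp\|_2^2=t\|x\|_2^2+(1-t)\|y\|_2^2,\]
and that $\sigma_M$, being an isomorphism between II$_1$-factors, is automatically $\|\cdot\|_2$-preserving.

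For (i), pick $v\in\mathcal{U}(R)$ with $vpv^*=q$; then conjugation in $M\otimes R$ by the unitary $1_M\otimes v$ carries $\pi_1\otimes p+\rho_1\otimes p^\perp$ to $\pi_1\otimes q+\rho_1\otimes q^\perp$. Applying $\sigma_M$ translates this into conjugation in $M$ by the unitary $\sigma_M(1_M\otimes v)$, so the two resulting homomorphisms are even unitarily conjugate — a fortiori weakly approximately unitarily equivalent. After this reduction, we may henceforth assume $p=q$.

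For (ii), fix a finite $F\subset\FR{A}_{\leq 1}$ and $\varepsilon>0$. Using $\pi_1\sim\pi_2$ and $\rho_1\sim\rho_2$, choose $u,v\in\mathcal{U}(M)$ with $\|\pi_1(a)-u\pi_2(a)u^*\|_2<\varepsilon$ and $\|\rho_1(a)-v\rho_2(a)v^*\|_2<\varepsilon$ for every $a\in F$. The key trick is the \emph{block-diagonal} unitary
\[w:=u\otimes p+v\otimes p^\perp\in\mathcal{U}(M\otimes R),\]
whose unitarity is a one-line check from $pp^\perp=0$. Then
\[w\bigl(\pi_2(a)\otimes p+\rho_2(a)\otimes p^\perp\bigr)w^*=u\pi_2(a)u^*\otimes p+v\rho_2(a)v^*\otimes p^\perp,\]
and the trace identity above gives the estimate
\[\bigl\|\bigl(\pi_1(a)\otimes p+\rho_1(a)\otimes p^\perp\bigr)-w\bigl(\pi_2(a)\otimes p+\rho_2(a)\otimes p^\perp\bigr)w^*\bigr\|_2<\varepsilon\]
uniformly on $F$. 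Pushing forward by $\sigma_M$ (which preserves $\|\cdot\|_2$ and sends $w$ to a unitary in $M$) yields $\sigma_M(\pi_1\otimes p+\rho_1\otimes p^\perp)\sim\sigma_M(\pi_2\otimes p+\rho_2\otimes p^\perp)$.

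For (iii), apply Proposition \ref{pitensor1}(2): any two regular isomorphisms $\sigma_M,s_M$ are themselves weakly approximately unitarily equivalent as maps $M\otimes R\to M$. Given a finite $F\subset\FR{A}_{\leq 1}$, the finite subset $\{(\pi_2\otimes p+\rho_2\otimes p^\perp)(a):a\in F\}\subset(M\otimes R)_{\leq 1}$ is approximated in $\|\cdot\|_2$ to within any prescribed $\varepsilon$ by a single unitary conjugation of $s_M$ into $\sigma_M$; this is exactly the statement that $\sigma_M(\pi_2\otimes p+\rho_2\otimes p^\perp)\sim s_M(\pi_2\otimes p+\rho_2\otimes p^\perp)$. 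Chaining (i)--(iii) via transitivity of $\sim$ completes the proof.

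The only real obstacle is (ii), and specifically the construction of a \emph{single} unitary $w\in M\otimes R$ that simultaneously implements two potentially unrelated approximate conjugations on the $\pi$- and $\rho$-sides; this is where the McDuff structure is essential, since the orthogonal projections $1_M\otimes p$ and $1_M\otimes p^\perp$ in a common copy of $R\subset M\otimes R$ provide the block decomposition that glues $u$ and $v$ into one unitary. Everything else is routine bookkeeping with $\tau_M\otimes\tau_R$ and the fact that isomorphisms between II$_1$-factors preserve the $2$-norm.
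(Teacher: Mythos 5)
Your proof is correct and follows essentially the same three-step decomposition as the paper's: a block-diagonal unitary $u\otimes p + v\otimes p^\perp$ to handle the representatives, conjugation by a unitary built from partial isometries (equivalently your $v$ with $vpv^*=q$) to handle the projection, and Proposition \ref{pitensor1}(2) to handle the choice of regular isomorphism, chained by transitivity. The only cosmetic difference is that you spell out the $\varepsilon$-estimates using the finite-subset characterization of $\sim$, whereas the paper works directly with the implementing sequences of unitaries.
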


\begin{proof}
By Proposition \ref{pitensor1} (2) we have 
\begin{align}
[\sigma_M(\pi_2 \otimes q) + \sigma_M(\rho_2 \otimes q^{\perp})] &= [s_M(\pi_2 \otimes q) + s_M(\rho_2 \otimes q^{\perp})].\label{wd1}
\end{align}

Let $v,w \in R$ be partial isometries such that 
\begin{align*}
v^*v &=  p,  & vv^*  &=   q,\\
w^*w  &=  p^{\perp}, & ww^*  &=  q^{\perp}.
\end{align*}
Then $u := \sigma_M(1_M \otimes (v+w))$ is a unitary with
\begin{align}
\sigma_M(\pi_2 \otimes p) + \sigma_M(\rho_2 \otimes p^{\perp}) &= u^*(\sigma_M(\pi_2 \otimes q) + \sigma_M(\rho_2 \otimes q^{\perp}))u. \label{wd2}
\end{align}

Let $\left\{u_n\right\}, \left\{v_n\right\} \subset \mathcal{U}(M)$ be such that
\begin{align*}
\pi_1(a) & =  \lim_n u_n \pi_2(a) u_n^*,\\
\rho_1(a) & =  \lim_n v_n \rho_2(a) v_n^*
\end{align*}
for every $a \in \mathfrak{A}$.  Let $w_n = \sigma_M(u_n \otimes p) + \sigma_M(v_n \otimes p^{\perp}).$  Then $\left\{w_n\right\} \subset \mathcal{U}(M)$ with
\begin{align}
\sigma_M(\pi_1 \otimes p) + \sigma_M(\rho_1 \otimes p^{\perp}) &= \lim_n w_n(\sigma_M(\pi_2\otimes p) + \sigma_M(\rho_2 \otimes p^{\perp}))w_n^*. \label{wd3}
\end{align}

So by \eqref{wd3}, \eqref{wd2}, and \eqref{wd1} respectively we have
\begin{align*}
[\sigma_M(\pi_1 \otimes p) + \sigma_M(\rho_1 \otimes p^{\perp}] & =  [\sigma_M(\pi_2 \otimes p) + \sigma_M(\rho_2 \otimes p^{\perp})]\\
& =  [\sigma_M(\pi_2 \otimes q) + \sigma_M(\rho_2 \otimes q^{\perp})]\\
& =  [s_M(\pi_2 \otimes q) + s_M(\rho_2 \otimes q^{\perp})].\qedhere
\end{align*}
\end{proof}

We are now ready to prove the following theorem.

\begin{thm}\label{convexlikestructure}
With convex combinations defined as in Definition \ref{convexcombo}, $\HOM_w(\FR{A},M)$ satisfies the axioms of Brown's convex-like-structure (Definition 2.1 of \cite{topdyn}).  Therefore, by Proposition \ref{separable} and the main result of \cite{capfri}, $\HOM_w(\FR{A},M)$ may be considered as a closed, bounded, convex subset of a separable Banach space.
\end{thm}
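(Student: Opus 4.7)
The plan is to verify, one at a time, each of the axioms in Brown's Definition 2.1 of a convex-like structure, and then invoke the Capraro–Fritz embedding theorem together with Proposition~\ref{separable}. Well-definedness of the operation has already been established in Proposition~\ref{welldefined}, so what remains is the algebraic identities, the boundary identities, commutativity, associativity, and the metric compatibility inequality.

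First I would dispatch the easy axioms. The identity $1\cdot [\pi] + 0 \cdot [\rho] = [\pi]$ follows by taking $p = 1_R$ in Definition~\ref{convexcombo}, so that the convex combination reduces to $[\sigma_M(\pi \otimes 1_R)]$, which equals $[\pi]$ by regularity of $\sigma_M$. Idempotency $t[\pi] + (1-t)[\pi] = [\pi]$ follows by the same reasoning, since
\[\sigma_M(\pi \otimes p) + \sigma_M(\pi \otimes p^{\perp}) = \sigma_M(\pi \otimes 1_R) \sim \pi.\]
Commutativity $t[\pi] + (1-t)[\rho] = (1-t)[\rho] + t[\pi]$ is obtained by picking a partial isometry $v \in R$ with $v^*v = p$ and $vv^* = q$ where $\tau(q) = t$ (and a complementary partial isometry for $p^{\perp}, q^{\perp}$), then conjugating by the unitary $\sigma_M(1_M \otimes (v + v^{\perp}))$, exactly as in the proof of Proposition~\ref{welldefined}.

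Next I would handle associativity. For $s,t \in [0,1]$ and equivalence classes $[\pi],[\rho],[\eta]$, one needs to show that $s(t[\pi] + (1-t)[\rho]) + (1-s)[\eta]$ agrees with the combination having weights $(st, s(1-t), 1-s)$ applied to $([\pi],[\rho],[\eta])$. The strategy is to use Proposition~\ref{pitensor1}(2) (any two regular isomorphisms are weakly approximately unitarily equivalent) to replace inner applications of $\sigma_M$ by $\sigma_M \circ (\mathrm{id}_M \otimes \sigma_R)$ for a suitable isomorphism $\sigma_R: R \otimes R \rightarrow R$, and then to match projections in $R$ by partial isometries as in commutativity, using $\tau(pq) = \tau(p)\tau(q)$ for commuting projections in $R \otimes R$.

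The main obstacle, and where most of the work lies, is the metric compatibility axiom: for any generating sequence $\{a_n\}$ of $\FR{A}_{\le 1}$ one needs
\[d\bigl(t[\pi_1] + (1-t)[\rho_1],\, t[\pi_2] + (1-t)[\rho_2]\bigr) \leq \sqrt{t}\, d([\pi_1],[\pi_2]) + \sqrt{1-t}\, d([\rho_1],[\rho_2])\]
(or the analogous Brown–Capraro–Fritz version). The key observation is that for any $x,y \in M$ and projection $p \in R$ with $\tau(p) = t$,
\[\|\sigma_M(x \otimes p) + \sigma_M(y \otimes p^{\perp})\|_2^2 = t\|x\|_2^2 + (1-t)\|y\|_2^2,\]
because $\sigma_M$ preserves the trace and the two summands are orthogonal via the projections $\sigma_M(1_M\otimes p)$ and $\sigma_M(1_M\otimes p^{\perp})$. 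Applying this with $x = \pi_1(a_n) - u\pi_2(a_n)u^*$ and $y = \rho_1(a_n) - v\rho_2(a_n)v^*$ and then taking the unitary $\sigma_M(u\otimes p + v\otimes p^{\perp})$ as the intertwiner inside the infimum that defines $d$, a Cauchy–Schwarz (or square-root) argument yields the required inequality. Once this is established, Proposition~\ref{separable} supplies completeness and separability, and the main theorem of \cite{capfri} provides the embedding into a separable Banach space as a closed, bounded, convex subset, completing the proof.
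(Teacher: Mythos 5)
Your approach is structurally the same as the paper's: verify Brown's axioms one at a time (using the flexibility of regular isomorphisms and addition of projections in $R$), establish completeness and separability via Proposition~\ref{separable}, and invoke the Capraro--Fritz theorem. Your treatment of scalar identity, idempotency (a special case of the linearity axiom), commutativity, and associativity all match the paper's arguments in spirit; the paper handles ``linearity'' directly via $\sigma_M(\pi_1\otimes p_1)+\sigma_M(\pi_1\otimes p_2)=\sigma_M(\pi_1\otimes(p_1+p_2))$ and ``algebraic compatibility'' (your associativity) via the relation $(\mathrm{id}_M\otimes\epsilon)\sim(\sigma_M\otimes\mathrm{id}_R)$, which is precisely the $\sigma_R$-device you describe.

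The one place where your argument does not clearly deliver what is needed is metric compatibility. Brown's axiom, as quoted in the paper, asks for \emph{two} estimates: a Lipschitz estimate in the weights, $d\big(\sum t_i[\pi_i],\sum t_i'[\pi_i]\big)\leq C\sum|t_i-t_i'|$, which you do not address at all; and the variation estimate $d\big(\sum t_i[\pi_i],\sum t_i[\pi_i']\big)\leq\sum t_i\,d([\pi_i],[\pi_i'])$, which you do address but with coefficients $\sqrt{t_i}$ rather than $t_i$. Your orthogonality observation
\[
\big\|\sigma_M(x\otimes p)+\sigma_M(y\otimes p^\perp)\big\|_2^2=t\|x\|_2^2+(1-t)\|y\|_2^2
\]
is correct and is the heart of the matter, but plugging in the intertwiner $\sigma_M(u\otimes p+v\otimes p^\perp)$ and taking the infimum only yields $d(\cdot)\leq\big(t\,d_1^2+(1-t)\,d_2^2\big)^{1/2}$. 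Since $\sqrt{\,\cdot\,}$ is concave, this gives $d(\cdot)\leq\sqrt{t}\,d_1+\sqrt{1-t}\,d_2$, which is \emph{weaker} than the required $t\,d_1+(1-t)\,d_2$ (as $\sqrt{t}\geq t$ on $[0,1]$). A bare ``Cauchy--Schwarz or square-root argument'' will not upgrade the $\ell^2$ bound to the linear bound, so you would either need the more careful argument in Brown's Proposition~4.6 (which the paper cites for exactly this step), or you must check that the Capraro--Fritz embedding theorem really is stated with the $\ell^2$-type estimate you obtain. As written, this step, together with the omitted Lipschitz-in-weights inequality, is the genuine gap in your proposal.
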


\begin{proof}

Given $[\pi_1], \dots, [\pi_n] \in \HOM_w(\FR{A},M)$ and $0 \leq t_1,\dots t_n \leq 1$ such that $\sum t_i = 1$, we must show that $\HOM_w(\FR{A},M)$ is complete under $d$ and that Definition \ref{convexcombo} satisfies the following axioms.

\begin{enumerate}

	\item (commutativity) $t_1[\pi_1] + \cdots t_n[\pi_n] = t_{\alpha(1)}[\pi_{\alpha(1)}] \cdots t_{\alpha(n)}[\pi_{\alpha(n)}]$ for every permutation $\alpha \in S_n$.

	\item (linearity) if $[\pi_1] = [\pi_2]$  then $t_1[\pi_1] + t_2[\pi_2] + t_3[\pi_3] + \cdots + t_n[\pi_n] = (t_1+t_2)[\pi_1] + t_3[\pi_3] + \cdots t_n [\pi_n]$.

	\item (scalar identity) if $t_i = 1$ then $t_1[\pi_1] + \cdots + t_n[\pi_n] = [\pi_i]$.

	\item (metric compatibility) $d((t_1[\pi_1] + \cdots + t_n[\pi_n]), (t_1'[\pi_1] + \cdots t_n'[\pi_n])) \leq C \sum |t_i - t_i'|$ and $d((t_1[\pi_1] + \cdots + t_n[\pi_n]),(t_1[\pi_1'] + \cdots + t_n[\pi_n'])) \leq \sum t_i d([\pi_i],[\pi_i'])$.

	\item (algebraic compatibility) \[s\Big(\sum_{i=1}^n t_i[\pi_i]\Big) + (1-s)\Big(\sum_{j=1}^m t_j'[\pi_j']\Big) = \sum_{i=1}^n st_i[\pi_i] + \sum_{j=1}^m (1-s)t_j'[\pi_j'].\]  

\end{enumerate}

We have completeness by Proposition \ref{separable}. Metric compatibility follows from an argument identical to the one found in Proposition 4.6 of \cite{topdyn}.

Commutativity and scalar identity are automatic.

We check linearity.  That is, if $[\pi_1] = [\pi_2]$ then \[t_1 [\pi_1] + t_2[\pi_2] + \cdots + t_n[\pi_n] = (t_1+t_2)[\pi_1] + \cdots + t_n[\pi_n].\]  By definition, for $\sigma_M$ a regular isomorphism, we have that 
\begin{align*}
t_1[\pi_1] + t_2[\pi_1] + \cdots + t_n[\pi_n] &=  [\sigma_M(\pi_1 \otimes p_1) + \sigma_M(\pi_1 \otimes p_2) + \cdots + \pi_n \otimes p_n]\\
& =  [\sigma_M(\pi_1 \otimes (p_1 + p_2)) + \cdots + \pi_n \otimes p_n]\\
& =  (t_1 + t_2)[\pi_1] + \cdots + t_n[\pi_n]. 
\end{align*}

We next check algebraic compatibility.  That is, for $0\leq t_i,t_j',s \leq 1$ with $\sum t_i  = \sum t_j'=1$, then \[s\Big(\sum t_i [\pi_i]\Big) + (1-s)\Big(\sum t_j'[\pi_j']\Big) = \sum st_i[\pi_i]  + \sum(1-s)t_j'[\pi_j'].\]
We have that 
\[s\Big(\sum t_i [\pi_i]\Big) + (1-s)\Big(\sum t_j'[\pi_j']\Big) \]
\begin{align*}
& =  s\Big[\sum \sigma_M(\pi_i \otimes p_i)\Big] + (1-s)\Big[\sum \sigma_M(\pi_j' \otimes p_j')\Big]\\
& =  \Big[\sum \sigma_M(\sigma_M(\pi_i \otimes p_i) \otimes p_s) + \sum \sigma_M(\sigma_M(\pi_j' \otimes p_j') \otimes p_{1-s})\Big]\\
& =  \Big[\sigma_M((\sigma_M \otimes \text{id}_R)\Big(\sum \pi_i \otimes p_i \otimes p_s + \sum \pi_j' \otimes p_j' \otimes  p_{1-s}\Big)\Big]
\end{align*}
for projections $p_s,p_{1-s},p_i,p_j' \in \mathcal{P}(R)$ with $\tau(p_s) =s, \tau(p_{1-s}), \tau(p_i) = t_i, \tau(p_j') = t_j'$ with $p_{1-s} = 1 - p_s$, $p_i$s pairwise orthogonal, and the $p_j'$s pairwise orthogonal.

From Definition \ref{regiso} we know that $\sigma_M^{-1}$ is weakly approximately unitarily equivalent to $\text{id}_M\otimes 1_R$.  Also $\epsilon\circ (1_R \otimes \text{id}_R)$ is weakly approximately unitarily equivalent to $\text{id}_R$ because it is a unital endomorphism of $R$.  So we get the following equivalences with respect to weak approximate unitary equivalence.

\begin{align*}
(\text{id}_M \otimes \epsilon) \circ (\sigma_M^{-1} \otimes \text{id}_R) & \sim  (\text{id}_M \otimes \epsilon) \circ (\text{id}_M \otimes 1_R \otimes \text{id}_R)\\
& =  \text{id}_M \otimes (\epsilon \circ (1_R \otimes \text{id}_R))\\
& \sim  \text{id}_M \otimes \text{id}_R \\
& = \text{id}_{M\otimes R}
\end{align*}

It follows that 
\begin{align*}
(\text{id}_M \otimes \epsilon) &\sim (\sigma_M \otimes \text{id}_R).
\end{align*}

Thus we get that 
\[s\Big(\sum t_i [\pi_i]\Big) + (1-s)\Big(\sum t_j'[\pi_j']\Big)\]
\begin{align*}
  =& \quad  \Big[\sigma_M\Big((\sigma_M \otimes \text{id}_R)\Big(\sum \pi_i \otimes p_i \otimes p_s + \sum \pi_j' \otimes p_j' \otimes p_{1-s}\Big)\Big)\Big]\\
 = & \quad \Big[\sigma_M\Big((\text{id}_M \otimes \epsilon)\Big(\sum \pi_i \otimes p_i \otimes p_s + \sum \pi_j' \otimes p_j' \otimes p_{1-s}\Big)\Big)\Big]\\
 = & \quad \Big[\sum \sigma_M(\pi_i \otimes \epsilon(p_i \otimes p_s)) + \sum \sigma_M(\pi_j' \otimes \epsilon(p_j' \otimes p_{1-s}))\Big]\\
 = & \quad \sum st_i[\pi_i]  + \sum(1-s)t_j'[\pi_j']
\end{align*}
since the operation is well-defined.\end{proof}

\subsection{Functoriality}\label{functoriality}

A $*$-homomorphism $\varphi: \mathfrak{A} \rightarrow \mathfrak{B}$ induces an affine map $\varphi^*: \HOM_w(\mathfrak{B}, M) \rightarrow \HOM_w(\mathfrak{A},M)$ given by \[\varphi^*([\pi]) = [\pi\circ \varphi].\]

\begin{prop}\label{phihat}
The induced map $\varphi^*$ is well-defined, continuous, and affine.
\end{prop}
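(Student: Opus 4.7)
The proposition has three parts; all three follow from essentially formal manipulations, so the plan is to handle each one directly using the relevant definition.

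For well-definedness, I would start with the equivalent formulation of weak approximate unitary equivalence given at the beginning of the preliminaries: if $\pi_1 \sim \pi_2$ as maps $\mathfrak{B} \to M$, there exist unitaries $\{u_n\} \subset \mathcal{U}(M)$ with $\|u_n \pi_2(b) u_n^* - \pi_1(b)\|_2 \to 0$ for each $b \in \mathfrak{B}$. Since $\varphi(a) \in \mathfrak{B}$ for every $a \in \mathfrak{A}$, the same sequence $\{u_n\}$ witnesses $\pi_1 \circ \varphi \sim \pi_2 \circ \varphi$, so $[\pi_1 \circ \varphi] = [\pi_2 \circ \varphi]$.

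For continuity, I would use the intrinsic description of the topology (pointwise $\|\cdot\|_2$-convergence on representatives) rather than directly manipulate the metric $d$, thereby sidestepping dependence on the choice of generating sequence. Suppose $[\pi_n] \to [\pi]$ in $\HOM_w(\mathfrak{B},M)$; pick representatives $\pi_n' \in [\pi_n]$ with $\pi_n'(b) \to \pi(b)$ in $\|\cdot\|_2$ for each $b \in \mathfrak{B}$. For each $a \in \mathfrak{A}$, applying this to $b = \varphi(a)$ gives $(\pi_n' \circ \varphi)(a) \to (\pi \circ \varphi)(a)$ in $\|\cdot\|_2$. Since $\pi_n' \circ \varphi$ represents $\varphi^*([\pi_n])$ and the target topology agrees with the metric $d$ on $\HOM_w(\mathfrak{A},M)$, this gives $\varphi^*([\pi_n]) \to \varphi^*([\pi])$.

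For affineness, I would unpack Definition \ref{convexcombo}. Fix $t \in [0,1]$, a projection $p \in \mathcal{P}(R)$ with $\tau(p) = t$, and a regular isomorphism $\sigma_M$. The key observation is the equality of maps $\mathfrak{A} \to M \otimes R$,
\[
(\pi_i \otimes p) \circ \varphi \;=\; (\pi_i \circ \varphi) \otimes p,
\]
since both send $a \mapsto \pi_i(\varphi(a)) \otimes p$. Composing with $\sigma_M$ and summing, we get the literal equality of maps $\mathfrak{A} \to M$,
\[
\bigl(\sigma_M(\pi_1 \otimes p) + \sigma_M(\pi_2 \otimes p^\perp)\bigr) \circ \varphi \;=\; \sigma_M\bigl((\pi_1 \circ \varphi) \otimes p\bigr) + \sigma_M\bigl((\pi_2 \circ \varphi) \otimes p^\perp\bigr).
\]
The left-hand side represents $\varphi^*(t[\pi_1] + (1-t)[\pi_2])$ and the right-hand side represents $t\varphi^*([\pi_1]) + (1-t)\varphi^*([\pi_2])$, giving the affine identity for binary combinations; induction (or the same argument with $n$ summands) handles the general case.

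There is no substantial obstacle here: the statement is essentially a consequence of the functoriality of precomposition together with the observation that $\sigma_M(\cdot \otimes p)$ commutes with precomposition by $\varphi$. The only minor subtlety is in continuity, where one should argue via pointwise convergence on $\mathfrak{A}$ rather than directly comparing the two metrics built from different generating sequences for $\mathfrak{A}_{\le 1}$ and $\mathfrak{B}_{\le 1}$.
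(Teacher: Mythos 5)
Your proposal is correct and takes essentially the same approach as the paper: well-definedness via transporting the witnessing unitaries through $\varphi$, and affineness by observing that $\sigma_M(\cdot\otimes p)$ commutes with precomposition by $\varphi$. The paper dismisses continuity with "just as quick to see"; your pointwise-convergence argument is a reasonable way to flesh that out.
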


\begin{proof}
Well-Defined:  Let $[\pi] = [\rho] \in \HOM_w(\mathfrak{B},M)$.  So there is a sequence of unitaries $\left\{u_n\right\} \subset M$ such that $\rho(b) = \lim_n u_n \pi(b) u_n^*$ for any $b \in \mathfrak{B}$.  Thus, for $a \in \mathfrak{A}$, we have that $\rho(\varphi(a)) = \lim_n u_n \pi(\varphi(a)) u_n^*$.  Therefore $\varphi^*([\pi]) = [\pi \circ \varphi] = [\rho \circ \varphi] = \varphi^*([\rho])$.  Continuity is just as quick to see.

Affine: \begin{align*}
\varphi^*(t[\pi] + (1-t)[\rho]) & =  \varphi^*([\sigma_M(\pi \otimes p) + \sigma_M(\rho \otimes p^{\perp})])\\
& =  [(\sigma_M(\pi \otimes p) + \sigma_M(\rho \otimes p^{\perp}))\circ \varphi] \\
& =  [\sigma_M((\pi \circ \varphi )\otimes p) + \sigma_M((\rho\circ \varphi) \otimes p^{\perp})]\\
& =  t[\pi \circ \varphi] + (1-t)[\rho\circ \varphi]\\
& =  t\varphi^*([\pi]) + (1-t)\varphi^*([\rho]).\qedhere
\end{align*}
\end{proof}

\noindent The chain rule and preservation of identity are obvious observations; so we see that $\HOM_w(\cdot,M)$ is a contravariant functor from the category of $C^*$-algebras to the category of affine metric spaces.  

\begin{exmpl}
We exhibit an injective homomorphism $\varphi$ such that $\varphi^*$ fails to be surjective.  Consider $\varphi: \mathbb{C} \oplus \mathbb{M}_2 \rightarrow \mathbb{C} \oplus \mathbb{M}_3$ given by 
\[\lambda \oplus \left(\begin{matrix} a & b \\ c& d \end{matrix}\right) \mapsto \lambda \oplus \left(\begin{matrix} a & b & 0 \\ c & d & 0 \\ 0 & 0 & \lambda\end{matrix}\right). \]
It is well-known that up to unitary equivalence, $*$-homomorphisms from finite dimensional algebras into II$_1$ factors are completely determined by their induced traces (see also Theorem \ref{nuctrace}).  Therefore the induced map $\varphi^*: \HOM_w( \mathbb{C} \oplus \mathbb{M}_3, M) \rightarrow \HOM_w(\mathbb{C} \oplus \mathbb{M}_2,M)$ may be understood as $\varphi^*: T( \mathbb{C} \oplus \mathbb{M}_3) \rightarrow T(\mathbb{C} \oplus \mathbb{M}_2)$ where given $f \in T( \mathbb{C} \oplus \mathbb{M}_3)$ we have $\varphi^*(f) = f \circ \varphi$.  Since both algebras have two summands, both trace spaces are the two-vertex simplex (i.e. the unit interval).  The fact that $\varphi^*$ is affine allows us to only check the images of the extreme points (endpoints) under $\varphi^*$ in order to see the image $\varphi^*(T(\mathbb{C} \oplus \mathbb{M}_3))$.  One endpoint of $T( \mathbb{C} \oplus \mathbb{M}_3)$ is the trace 
\[f_1(\lambda \oplus (a_{ij}))= \lambda.\]  We see that \[\varphi^*(f_1)\Bigg(\lambda \oplus \left(\begin{matrix} a & b \\ c& d \end{matrix}\right)\Bigg) = \lambda.\]  The other endpoint of $T( \mathbb{C} \oplus \mathbb{M}_3)$ is 
\[f_2(\lambda \oplus (a_{ij})) = \text{tr}_3(a_{ij})\] where $\text{tr}_3$ is the (unique) normalized trace on $\mathbb{M}_3$.  We get that \[\varphi^*(f_2)\Bigg(\lambda \oplus \left(\begin{matrix} a & b \\ c& d \end{matrix}\right)\Bigg) = \frac{1}{3}\lambda + \frac{2}{3}\text{tr}_2\Bigg(\left(\begin{matrix} a & b \\ c& d \end{matrix}\right)\Bigg)\] where $\text{tr}_2$ is the normalized trace on $\mathbb{M}_2$.  So the image $\varphi^*(T(\mathbb{C} \oplus \mathbb{M}_3))$ is the convex hull of $\varphi^*(f_1)$ and $\varphi^*(f_2)$.  From this it is clear that the extreme trace \[\lambda \oplus    \left(\begin{matrix} a & b \\ c& d \end{matrix}\right) \mapsto \text{tr}_2\Bigg(\left(\begin{matrix} a & b \\ c& d \end{matrix}\right)\Bigg)\] does not lie in the image $\varphi^*(T(\mathbb{C} \oplus \mathbb{M}_3))$.  Hence $\varphi^*$ is not surjective.
\end{exmpl}

On the other hand, we have the following fact.

\begin{prop}\label{surin}
If $\varphi$ is surjective, then $\varphi^*$ is an affine homeomorphism onto its image.
\end{prop}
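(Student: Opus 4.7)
The plan is to leverage what is already known from Proposition \ref{phihat}: $\varphi^*$ is well-defined, continuous, and affine. What remains is (i) injectivity and (ii) continuity of the inverse on the image. Both will be obtained directly from surjectivity of $\varphi$, using that any $b \in \mathfrak{B}$ can be written as $\varphi(a)$ for some $a \in \mathfrak{A}$.

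For injectivity, suppose $\varphi^*([\pi]) = \varphi^*([\rho])$ in $\HOM_w(\mathfrak{A},M)$. Then $\pi \circ \varphi \sim \rho \circ \varphi$, so there is a sequence $\{u_n\} \subset \mathcal{U}(M)$ such that $\lim_n \|\pi(\varphi(a)) - u_n \rho(\varphi(a)) u_n^*\|_2 = 0$ for every $a \in \mathfrak{A}$. Since $\varphi(\mathfrak{A}) = \mathfrak{B}$, the same sequence witnesses $\pi \sim \rho$, and hence $[\pi] = [\rho]$.

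For the topology, I would exploit the freedom in the choice of generating sequence pointed out in Remark \ref{metricgenerators}. Fix any countable generating set $\{a_n\} \subset \mathfrak{A}_{\leq 1}$ and define the metric $d_\mathfrak{A}$ on $\HOM_w(\mathfrak{A},M)$ using $\{a_n\}$. Since $\varphi$ is a contractive $*$-homomorphism, $\{\varphi(a_n)\} \subset \mathfrak{B}_{\leq 1}$; and since $\varphi$ is surjective and the image of a $*$-homomorphism between $C^*$-algebras is closed, the closed $*$-subalgebra generated by $\{\varphi(a_n)\}$ equals $\varphi(\mathfrak{A}) = \mathfrak{B}$, so $\{\varphi(a_n)\}$ is a generating sequence for $\mathfrak{B}$. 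Using this to define $d_\mathfrak{B}$ on $\HOM_w(\mathfrak{B},M)$, the formulas in Definition \ref{metric} yield
\[
d_\mathfrak{A}(\varphi^*[\pi], \varphi^*[\rho]) = \inf_{u \in \mathcal{U}(M)} \Big(\sum_{n=1}^\infty \tfrac{1}{2^{2n}} \|\pi(\varphi(a_n)) - u\rho(\varphi(a_n))u^*\|_2^2\Big)^{1/2} = d_\mathfrak{B}([\pi],[\rho]),
\]
so $\varphi^*$ is an isometry with respect to these compatible choices of generating sequences. Since any other choice induces an equivalent topology (the topology of pointwise convergence of equivalence classes, which is intrinsic and does not depend on the sequence), $\varphi^*$ is a homeomorphism onto its image.

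I do not expect any serious obstacle: the only subtlety worth stating carefully is that $\{\varphi(a_n)\}$ really does generate $\mathfrak{B}$ as a $C^*$-algebra, which follows from surjectivity together with the standard fact that $\varphi(\mathfrak{A})$ is closed in $\mathfrak{B}$. Once compatible generating sequences are chosen, the metrics agree term-by-term and the result is immediate.
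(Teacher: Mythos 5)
Your proof is correct, but your argument for continuity of the inverse takes a genuinely different route from the paper's. The paper works directly with pointwise-convergent representatives: given $[\pi_n\circ\varphi]\to[\pi\circ\varphi]$, it picks representatives $\gamma_n\in[\pi_n\circ\varphi]$ converging pointwise to $\pi\circ\varphi$, observes that $\ker\varphi\subseteq\ker\gamma_n$ (since $\gamma_n$ is a $\|\cdot\|_2$-limit of unitary conjugates of $\pi_n\circ\varphi$, which kills $\ker\varphi$), and then invokes the first isomorphism theorem to factor $\gamma_n=\delta_n\circ\varphi$, after which surjectivity of $\varphi$ gives $\delta_n\to\pi$ pointwise and a short transport argument gives $[\delta_n]=[\pi_n]$. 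You instead exploit Remark \ref{metricgenerators}: push a generating sequence of $\FR{A}$ forward through $\varphi$ to get a generating sequence for $\FR{B}$ (your observation that $C^*(\varphi(\{a_n\}))=\varphi(C^*(\{a_n\}))=\FR{B}$ since $*$-homomorphic images are closed is the point to make carefully, and you do), and then the two metric formulas from Definition \ref{metric} agree term-by-term, so $\varphi^*$ is literally an isometry; since every choice of generating sequence induces the same topology of pointwise convergence, this is enough. Your approach is cleaner and replaces the representative-chasing and isomorphism-theorem step with a structural observation about the metric; the paper's version is more hands-on and doesn't rely on the remark that the topology is independent of the generating sequence. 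Both are valid.
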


\begin{proof}
We must show that $\varphi^*$ is injective and that $(\varphi^*)^{-1}$ is continuous on \linebreak $\varphi^*(\HOM_w(\FR{B},M))$.  Showing that $\varphi^*$ is injective is a simple exercise and will be left to the reader.  To show the continuity of $(\varphi^*)^{-1}$ assume that $\varphi^*([\pi_n]) \rightarrow \varphi^*([\pi])$ in $\HOM_w(\FR{A},M)$.  This is the same as saying $[\pi_n \circ \varphi] \rightarrow [\pi \circ \varphi]$.  We will demonstrate that $[\pi_n] \rightarrow [\pi]$.  Let $\gamma_n \in [\pi_n \circ \varphi]$ be representatives such that $\gamma_n(a) \rightarrow \pi(\varphi(a))$ in $M$ under the trace norm.  Since $\gamma_n \in [\pi_n \circ \varphi]$ we get that $\ker(\varphi) \subseteq \ker(\gamma_n)$ for every $n$.  Thus by isomorphism theorems, we can write $\gamma_n = \delta_n \circ \varphi$ for some $*$-homomorphism $\delta_n: \FR{B} \rightarrow M$ for every $n$.  For $b \in \FR{B}$, we have that $b = \varphi(a)$ for some $a \in \FR{A}$; thus \[\delta_n(b) = \delta_n(\varphi(a)) = \gamma_n(a) \rightarrow \pi(\varphi(a)) = \pi(b).\]

It remains to show that $[\delta_n] = [\pi_n]$ for every $n$.  Fix $n$; there exists $\left\{u_k(n)\right\} \subset \mathcal{U}(M)$ such that for every $a \in \FR{A}$,
\[u_k(n) \gamma_n(a)u_k(n)^*  \rightarrow \pi_n(\varphi(a)).\]
Let $b \in \FR{B}$, and let $a \in \FR{A}$ be such that $\varphi(a) = b$.  Then 
\begin{align*}u_k(n)\delta_n(b)u_k(n)^* =& \quad u_k(n)\delta_n(\varphi(a))u_k(n)^*\\
 =& \quad u_k(n)\gamma_n(a)u_k(n)^*\\
 \rightarrow & \quad \pi_n(\varphi(a)) \\
=& \quad \pi_n(b).\qedhere
\end{align*}
\end{proof}


Similarly, a unital $*$-homomorphism $\psi: M_1 \rightarrow M_2$ between McDuff factors $M_1$ and $M_2$ also induces a map \[\psi_*: \HOM_w(\mathfrak{A},M_1) \rightarrow \HOM_w(\mathfrak{A}, M_2)\] given by $\psi_*([\pi]) = [\psi \circ \pi]$.

\begin{prop}\label{psitilde}
The induced map $\psi_*$ is well-defined, continuous, and affine.
\end{prop}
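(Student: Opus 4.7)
The plan is to handle the three claims in turn, following the template of Proposition \ref{phihat}. Well-definedness and continuity are routine; the affine claim is the substantive point, since the convex combinations on the domain side use a regular isomorphism $\sigma_{M_1}\in\text{REG}(M_1)$ while those on the codomain side use $\sigma_{M_2}\in\text{REG}(M_2)$, and these do not in general intertwine $\psi$ on the nose.

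For well-definedness and continuity the essential observation is that $\psi$ is $\|\cdot\|_2$-isometric: since $\tau_{M_2}\circ\psi$ is a tracial state on the factor $M_1$, uniqueness of the trace forces $\tau_{M_2}\circ\psi=\tau_{M_1}$, whence $\|\psi(x)\|_2=\|x\|_2$ for every $x\in M_1$. If $[\pi]=[\rho]$ is witnessed by unitaries $\{u_n\}\subset\mathcal{U}(M_1)$, then $\{\psi(u_n)\}\subset\mathcal{U}(M_2)$ witnesses $[\psi\circ\pi]=[\psi\circ\rho]$, and the same isometry gives continuity of $\psi_*$ under the metric $d$.

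For the affine property, Definition \ref{convexcombo} reduces the claim to showing
\[\bigl[\psi\circ\sigma_{M_1}(\pi\otimes p+\rho\otimes p^\perp)\bigr]=\bigl[\sigma_{M_2}(\psi\pi\otimes p+\psi\rho\otimes p^\perp)\bigr]\]
in $\HOM_w(\FR{A},M_2)$. Writing $\eta:=\pi\otimes p+\rho\otimes p^\perp:\FR{A}\to M_1\otimes R$, both sides take the form $\Sigma\circ\eta$ where $\Sigma$ is either $\Sigma_1:=\psi\circ\sigma_{M_1}$ or $\Sigma_2:=\sigma_{M_2}\circ(\text{id}_{M_2}\otimes\text{id}_R)\circ(\psi\otimes\text{id}_R)$, both viewed as unital $*$-homomorphisms $M_1\otimes R\to M_2$. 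It therefore suffices to establish $\Sigma_1\sim\Sigma_2$ as $*$-homomorphisms $M_1\otimes R\to M_2$, since applying a common sequence of approximating unitaries in $M_2$ after composing with $\eta$ then witnesses the equality of the two classes.

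The main obstacle is this WAUE claim, and this is where the McDuff hypothesis on $M_2$ is essential. Regularity of $\sigma_{M_1}$ and $\sigma_{M_2}$ immediately yields that $\Sigma_1\circ(\text{id}_{M_1}\otimes 1_R)\sim\psi\sim\Sigma_2\circ(\text{id}_{M_1}\otimes 1_R)$ as maps $M_1\to M_2$, while $\Sigma_1(1_{M_1}\otimes\cdot)$ and $\Sigma_2(1_{M_1}\otimes\cdot)$ are both unital embeddings $R\to M_2$; the classical uniqueness of $R$-embeddings into any separable II$_1$-factor up to WAUE (an inductive-limit argument on finite-dimensional subalgebras using trace uniqueness) reconciles the latter. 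To promote these two partial equivalences into a single one, the plan is to exploit the freedom in the choice of $\sigma_{M_2}$ granted by Proposition \ref{welldefined}, selecting via the McDuff structure of $M_2$ a regular isomorphism whose distinguished copy of $R$ inside $M_2$ coincides (up to an inner WAUE conjugation) with $\psi(\sigma_{M_1}(1_{M_1}\otimes R))$; a single sequence of unitaries in $M_2$ then simultaneously implements the equivalence on both tensor factors, completing the proof of affineness.
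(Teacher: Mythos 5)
Your reduction is exactly the one in the paper: after unwinding Definition \ref{convexcombo}, the affine claim comes down to showing $\Sigma_1:=\psi\circ\sigma_{M_1}\sim\Sigma_2:=\sigma_{M_2}\circ(\psi\otimes\text{id}_R)$ as maps $M_1\otimes R\to M_2$, and the well-definedness/continuity observations (using that $\psi$ is automatically $\|\cdot\|_2$-isometric) are fine. The gap is in the final step. Your plan is to choose $\sigma_{M_2}\in\text{REG}(M_2)$ so that its distinguished copy of $R$, namely $\sigma_{M_2}(1_{M_2}\otimes R)$, agrees with $\psi(\sigma_{M_1}(1_{M_1}\otimes R))$ up to conjugacy, and then claim that a single sequence of unitaries in $M_2$ intertwines both tensor factors at once. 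Neither part is justified. The copy $\sigma_{M_2}(1_{M_2}\otimes R)$ sits as a tensor complement in $M_2$ (its relative commutant is a copy of $M_2$), whereas $\psi(\sigma_{M_1}(1_{M_1}\otimes R))$ is a copy of $R$ inside the possibly proper subfactor $\psi(M_1)$; unless $\psi$ is surjective there is no reason this copy of $R$ sits in a tensor-complement position in $M_2$, so no such regular isomorphism need exist. And even granting that the restrictions of $\Sigma_1$ and $\Sigma_2$ to each tensor factor are separately WAUE, that does not produce a single intertwining sequence of unitaries for $\Sigma_1$ and $\Sigma_2$ as maps on $M_1\otimes R$; merging the two sequences is exactly the content to be proved, not something that follows formally once the $R$-copies are ``aligned.''

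The paper's argument sidesteps all of this by a short chain of identities. Since $\sigma_{M_1},\sigma_{M_2}$ are regular, $\sigma_{M_i}^{-1}\sim\text{id}_{M_i}\otimes 1_R$, and one computes
\begin{align*}
\sigma_{M_2}^{-1}\circ\psi &\sim (\text{id}_{M_2}\otimes 1_R)\circ\psi = \psi\otimes 1_R = (\psi\otimes\text{id}_R)\circ(\text{id}_{M_1}\otimes 1_R) \sim (\psi\otimes\text{id}_R)\circ\sigma_{M_1}^{-1},
\end{align*}
which is equivalent to $\Sigma_1\sim\Sigma_2$ after pre- and post-composing by $\sigma_{M_1}$ and $\sigma_{M_2}$. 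The only facts used are that WAUE is stable under pre-composition with any map and under post-composition with a $\|\cdot\|_2$-isometric unital $*$-homomorphism (here $\psi\otimes\text{id}_R$), plus the trivial commutation identity in the middle. If you want to rescue your approach, replace the ``align copies of $R$ in $M_2$'' step with this algebraic chain; your two ``partial'' WAUEs appear implicitly as the two $\sim$'s, and the commutation identity $(\text{id}_{M_2}\otimes 1_R)\circ\psi=(\psi\otimes\text{id}_R)\circ(\text{id}_{M_1}\otimes 1_R)$ is precisely what glues them into a single equivalence without needing to choose a tailored $\sigma_{M_2}$.
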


\begin{proof}
The fact that $\psi$ is a unital $*$-homomorphism guarantees that $\psi_*$ is well-defined.  Continuity is also routine.

To show that $\psi_*$ is affine, we will show that for $[\pi],[\rho] \in \HOM_w(\mathfrak{A},M_1)$ and for a projection $p \in \mathcal{P}(R)$ we have 
\begin{align*}
[\psi(\sigma_{M_1}(\pi \otimes p) + \sigma_{M_1}(\rho \otimes p^{\perp}))]  &=  [\sigma_{M_2}((\psi \circ \pi) \otimes p) + \sigma_{M_2}((\psi \circ \rho) \otimes p^{\perp})]\\
&\text{ or}\\
[\psi\circ \sigma_{M_1}(\pi \otimes p + \rho \otimes p^{\perp})]  & =  [\sigma_{M_2} \circ (\psi \otimes \text{id}_R)(\pi \otimes p + \rho \otimes p^{\perp})]
\end{align*} Here $\sigma_{M_1}$ and $\sigma_{M_2}$ are regular isomorphisms.

Thus it suffices to show that $\psi \circ \sigma_{M_1} \sim \sigma_{M_2}\circ(\psi \otimes \text{id}_R)$; or equivalently, $\sigma_{M_2}^{-1} \circ \psi \sim (\psi \otimes \text{id}_R)\circ \sigma_{M_1}^{-1}$.  Since $\sigma_{M_1}$ and $\sigma_{M_2}$ are regular isomorphisms, 
\begin{align*}
\sigma_{M_2}^{-1} \circ \psi & \sim  (\text{id}_{M_2} \otimes 1_R) \circ \psi\\
& =  \psi \otimes 1_R\\
& =  (\psi \otimes \text{id}_R) \circ (\text{id}_{M_1} \otimes 1_R)\\
& \sim  (\psi \otimes \text{id}_R) \circ \sigma_{M_2}^{-1}.\qedhere
\end{align*}
\end{proof}

\noindent  One can easily see that $\HOM_w(\FR{A},\cdot)$ satisfies the chain rule and preserves identities; so $\HOM_w(\FR{A},\cdot)$ is a covariant functor from the category of McDuff II$_1$ factors to the category of affine metric spaces.

\section{Connection to the Trace Space}\label{tracespace}

Given $[\pi] \in \HOM_w(\FR{A},M)$, we can assign to it a trace on $\FR{A}$ in the following natural way.

\begin{dfn}\label{tracecorr}
For a separable unital tracial $C^*$-algebra $\FR{A}$ and a McDuff factor $M$, let $\alpha_{(\FR{A},M)}: \HOM_w(\FR{A},M) \rightarrow T(\FR{A})$ be the map given by $\alpha_{(\FR{A},M)}([\pi]) = \tau_M \circ \pi$ where $\tau_M$ is the unique normalized trace of $M$, and $T(\FR{A})$ denotes the trace space of $\FR{A}$ (see \eqref{tracedef}).
\end{dfn}

\begin{prop}
For any separable unital tracial $C^*$-algebra $\FR{A}$ and McDuff factor $M$ the map $\alpha_{(\FR{A},M)}: \HOM_w(\FR{A},M) \rightarrow T(\FR{A})$ is well-defined, continuous (from the $d$-metric to the weak-$*$ topology), and affine.
\end{prop}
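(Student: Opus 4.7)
The plan is to reduce all three claims to two basic facts: first, that the unique tracial state $\tau_M$ is $\|\cdot\|_2$-contractive via Cauchy--Schwarz, giving $|\tau_M(x)-\tau_M(y)|\leq \|x-y\|_2$; and second, that any $*$-isomorphism between II$_1$-factors preserves the unique faithful tracial state, so in particular $\tau_M\circ\sigma_M = \tau_{M\otimes R} = \tau_M\otimes\tau_R$ for any regular isomorphism $\sigma_M$. Once these are in hand, each of the three properties follows by a short direct check.

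For well-definedness, I would take $\pi\sim\rho$ with a witnessing unitary sequence $\{u_n\}\subset\mathcal{U}(M)$ and note that $\tau_M(u_n\pi(a)u_n^*)=\tau_M(\pi(a))$ for every $n$, so applying the Cauchy--Schwarz estimate above gives $\tau_M(\pi(a))=\lim_n\tau_M(u_n\pi(a)u_n^*)=\tau_M(\rho(a))$ for every $a\in\FR{A}$. Continuity from $d$ to the weak-$*$ topology is essentially the same argument at the level of classes: given $[\pi_k]\to[\pi]$, pick representatives $\pi_k'\in[\pi_k]$ with $\pi_k'(a)\to\pi(a)$ in $\|\cdot\|_2$ for every $a\in\FR{A}$, and observe $(\tau_M\circ\pi_k')(a)\to(\tau_M\circ\pi)(a)$ by contractivity of $\tau_M$; since $\tau_M\circ\pi_k'=\tau_M\circ\pi_k$ by the well-definedness already established, this yields pointwise convergence on $\FR{A}$, which is exactly weak-$*$ convergence in $T(\FR{A})$.

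For affinity, I would unpack the definition: for $[\pi],[\rho]\in\HOM_w(\FR{A},M)$, $t\in[0,1]$, and $p\in\mathcal{P}(R)$ with $\tau_R(p)=t$,
\begin{align*}
\alpha_{(\FR{A},M)}(t[\pi]+(1-t)[\rho])(a) &= \tau_M\bigl(\sigma_M(\pi(a)\otimes p)+\sigma_M(\rho(a)\otimes p^\perp)\bigr)\\
&= \tau_{M\otimes R}(\pi(a)\otimes p)+\tau_{M\otimes R}(\rho(a)\otimes p^\perp)\\
&= t\,\tau_M(\pi(a))+(1-t)\,\tau_M(\rho(a)),
\end{align*}
using linearity of $\tau_M$, the fact that $\sigma_M$ carries $\tau_M$ to $\tau_{M\otimes R}$, and the product-trace identity on $M\otimes R$. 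This is precisely $(t\,\alpha_{(\FR{A},M)}([\pi])+(1-t)\,\alpha_{(\FR{A},M)}([\rho]))(a)$, and the general convex-combination case follows by induction (or by the same calculation with more summands).

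None of the steps seem obstructive: the only mild subtlety is making sure the trace on the target of $\sigma_M$ really does factor as $\tau_M\otimes\tau_R$, but this is immediate from the uniqueness of the trace on the II$_1$-factor $M\otimes R$ and the fact that $\tau_M\otimes\tau_R$ is a trace. I expect the whole proof to occupy roughly half a page with no technical surprises.
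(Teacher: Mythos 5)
Your proposal is correct and essentially coincides with the paper's argument: well-definedness and continuity both come down to the $\|\cdot\|_2$-contractivity of $\tau_M$, and affinity comes down to the trace identity $\tau_M(\sigma_M(\,\cdot\,\otimes p))=t\,\tau_M(\,\cdot\,)$. The only cosmetic difference is that the paper derives this identity by observing that $\tau_M(\sigma_M(\,\cdot\,\otimes p))$ is a trace on $M$ with value $t$ at $1_M$ and invoking uniqueness of the trace on $M$, whereas you reach the same conclusion by first noting $\tau_M\circ\sigma_M=\tau_{M\otimes R}=\tau_M\otimes\tau_R$ and then applying the product-trace formula; both routes rest on the same uniqueness-of-trace fact.
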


\begin{proof}
That $\alpha_{(\FR{A},M)}$ is well-defined and continuous follows from the continuity of $\tau_M$.  To see that $\alpha_{(\FR{A},M)}$ is affine,  let $[\pi],[\rho] \in \HOM_w(\FR{A},M)$ and $t \in [0,1]$.  Then for $a \in \FR{A}$ and $p \in \mathcal{P}(R)$ with $\tau(p) = t$, letting $\alpha$ stand for $\alpha_{(\FR{A},M)}$ we have
	\begin{align}
		\alpha(t[\pi] + (1-t)[\rho])(a) &= \alpha(\sigma_M(\pi \otimes p + \rho \otimes p^{\perp}))(a) \notag\\
		& =  \tau(\sigma_M(\pi(a) \otimes p)) + \tau(\sigma_M(\rho(a) \otimes p^{\perp})) \notag\\
		& =  t\tau(\pi(a)) + (1-t)\tau(\rho(a)) \label{uniquetrace} \\
		& =  (t\alpha([\pi]) + (1-t)\alpha([\rho]))(a).\notag
	\end{align}

Note that \eqref{uniquetrace} follows from the fact that $\tau(\sigma_M(\cdot \otimes p))$ is a trace on $M$ that evaluates to $t$ at $1_M$, and thus by the uniqueness of trace, we get that $\tau(\sigma_M(\cdot \otimes p)) = t\tau_M(\cdot)$.
\end{proof}

\subsection{Nuclear and Hyperfinite Cases}

In this subsection, we investigate the cases in which $\alpha_{(\FR{A},M)}$ is an affine homeomorphism for every McDuff $M$.  We first prove a technical lemma.

\begin{lem}\label{unitarysubseq}
Let $\pi,\pi_k: \FR{A}\rightarrow M$ be $*$-homomorphisms for $k \in \mathbb{N}$ and consider the $*$-homomorphisms 
\[\begin{array}{ccc}
(\pi_k)_\mathcal{U}: \FR{A} \rightarrow M^\mathcal{U}& \text{ and } & \pi^\mathcal{U}: \FR{A} \rightarrow M^\mathcal{U}
\end{array} \]
given by
\[\begin{array}{ccc}
(\pi_k)_\mathcal{U}(a) = (\pi_k(a))_\mathcal{U} & \text{ and } & \pi^\mathcal{U}(a) = (\pi(a))_\mathcal{U}\\
\end{array}\]
where $\mathcal{U}$ is a free ultrafilter on $\mathbb{N}$  (see Appendix A of \cite{brownozawa}).  If $(\pi_k)_\mathcal{U}$ is unitarily equivalent to $\pi^\mathcal{U}$, then there is a subsequence $\left\{k_j\right\}$ such that $[\pi_{k_j}] \rightarrow [\pi]$ in $\HOM_w(\FR{A},M)$.
\end{lem}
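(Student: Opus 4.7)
The plan is to unpack the unitary equivalence in $M^\mathcal{U}$ into a sequence of unitaries in $M$, then run a standard diagonal argument using separability of $\FR{A}$ and the fact that $\mathcal{U}$-large sets are infinite.

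First, I would choose a unitary $U \in \mathcal{U}(M^\mathcal{U})$ implementing the unitary equivalence, and lift it to a representing sequence $(u_k)_\mathcal{U} = U$ with $u_k \in \mathcal{U}(M)$ (one may arrange this by a standard polar-decomposition/functional-calculus argument in $M^\mathcal{U}$). The hypothesis $U (\pi_k(a))_\mathcal{U} U^* = (\pi(a))_\mathcal{U}$ for each $a \in \FR{A}$ translates to
\[
\lim_{k \to \mathcal{U}} \|u_k \pi_k(a) u_k^* - \pi(a)\|_2 = 0 \qquad \forall a \in \FR{A}.
\]

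Next, since $\FR{A}$ is separable, pick a sequence $\{a_n\}$ that is $\|\cdot\|$-dense in $\FR{A}_{\leq 1}$. For each $n$ let
\[
S_n = \Bigl\{ k \in \mathbb{N} : \|u_k \pi_k(a_j) u_k^* - \pi(a_j)\|_2 < \tfrac{1}{n} \text{ for all } 1 \leq j \leq n \Bigr\}.
\]
Each $S_n$ is a finite intersection of sets in $\mathcal{U}$, hence lies in $\mathcal{U}$, and in particular is infinite. Pick $k_1 < k_2 < \cdots$ with $k_n \in S_n$. For any fixed $j$, the tail $\{k_n : n \geq j\}$ lies in $S_j$ once $n \geq j$, so $u_{k_n} \pi_{k_n}(a_j) u_{k_n}^* \to \pi(a_j)$ in $\|\cdot\|_2$.

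Finally, I would extend this pointwise convergence from the dense sequence $\{a_n\}$ to all of $\FR{A}$ using that $*$-homomorphisms are $\|\cdot\|$-contractive and $\|\cdot\| \geq \|\cdot\|_2$ on $M$, so both $a \mapsto u_k \pi_k(a) u_k^*$ and $\pi$ are uniformly $1$-Lipschitz from $(\FR{A}, \|\cdot\|)$ to $(M, \|\cdot\|_2)$. Setting $\pi'_{k_n} := u_{k_n} \pi_{k_n}(\cdot) u_{k_n}^* \in [\pi_{k_n}]$, we obtain representatives with $\pi'_{k_n}(a) \to \pi(a)$ in $\|\cdot\|_2$ for every $a \in \FR{A}$, which by Definition \ref{metric} is exactly $[\pi_{k_n}] \to [\pi]$ in $\HOM_w(\FR{A}, M)$. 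The only mildly delicate step is the diagonalization via the ultrafilter; everything else is a direct unwinding of definitions.
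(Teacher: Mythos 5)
Your proof is correct and takes essentially the same approach as the paper's: both lift the implementing unitary to a representing sequence of unitaries in $M$, form the nested ultrafilter sets $S_n$ (called $A_j$ in the paper), pick an increasing subsequence from them, and then extend convergence from the dense sequence to all of $\FR{A}$ by a $\|\cdot\|$-to-$\|\cdot\|_2$ Lipschitz/triangle-inequality argument. The only cosmetic difference is that you make explicit the (standard) fact that $U$ admits a representing sequence of unitaries, which the paper takes for granted.
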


\begin{proof}

Let $u = (u_k)_\mathcal{U} \in \mathcal{U}(M^\mathcal{U})$ be such that for every $a \in \FR{A},$ \[ (\pi(a))_\mathcal{U} = (u_k\pi_{k}(a)u_k^*)_\mathcal{U}.\]  Let $\left\{a_i\right\} \subseteq \FR{A}_{\leq 1}$ be dense in the unit ball of $\FR{A}$.  Put \[A_j:= \bigcap_{1\leq i \leq j} \left\{k: ||u_k\pi_{k}(a_i)u_k^* - \pi(a_i)||_2< \frac{1}{j}\right\}.\]  We have $A_j \in \mathcal{U}$ for every $j$.  Pick $k_1 \in A_1$, and for $j>1$, pick $k_j \in A_j \cap \left\{k>k_{j-1}\right\}$.  We claim that $[\pi_{k_j}] \rightarrow [\pi]$ as $j \rightarrow \infty$.  Fix $\varepsilon > 0$.  Let $a \in \FR{A}_{\leq 1}$, and let $i \in \mathbb{N}$ be such that $\ds ||a-a_i||<\frac{\varepsilon}{4}$.  Let $J \in \mathbb{N}$ be such that $i \leq J$ and $\ds \frac{1}{J} < \frac{\varepsilon}{2}$.  Then for $j > J$, since $k_j \in A_j$, we have
\begin{align*}
||u_{k_j}\pi_{k_j}(a)u_{k_j}^* - \pi(a)||_2  \quad \leq& \quad ||u_{k_j}\pi_{k_j}(a)u_{k_j}^* - u_{k_j}\pi_{k_j}(a_i)u_{k_j}^*||_2 +\\
& \quad ||u_{k_j}\pi_{k_j}(a_i)u_{k_j}^*- \pi(a_i)||_2 + ||\pi(a_i) - \pi(a)||_2\\
  <&\quad \frac{\varepsilon}{4} + \frac{1}{j} + \frac{\varepsilon}{4}\\
< &\quad \varepsilon. 
\end{align*}
Since $\operatorname{Ad}(u_{k_j}) \circ \pi_{k_j} \in [\pi_{k_j}]$, this gives $[\pi_{k_j}] \rightarrow [\pi]$.
\end{proof}

\begin{thm}\label{nuctrace}
If $\FR{A}$ is nuclear, then $\alpha_{(\FR{A},M)}$ is an affine homeomorphism for any McDuff $M$.  In particular, $\HOM_w(\FR{A},M)$ is affinely homeomorphic to $T(\FR{A})$.
\end{thm}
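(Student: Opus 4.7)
The map $\alpha := \alpha_{(\FR{A},M)}$ has already been shown to be continuous and affine, so the plan reduces to establishing bijectivity and continuity of $\alpha^{-1}$. For surjectivity, I would fix $\tau \in T(\FR{A})$, form the GNS representation $\pi_\tau: \FR{A} \rightarrow B(H_\tau)$, and set $N_\tau := \pi_\tau(\FR{A})''$, which inherits a canonical faithful normal tracial state extending $\tau$. Nuclearity of $\FR{A}$ forces $N_\tau$ to be injective and hence (by Connes) hyperfinite, so $N_\tau$ embeds trace-preservingly into $R$. Composing $\FR{A} \rightarrow N_\tau \hookrightarrow R \hookrightarrow M$, where the last arrow is $x \mapsto \sigma_M(1_M \otimes x)$ for any regular isomorphism $\sigma_M$, produces a $*$-homomorphism $\pi:\FR{A} \rightarrow M$ with $\tau_M\circ\pi = \tau$.

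For injectivity, suppose $\pi,\rho: \FR{A} \rightarrow M$ satisfy $\tau_M\circ\pi = \tau_M\circ\rho = \tau$. The lifts $\pi^\mathcal{U},\rho^\mathcal{U}:\FR{A} \rightarrow M^\mathcal{U}$ extend uniquely to normal trace-preserving embeddings of $N_\tau$ into $M^\mathcal{U}$ (the extension is well-defined because the kernels of $\pi_\tau$ and $\pi$, and of $\pi_\tau$ and $\rho$, agree by faithfulness of $\tau_M$ on $M$). The key technical input is the uniqueness statement: any two trace-preserving unital normal embeddings of a separable hyperfinite tracial von Neumann algebra into the ultrapower of a separable II$_1$-factor are unitarily equivalent. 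Granting this, $\pi^\mathcal{U}$ and $\rho^\mathcal{U}$ are unitarily equivalent in $M^\mathcal{U}$, and applying Lemma \ref{unitarysubseq} to the constant sequence $\pi_k \equiv \rho$ forces $[\rho] = [\pi]$.

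For continuity of $\alpha^{-1}$, I would show $\HOM_w(\FR{A},M)$ is sequentially compact; a continuous bijection from a compact metric space to a Hausdorff space is then automatically a homeomorphism. Given $\{[\pi_n]\} \subset \HOM_w(\FR{A},M)$, the weak-$*$ compactness of $T(\FR{A})$ yields a subsequence with $\alpha([\pi_{n_k}]) \rightarrow \tau$, and surjectivity provides $[\pi] := \alpha^{-1}(\tau)$. The $*$-homomorphism $\Phi:\FR{A}\rightarrow M^\mathcal{U}$ defined by $\Phi(a) := (\pi_{n_k}(a))_\mathcal{U}$ is well-defined and induces the trace $\tau$, so by the uniqueness theorem again $\Phi$ is unitarily equivalent to $\pi^\mathcal{U}$, and Lemma \ref{unitarysubseq} produces a further subsequence with $[\pi_{n_{k_j}}] \rightarrow [\pi]$.

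The principal obstacle is the invoked uniqueness theorem for trace-preserving embeddings of separable hyperfinite tracial von Neumann algebras into $M^\mathcal{U}$. The standard route expresses $N_\tau$ as an inductive limit of finite-dimensional $*$-subalgebras, uses that unital $*$-homomorphisms of a finite-dimensional $*$-algebra into a II$_1$-factor are determined up to unitary conjugacy by the induced trace, and performs a stage-by-stage $2 \times 2$ intertwining argument inside $M^\mathcal{U}$, exploiting the countable saturation of the ultrapower to extract a single limiting unitary. Every other ingredient of the proof is bookkeeping around this input and the ultrapower lemma already in hand.
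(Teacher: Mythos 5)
Your proposal is correct and hangs on the right technical pillars, but it reorganizes the argument in a way worth noting. The paper handles injectivity by a direct citation to Lemma 3 and Theorem 5 of Ding--Hadwin, which give weak approximate unitary equivalence of trace-equal $*$-homomorphisms out of a nuclear $C^*$-algebra without passing to any ultrapower. You instead route injectivity through the ultrapower: lift $\pi$ and $\rho$ to $\pi^\mathcal{U}$ and $\rho^\mathcal{U}$, invoke the uniqueness (up to unitary conjugacy) of trace-preserving embeddings of separable hyperfinite tracial von Neumann algebras into $M^\mathcal{U}$, and then pull the unitary equivalence back down via Lemma \ref{unitarysubseq} applied to a constant sequence. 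This is a legitimate alternative, and it has the aesthetic advantage of running injectivity and continuity off of exactly the same engine (the ultrapower rigidity result -- the paper cites Corollary 3.4 of Sherman's paper for this -- together with Lemma \ref{unitarysubseq}). What it costs you is that you are essentially re-deriving the Ding--Hadwin conclusion by hand, and your sketch of the uniqueness theorem via Elliott-style intertwining plus countable saturation, while morally right, is more machinery than the paper chooses to deploy. Your surjectivity argument (hyperfinite GNS, trace-preserving embedding into $R$, compose with $\sigma_M(1_M \otimes \cdot)$) is identical to the paper's. For bicontinuity, the paper directly verifies that every subsequence of $\{[\pi_n]\}$ has a sub-subsequence converging to $[\pi]$, whereas you first establish sequential compactness of $\HOM_w(\FR{A},M)$ and then invoke the compact-to-Hausdorff fact; these are the same argument in different dress, since your compactness proof and the paper's sub-subsequence proof run through the identical ultrapower extraction. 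One small point of care: your compactness framing uses surjectivity as a lemma (to produce $[\pi] := \alpha^{-1}(\tau)$), so be sure the exposition establishes surjectivity first -- otherwise the compactness claim looks dangerously close to circular, since in general $\HOM_w(\FR{A},M)$ is not compact (see Example \ref{surjfail}); it is precisely nuclearity that rescues compactness here, and your argument does in fact only use what nuclearity gives you.
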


\begin{proof}
\textbf{Injective:} When $\FR{A}$ is nuclear we have
	\begin{align}
		(\alpha([\pi]) = \alpha([\rho])) &\Leftrightarrow  (\tau \circ \pi = \tau \circ \rho) \notag\\
		& \Leftrightarrow  ([\pi] = [\rho]) \label{thm5}
	\end{align}

\noindent Here \eqref{thm5} follows from Lemma 3 and Theorem 5 of \cite{ding-hadwin}.  

\noindent \textbf{Surjective:}  It is well known that in the nuclear case, every trace gives a hyperfinite GNS closure.  So every trace lifts through $R\subset M$.  Thus $\alpha_{(\FR{A},M)}$ is surjective.

\noindent \textbf{Bicontinuous:}  Let $T_n \rightarrow T$ in $T(\FR{A})$. Let $[\pi_n] = \alpha_{(\FR{A},M)}^{-1} (T_n)$ and $[\pi] = \alpha_{(\FR{A},M)}^{-1}(T)$.  We must show that $[\pi_n] \rightarrow [\pi]$.  We will show this by appealing to the following standard topological fact: for a sequence $\left\{a_n\right\}$, if every subsequence $\left\{a_{n(k)}\right\}$ has a sub-subsequence $\left\{a_{n(k_j)}\right\}$ converging to $a$, then $\left\{a_n\right\}$ converges to $a$. Let $\left\{[\pi_{n(k)}]\right\}$ be a subsequence of $\left\{[\pi_n]\right\}$. Now consider the homomorphism $(\pi_{n(k)})_\mathcal{U}: \FR{A} \rightarrow M^\mathcal{U}$ where $\mathcal{U}$ is a free ultrafilter on $\mathbb{N}$.  By the convergence of the induced traces, we get that \[\tau_{M^\mathcal{U}} \circ (\pi_{n(k)})_\mathcal{U} = \tau_{M^\mathcal{U}} \circ \pi^\mathcal{U}.\]  And by Corollary 3.4 of \cite{autoultra}, we get that $(\pi_{n(k)})_\mathcal{U}$ is unitarily equivalent to $\pi^\mathcal{U}$.  So by Lemma \ref{unitarysubseq}, there is a sub-subsequence $\left\{n(k_j)\right\}$ such that $[\pi_{n(k_j)}]\rightarrow [\pi]$.
\end{proof}


\begin{exmpl}\label{blackadar}
Theorem 3.10 of \cite{blackadar} says that for any metrizable Choquet simplex $\Delta$, there exists a simple unital AF algebra $\FR{B}$ such that $T(\FR{B})$ is affinely homeomorphic to $\Delta.$  Combining this fantastic result with Theorem \ref{nuctrace} tells us that any (separable) metrizable Choquet simplex $\Delta$ can arise as $\HOM_w(\FR{B},M)$ for some (separable) $\FR{B}$.
\end{exmpl}

We now work to characterize the algebras $\FR{A}$ for which $\alpha_{(\FR{A},M)}$ is an affine homeomorphism for every McDuff $M$.  

\begin{dfn}
Let \[T(\FR{A},M) := \left\{T \in T(\FR{A}) : \text{ there exists } \pi: \FR{A} \rightarrow M \text{ such that }  T = \tau_{M} \circ \pi\right\}.\]  For $T \in T(\FR{A},M)$, we say \textquotedblleft $T$ lifts through $M$.\textquotedblright
\end{dfn}

\begin{thm}\label{injR}
For any $\FR{A}$, the map $\alpha_{(\FR{A},R)}: \HOM_w(\FR{A},R) \rightarrow T(\FR{A})$ is always a homeomorphism onto its image.  In particular, $\HOM_w(\FR{A},R) \cong T(\FR{A},R)$ (affine homeomorphism). 
\end{thm}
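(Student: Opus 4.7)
The plan is to adapt the proof of Theorem \ref{nuctrace}, replacing the nuclearity of $\FR{A}$ with the following observation, valid for any unital $*$-homomorphism $\pi:\FR{A}\to R$: the von Neumann algebra $\pi(\FR{A})''$ is a subalgebra of $R$, inherits injectivity via the trace-preserving conditional expectation $R\to \pi(\FR{A})''$, and is therefore hyperfinite by Connes' theorem. This plays the role that nuclearity of $\FR{A}$ played in Theorem \ref{nuctrace} of forcing the relevant GNS closures to be hyperfinite. The image of $\alpha_{(\FR{A},R)}$ is $T(\FR{A},R)$ by definition of the latter, so once the map is shown to be a homeomorphism onto its image the ``in particular'' clause is immediate.

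For injectivity I would run the same two-line argument as in the proof of Theorem \ref{nuctrace}: if $\tau_R\circ \pi = \tau_R\circ \rho$, then because $\pi(\FR{A})''$ and $\rho(\FR{A})''$ are both hyperfinite, Lemma 3 and Theorem 5 of \cite{ding-hadwin} apply and give $[\pi]=[\rho]$.

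For bicontinuity, continuity of $\alpha_{(\FR{A},R)}$ is automatic, so only the continuity of the inverse needs work. Given $T_n \to T$ in $T(\FR{A},R)$ with lifts $[\pi_n],[\pi]$, I would follow the subsequence template from Theorem \ref{nuctrace}: fix an arbitrary subsequence $[\pi_{n(k)}]$ and pass to the ultrapower homomorphisms $(\pi_{n(k)})_\mathcal{U}, \pi^\mathcal{U}: \FR{A}\to R^\mathcal{U}$, each of which induces the trace $T$ on $\FR{A}$. Because the $||\cdot||_2$-closure of $\pi(\FR{A})$ in $R^\mathcal{U}$ agrees with its $||\cdot||_2$-closure in $R$ (as $R$ is $||\cdot||_2$-closed in $R^\mathcal{U}$), the double commutant $\pi^\mathcal{U}(\FR{A})''$ taken inside $R^\mathcal{U}$ coincides with the hyperfinite $\pi(\FR{A})''\subset R$. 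This hyperfiniteness is exactly what is needed to apply Corollary 3.4 of \cite{autoultra} and conclude that $(\pi_{n(k)})_\mathcal{U}$ is unitarily equivalent to $\pi^\mathcal{U}$; Lemma \ref{unitarysubseq} then extracts a sub-subsequence with $[\pi_{n(k_j)}]\to [\pi]$, and the standard subsequence principle finishes the argument.

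The main obstacle I anticipate is precisely confirming that the two cited results — the Ding--Hadwin characterization of weak approximate unitary equivalence, and Corollary 3.4 of \cite{autoultra} — really apply in this setting. Both were invoked in Theorem \ref{nuctrace} under nuclearity of the domain, but their effective hypothesis is hyperfiniteness of the relevant image or ultrapower closure. Here that hyperfiniteness is supplied not by the domain $\FR{A}$ but by the target $R$, so the only genuine technical check is this interchange of where the hyperfiniteness comes from; everything else is formally parallel to the nuclear case.
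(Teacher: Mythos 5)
Your overall architecture (injectivity plus a subsequence/ultrapower argument for bicontinuity using Lemma \ref{unitarysubseq}) is the same as the paper's, and you have correctly located the crux: everything hinges on hyperfiniteness of $\pi(\FR{A})''$ rather than nuclearity of $\FR{A}$. But you have also correctly identified where your proof is thin, and that thin spot is in fact the whole content of the paper's proof of injectivity.

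For injectivity, the paper does \emph{not} cite Ding--Hadwin. It instead constructs the trace-preserving $*$-isomorphism $\varphi: W^*(\pi(\FR{A})) \to W^*(\rho(\FR{A}))$ determined by $\varphi(\pi(a))=\rho(a)$, exhausts the hyperfinite algebra $W^*(\pi(\FR{A}))$ by an increasing sequence of finite-dimensional subalgebras $B_n$, observes that $\varphi|_{B_n}$ and $\iota_n=\mathrm{id}_{B_n}$ are (approximately) unitarily equivalent because trace-preserving embeddings of a finite-dimensional algebra into a II$_1$-factor are unitarily conjugate, and then uses Kaplansky density with a $3\varepsilon$ estimate to conclude $\mathrm{id}_{W^*(\pi(\FR{A}))}\sim\varphi$. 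That self-contained argument is precisely the ``hyperfinite-image version'' of the Ding--Hadwin conclusion you want to invoke. The paper's choice to reprove it rather than cite it is significant: the cited Ding--Hadwin results are quoted in Theorem \ref{nuctrace} \emph{under nuclearity of the domain}, and you cannot simply swap that hypothesis for hyperfiniteness of the image without going back to the source. As written, your injectivity step is a citation to a result that (so far as the paper indicates) does not cover this case, and what needs to be supplied is exactly the Kaplansky/finite-dimensional-approximation argument the paper gives.

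For bicontinuity, your route is a valid variant. You correctly note $W^*(\pi^\mathcal{U}(\FR{A}))=W^*(\pi(\FR{A}))\subset R\subset R^\mathcal{U}$ is hyperfinite, and GNS-uniqueness gives the same for $W^*((\pi_{n(k)})_\mathcal{U}(\FR{A}))$. You then want to apply Corollary 3.4 of \cite{autoultra} directly, as Theorem \ref{nuctrace} does. The paper instead invokes Jung's Lemma 2.9 to place both hyperfinite GNS images inside one fixed copy of $R\subset R^\mathcal{U}$, applies the just-proved injectivity of $\alpha_{(\FR{A},R)}$ to get weak approximate unitary equivalence in $R$, and then upgrades to unitary equivalence in $R^\mathcal{U}$ by Theorem 3.1 of \cite{autoultra}. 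The two routes are close in spirit, but the paper's has the advantage of running entirely through results already established earlier in the proof, sidestepping any question about the precise hypotheses of Corollary 3.4. Either works, provided the injectivity step above is made rigorous first (the paper's bicontinuity argument actually uses that injectivity, so the logical dependency is real).

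In short: the shape of your proof is right and your diagnosis of the key obstacle is correct, but the injectivity step needs the direct finite-dimensional approximation argument rather than a citation; as proposed it has a genuine gap.
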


\begin{proof}
Let $\pi,\rho: \FR{A}\rightarrow R$ be unital $*$-homomorphisms such that $\alpha_{(\FR{A},R)}([\pi]) = \alpha_{(\FR{A},R)}([\rho])$.  We will show that $[\pi]=[\rho]$.  

Consider the following map \[\varphi: W^*(\pi(\FR{A})) \rightarrow W^*(\rho(\FR{A}))\] densely defined by \[\varphi(\pi(a)) = \rho(a).\]  The assumption that $\tau \circ \pi = \tau \circ \rho$ gives that all the $*$-moments in $\pi(\FR{A})$ agree with those of $\rho(\FR{A})$.  So this is a well-defined $*$-homomorphism.  Now $W^*(\pi(\FR{A}))\subseteq R$ is hyperfinite.  So let $\ds \left\{B_n\right\}_{n=1}^{\infty}$ be an increasing sequence of finite dimensional algebras such that $\ds W^*(\pi(\FR{A})) = W^*(\cup_{n=1}^{\infty} B_n)$.  Now define \[\varphi_n := \varphi|_{B_n},\] and let \[\iota_n: B_n\rightarrow B_n\] be the identity on $B_n$.  Observe that $\tau \circ \varphi_n = \tau \circ \iota_n$ on $B_n$.  So by Theorem \ref{nuctrace} we have that 
\begin{equation}\label{fidiequiv}
\varphi_n \sim \iota_n.
\end{equation}
(In fact, $\varphi_n$ and $\iota_n$ are unitarily equivalent; but we will not need this.)

We will now use the equivalent formulation of weak approximate unitary equivalence as given in the introduction to complete the proof.  Let $a_1, \dots, a_k \in$\linebreak  $W^*(\pi(\FR{A}))_{\leq 1}$ and $\varepsilon > 0$ be given.  By Kaplansky density, there is a $K$ such that there are $b_1, \dots, b_k \in (B_K)_{\leq 1}$  with $||a_j - b_j||_2 < \frac{\varepsilon}{3}$ for all $1\leq j \leq k$.  And by equality of $*$-moments, we have as a consequence that $||\varphi(a_j) - \varphi(b_j)||_2 \leq \frac{\varepsilon}{3}$.  Now by \eqref{fidiequiv} there is a unitary $u \in\mathcal{U}(R)$ such that \[||\iota_K(b_j) - u \varphi_K(b_j)u^*||_2 = ||b_j - u\varphi_K(b_j)u^*||_2 < \frac{\varepsilon}{3} \quad \forall \quad  1 \leq j \leq k.\]  Thus we have for every $1\leq j \leq k,$
\begin{align*}
||a_j - u \varphi(a_j)u^*||_2   \quad\leq&\quad   ||a_j - b_j||_2 + ||b_j - u\varphi_K(b_j)u^*||_2 \;+\\
& \quad  ||u\varphi_K(b_j)u^* - u \varphi(a_j)u^*||_2\\
 =&  \quad ||a_j - b_j||_2 + ||b_j - u\varphi_K(b_j)u^*||_2 + ||\varphi(b_j) -  \varphi(a_j)||_2\\
 < & \quad \frac{\varepsilon}{3} + \frac{\varepsilon}{3} + \frac{\varepsilon}{3}\\
 =&  \quad \varepsilon.
\end{align*}

So $\text{id}_{W^*(\pi(\FR{A}))} \sim \varphi$. Thus $[\pi] = [\rho]$, and $\alpha_{(\FR{A},R)}$ is injective.  

It remains to show that $\alpha_{(\FR{A},R)}^{-1}$ is continuous on $T(\FR{A},R)$. We proceed similarly to the bicontinuous part of the proof of Theorem \ref{nuctrace}.  Let $T_n \rightarrow T$ in $T(\FR{A},R)$ in the weak-$*$ sense.  And let $\pi_n,\pi: \FR{A} \rightarrow R$ be such that $\tau \circ \pi_n = T_n$ and $\tau \circ \pi = T$.  Let $\left\{n(k)\right\}$ be a subsequence.  Consider $(\pi_{n(k)})_\mathcal{U}$ and $\pi^\mathcal{U}$ for a free ultrafilter $\mathcal{U}$ of $\mathbb{N}$.  By assumption, $\tau_{R^\mathcal{U}} \circ (\pi_{n(k)})_\mathcal{U} = T$.  So by the uniqueness of the GNS construction, $W^*((\pi_{n(k)})_\mathcal{U}(\FR{A})) \cong W^*(\pi^\mathcal{U}(\FR{A}))$ is hyperfinite.  Let $R_1,R_2 \subset R^\mathcal{U}$ be copies of $R$ such that 
\[(\pi_{n(k)})_\mathcal{U}: \FR{A} \rightarrow R_1 \subset R^\mathcal{U}\]
and
\[\pi^\mathcal{U}: \FR{A} \rightarrow R_2\subset R^\mathcal{U}.\]
We can assume without loss of generality that $R_1=R_2=R$ (see Lemma 2.9 of \cite{jung}).  

So we have two $*$-homomorphisms
\[(\pi_{n(k)})_\mathcal{U}: \FR{A} \rightarrow R\]
and
\[\pi^\mathcal{U}: \FR{A} \rightarrow R\]
such that $\tau_R \circ (\pi_{n(k)})_\mathcal{U} = \tau_R \circ \pi^\mathcal{U}$, or $\alpha_{(\FR{A},R)}((\pi_{n(k)})_\mathcal{U}) = \alpha_{(\FR{A},R)}(\pi^\mathcal{U})$.  Since we have from above that $\alpha_{(\FR{A},R)}$ injective, we can conclude $(\pi_{n(k)})_\mathcal{U}$ is weakly approximately unitarily equivalent to $\pi^\mathcal{U}$ in $R \subset R^\mathcal{U}$.  And by Theorem 3.1 of \cite{autoultra} we have that $(\pi_{n(k)})_\mathcal{U}$ is unitarily equivalent to $\pi^\mathcal{U}$.  Then Lemma \ref{unitarysubseq} tells us that there is a sub-subsequence $\left\{n(k_j)\right\}$ such that $[\pi_{n(k_j)}]\rightarrow [\pi]$.  So we have shown that for any subsequence $\left\{[\pi_{n(k)}]\right\}$ of $\left\{[\pi_n]\right\}$ there is a sub-subsequence $\left\{[\pi_{n(k_j)}]\right\}\subset \left\{[\pi_{n(k)}]\right\}$ such that $[\pi_{n(k_j)}] \rightarrow [\pi]$.  Thus \[\alpha_{(\FR{A},R)}^{-1}(T_n) = [\pi_n] \rightarrow [\pi] = \alpha_{(\FR{A},R)}^{-1}(T).\qedhere\]
\end{proof}

\begin{dfn}[Definition 3.2.1, \cite{invar}]
A trace $T \in T(\FR{A})$ is called \emph{uniform amenable} if there exists a sequence of unital completely positive maps $\varphi_n: \FR{A} \rightarrow \mathbb{M}_{k(n)}$ such that 
\[\lim_n ||\varphi_n(ab) - \varphi_n(a)\varphi_n(b)||_2 =0\] for all $a,b \in \FR{A}$, and
\[\lim_n ||T - \text{tr}_{k(n)} \circ \varphi_n||_{\FR{A}^*}=0\] 
where $||\cdot||_{\FR{A}^*}$ is the natural norm on the dual of $\FR{A}$.  Let $\text{UAT}(\FR{A})$ denote the set of all such traces.
\end{dfn}

From Theorem \ref{injR} we have that $\HOM_w(\FR{A},R) \cong T(\FR{A},R)$ as convex sets.  And by (5) of Theorem 3.2.2 of \cite{invar} it follows that $T(\FR{A},R) = \text{UAT}(\FR{A})$. We can now give our characterization theorem.

\begin{thm}\label{affhomchar} 
The following are equivalent:
\begin{enumerate}

	\item $\alpha_{(\FR{A},M)}$ is an affine homeomorphism for every McDuff $M$;

	\item $\alpha_{(\FR{A},R)}$ is an affine homeomorphism;

	\item $\alpha_{(\FR{A},R)}$ is surjective;

	\item UAT$(\FR{A}) = T(\FR{A},R) = T(\FR{A})$.

\end{enumerate}

\end{thm}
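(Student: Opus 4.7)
The plan is to close the cycle $(1)\Rightarrow(2)\Rightarrow(3)\Rightarrow(4)\Rightarrow(1)$. The first two implications are immediate: $R$ is a McDuff factor (since $R\otimes R\cong R$), so specializing (1) to $M=R$ gives (2), and an affine homeomorphism onto $T(\FR{A})$ is in particular surjective. For $(3)\Rightarrow(2)$, Theorem \ref{injR} asserts that $\alpha_{(\FR{A},R)}$ is always an affine homeomorphism onto its image $T(\FR{A},R)$, so surjectivity forces $T(\FR{A},R)=T(\FR{A})$, upgrading it to an affine homeomorphism onto all of $T(\FR{A})$. The equivalence $(3)\Leftrightarrow(4)$ then follows from the identification $T(\FR{A},R)=\text{UAT}(\FR{A})$ recorded just above the theorem.

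The substantive step is $(4)\Rightarrow(1)$. Fix a McDuff $M$ and treat surjectivity, injectivity, and bicontinuity separately. \textbf{Surjectivity} is cheap: any McDuff $M$ contains a copy of $R$ via $x\mapsto \sigma_M(1_M\otimes x)$, so a lift of $T$ through $R$ composes to a lift through $M$. \textbf{Injectivity} will mirror the proof of Theorem \ref{injR}. Suppose $\tau_M\circ\pi=\tau_M\circ\rho$ and define $\varphi\colon W^*(\pi(\FR{A}))\to W^*(\rho(\FR{A}))$ by $\varphi(\pi(a))=\rho(a)$; equality of all $*$-moments makes this a well-defined $*$-isomorphism. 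Under (4), the trace $\tau_M\circ\pi$ is UAT, so its GNS closure—which is isomorphic to $W^*(\pi(\FR{A}))$—is hyperfinite. Pick an increasing sequence of finite-dimensional subalgebras $B_n$ whose weak-closed union is $W^*(\pi(\FR{A}))$. For each $n$, $\varphi|_{B_n}$ and $\iota_{B_n}$ are two $*$-homomorphisms of a finite-dimensional (hence nuclear) algebra into $M$ inducing the same trace, so by Theorem \ref{nuctrace} they are weakly approximately unitarily equivalent in $M$. A Kaplansky density plus $\varepsilon/3$ argument verbatim from the proof of Theorem \ref{injR} then yields $\operatorname{id}_{W^*(\pi(\FR{A}))}\sim\varphi$, giving $[\pi]=[\rho]$.

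\textbf{Bicontinuity} will use the sub-subsequence trick. Given $T_n\to T$ in $T(\FR{A})$ with preimages $[\pi_n]$ and $[\pi]$, fix a subsequence $\{T_{n(k)}\}$ and consider the induced ultrapower maps $(\pi_{n(k)})_\mathcal{U},\pi^\mathcal{U}\colon\FR{A}\to M^\mathcal{U}$. These share the same trace $\tau_{M^\mathcal{U}}\circ\pi^\mathcal{U}$, which is still UAT, so its GNS closure is hyperfinite. Corollary 3.4 of \cite{autoultra} then provides a unitary in $M^\mathcal{U}$ conjugating $(\pi_{n(k)})_\mathcal{U}$ to $\pi^\mathcal{U}$, and Lemma \ref{unitarysubseq} extracts a sub-subsequence with $[\pi_{n(k_j)}]\to[\pi]$. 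The standard topological principle quoted in Theorem \ref{nuctrace} closes the argument.

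The main obstacle is transplanting the finite-dimensional approximation argument of Theorem \ref{injR} from $R$ to an arbitrary McDuff $M$: in $R$ every von Neumann subalgebra is automatically hyperfinite, and we no longer have that luxury. The UAT hypothesis (4) is precisely what restores hyperfiniteness of $W^*(\pi(\FR{A}))$ and makes the sequence $\{B_n\}$ available. Once that obstruction is removed, the rest of the argument is parallel to what has already been done in this section, so no genuinely new technical input is required.
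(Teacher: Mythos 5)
Your proof is correct and follows essentially the same route as the paper: reduce to $(4)\Rightarrow(1)$, reuse Theorem \ref{injR} for $(3)\Leftrightarrow(4)$, and for $(4)\Rightarrow(1)$ treat surjectivity, injectivity, and bicontinuity separately by exploiting the hyperfiniteness of $W^*(\pi(\FR{A}))$ supplied by the UAT hypothesis. Where the paper's own $(4)\Rightarrow(1)$ step is terse — citing ``Theorem \ref{injR}'' for injectivity and deferring bicontinuity to ``an argument identical to'' that theorem's — you have usefully unpacked exactly what is being borrowed: the finite-dimensional approximation plus Kaplansky/$\varepsilon/3$ argument for injectivity, and the sub-subsequence trick plus Lemma \ref{unitarysubseq} for bicontinuity. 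Your bicontinuity step invokes Corollary 3.4 of \cite{autoultra} directly (as in the proof of Theorem \ref{nuctrace}) rather than replaying Theorem \ref{injR}'s longer path through Jung's Lemma 2.9, the established injectivity, and Theorem 3.1 of \cite{autoultra}; since the only hypothesis needed is that the shared trace has a hyperfinite GNS closure — which (4) provides — this shortcut is legitimate and in fact a bit cleaner. The only cosmetic quibble is that your cycle $(1)\Rightarrow(2)\Rightarrow(3)\Rightarrow(4)\Rightarrow(1)$ makes the separate argument for $(3)\Rightarrow(2)$ redundant, but that is harmless.
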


\begin{proof}
The implications $((1)\Rightarrow (2))$ and $((2)\Rightarrow(3))$ are obviously true.

The observation that $\alpha_{(\FR{A},R)}(\HOM_w(\FR{A},R)) = T(\FR{A},R) = \text{UAT}(\FR{A})$ shows the equivalence $((3)\Leftrightarrow(4))$.

It remains to show $((4)\Rightarrow (1))$: If $M$ is such that $\alpha_{(\FR{A},M)}$ is not injective, then there are homomorphisms \[\pi,\rho: \FR{A}\rightarrow M\] such that $\tau_M\circ \pi = \tau_M\circ \rho$ but $[\pi]\neq [\rho]$.  Then by Theorem \ref{injR} we have that $\tau\circ \pi \notin T(\FR{A},R)$ -- a contradiction.  So $\alpha_{(\FR{A},M)}$ must be injective for every $M$.  If $M$ is such that $\alpha_{(\FR{A},M)}$ is not surjective then there exists $T \in T(\FR{A})$ such that $T$ does not lift through $M$; thus $T$ cannot lift through $R$ either.  So again $T(\FR{A},R) \neq T(\FR{A})$ -- a contradiction.  So $\alpha_{(\FR{A},M)}$ must be bijective for every $M$.  To show that $\alpha_{(\FR{A},M)}^{-1}$ is continuous for every $M$, we use the assumption that $T(\FR{A}) = T(\FR{A},R)$ along with an argument identical to the justification of the continuity of $\alpha_{(\FR{A},R)}^{-1}$ in the proof of Theorem \ref{injR}.
\end{proof}

\noindent Thus, the class of algebras $\FR{A}$ for which $\alpha_{(\FR{A},M)}$ is an affine homeomorphism for all McDuff $M$ is exactly the class of all $\FR{A}$ such that for any trace $T \in T(\FR{A})$, the GNS representation of $\FR{A}$ induced by $T$ has a hyperfinite von Neumann closure. 

\begin{exmpl}
The above class of algebras is strictly larger than the class of nuclear algebras.  Dadarlat's example of a non-nuclear subalgebra of an AF-algebra in \cite{dadarlat} is an example of a non-nuclear algebra whose tracial GNS representations are hyperfinite.
\end{exmpl}

%

\subsection{Examples}\label{examples}

By Theorem \ref{nuctrace} we know that $\alpha_{(\FR{A},M)}$ can be both injective and surjective.  The following examples will demonstrate that the other three cases where one or both properties fail are possible.  This suggests that $\HOM_w(\FR{A},M)$ is a rich object with deep and interesting subtleties.

\begin{exmpl}\label{injfail}
\textbf{\tql Forgetful Trace.\tqr}This example shows that $\alpha_{(\FR{A},M)}$ is not always injective -- confirming that $\HOM_w(\FR{A},M)$ carries information different from that of $T(\FR{A})$.  The strategy of this example also reveals the usefulness of non-approximately inner automorphisms of McDuff factors.  Let $M$ be a McDuff factor with a non-approximately inner automorphism $\varphi$ (e.g. $\ds M = \otimes_{\mathbb{Z}} L(\mathbb{F}_2)$ satisfies this property as mentioned in Remark \ref{sakairmk}).  Let $\FR{A}$ be a separable, $||\cdot||_2$-dense $C^*$-subalgebra of $M$.  Let $\pi: \FR{A}\rightarrow M$ be the identity inclusion, and let $\rho = \varphi \circ \pi$.  Then we claim \[\tau_M \circ \rho = \tau_M \circ \pi,\] but \[[\pi] \neq [\rho].\]  Since $\tau_M \circ \varphi$ is also a trace on the II$_1$ factor $M$, we have $\tau_M = \tau_M \circ \varphi$.  Then $\tau_M(\pi(a)) = \tau_M(\varphi(\pi(a))) = \tau_M(\rho(a))$.  Now suppose for the sake of contradiction that $[\pi] = [\rho]$.  Then there is a sequence of unitaries $\left\{u_n\right\}$ in $M$ such that for every $a \in \FR{A}$ we have \[\lim_n ||\varphi(\pi(a)) - u_n\pi(a)u_n^*||_2  = \lim_n ||\varphi(a) - u_nau_n^*||_2= 0.\]  Then the $||\cdot||_2$-density of $\pi(\FR{A})=\FR{A} \subset M$ implies that for every $x \in M$ we have \[\lim_n||\varphi(x) - u_n x u_n^*||_2 = 0\] meaning that $\varphi$ is approximately inner -- a contradiction.

If we further insist that $\FR{A}$ has a unique trace (by throwing in enough unitaries via Dixmier approximation -- see Lemma \ref{sepsubfac}), then this is an example of $\alpha_{(\FR{A},M)}$ failing to be injective while remaining surjective.
\end{exmpl}

\begin{exmpl}\label{surjfail}
By Proposition 3.5.1 of \cite{invar} we have that $T(\FR{A},R)$ is a weakly closed subset of $T(\FR{A})$.  It is not true in general, however, that $T(\FR{A},R)$ is weak-$*$ closed in $T(\FR{A})$.  By Remark 4.1.7 of \cite{invar} if $\Gamma$ is a non-amenable, residually finite, discrete group (e.g. $\mathbb{F}_n$) then the $T(C^*(\Gamma),R)$ is not weak-$*$ closed. So for $\FR{A} = C^*(\Gamma)$ where $\Gamma$ is a non-amenable, residually finite, discrete group, we have that $\alpha_{(\FR{A},R)}$ fails to be surjective while remaining injective. Furthermore, $\HOM_w(\FR{A},R)$ is not compact: if it were, then by continuity $\alpha_{(\FR{A},R)}(\HOM_w(\FR{A},R))=T(\FR{A},R) = \text{UAT}(\FR{A})$ would also be weak-$*$ compact and thus weak-$*$ closed -- a contradiction.
\end{exmpl}

We will need the following lemma for the next example.

\begin{lem}\label{sepsubfac}
If $Y$ is a separable von Neumann subalgebra of a II$_1$ factor $X$ with $1_Y = 1_X$, then there is a separable II$_1$ factor $M$ contained unitally in $X$ that contains $Y$: $Y \subset M \subset X$.
\end{lem}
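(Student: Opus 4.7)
The plan is a standard Löwenheim--Skolem style construction: build $M$ as the weak closure of an increasing union of separable von Neumann subalgebras of $X$, where at each stage we add enough unitaries from $X$ to eventually force the center of the limit to be trivial.

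First, to guarantee that the eventual $M$ is type II$_1$ rather than a matrix algebra, I would replace $Y$ at the outset by $W^*(Y, R_0)$, where $R_0 \cong R$ is a unital copy of the hyperfinite II$_1$ factor inside $X$ (built from a dyadic sequence of matrix subalgebras coming from the type II$_1$ structure of $X$). This enlarged algebra is still separable, and any factor containing it must be II$_1$.

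Now set $Y_0 := W^*(Y, R_0)$, and inductively construct a chain $Y_0 \subseteq Y_1 \subseteq \cdots$ as follows. Given $Y_n$ separable, choose a countable $\|\cdot\|_2$-dense subset $\{y_k^{(n)}\}$ of the unit ball of $Y_n$. Since $X$ is a factor, Dixmier's averaging theorem applied inside $X$ gives, for each $k$ and each $j \geq 1$, finitely many unitaries $u_{k,j,1}^{(n)}, \dots, u_{k,j,m(k,j,n)}^{(n)} \in \mathcal{U}(X)$ and positive weights summing to $1$ such that the corresponding convex combination of conjugates of $y_k^{(n)}$ lies within $\|\cdot\|_2$-distance $1/j$ of $\tau_X(y_k^{(n)})\,1$. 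Let $Y_{n+1}$ be the von Neumann subalgebra of $X$ generated by $Y_n$ together with all these countably many unitaries; it is separable. Define
\[
M := \Bigl( \bigcup_{n \geq 0} Y_n \Bigr)''\subseteq X.
\]
A countable $\|\cdot\|_2$-dense subset of the unit ball of $M$ is obtained by pooling such subsets from each $Y_n$ and invoking Kaplansky density, so $M$ is separable, and clearly $Y \subseteq M \subseteq X$ with $1_M = 1_X$.

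The remaining task, which is the only nontrivial point, is to show $M$ is a factor. Let $x \in M$ with $\|x\| \leq 1$. By Kaplansky density pick $x_n \in Y_n$ with $\|x_n\| \leq 1$ and $\|x_n - x\|_2 \to 0$. By construction of $Y_{n+1}$, there is a convex combination $\sum_i t_i^{(n)} u_i^{(n)} x_n (u_i^{(n)})^*$ with unitaries $u_i^{(n)} \in Y_{n+1} \subseteq M$ lying within $\|\cdot\|_2$-distance $1/n$ of $\tau_X(x_n)\,1$. A three-term triangle inequality, using that unitary conjugation is a $\|\cdot\|_2$-isometry and that $|\tau_X(x_n) - \tau_X(x)| \leq \|x_n - x\|_2 \to 0$, shows that the analogous convex combination of $u_i^{(n)} x (u_i^{(n)})^*$ converges to $\tau_X(x)\,1$ in $\|\cdot\|_2$. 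Hence $\tau_X(x)\,1$ lies in the $\|\cdot\|_2$-closed convex hull of the $\mathcal{U}(M)$-orbit of $x$; by Dixmier's theorem applied inside $M$, this closed convex hull meets $Z(M)$ in the single point $E_{Z(M)}(x)$, so $E_{Z(M)}(x) = \tau_X(x)\,1 \in \mathbb{C}\,1$. Therefore $Z(M) = \mathbb{C}\,1$, and since $M$ is finite and contains the copy $R_0$ of $R$ (hence is infinite-dimensional), $M$ is a separable II$_1$ factor. The main (and really only) obstacle is this last center-killing step; everything else is routine bookkeeping.
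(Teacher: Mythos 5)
Your proof is correct and follows essentially the same route as the paper: recursively enlarge $Y$ by adjoining, at each stage, the finitely many unitaries of $X$ produced by Dixmier averaging for a countable dense family, take $M$ to be the weak closure of the union, and then use the accumulated Dixmier data to kill the center of $M$. The paper phrases the final step as ``any tracial state on $M$ agrees with $\tau_X|_M$, hence $M$ is a factor,'' whereas you argue directly that $E_{Z(M)}(x)=\tau_X(x)1$; these are interchangeable, though when you invoke ``Dixmier's theorem applied inside $M$'' for the $\|\cdot\|_2$-closed hull rather than the norm-closed hull you are implicitly also using $\|\cdot\|_2$-continuity of $E_{Z(M)}$ (easy, but worth saying, and the trace-uniqueness phrasing avoids the issue entirely). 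One genuine improvement in your write-up: by first replacing $Y$ with $W^*(Y,R_0)$ for a unital copy $R_0\cong R$ inside $X$, you ensure the resulting factor is infinite-dimensional and hence II$_1$; the paper's proof as written does not address this (if, say, $Y=\mathbb{C}1$ the construction could terminate at $M=\mathbb{C}$), although in every application in the paper $Y$ already contains a II$_1$ factor, so the gap never actually bites.
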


\begin{proof}
This proof will heavily rely on Dixmier's approximation theorem:  For $N$ a finite von Neumann algebra and $x \in N$ then we have \[\overline{\text{conv}}\left\{uxu^*|u \in \mathcal{U}(N)\right\} \cap \mathcal{Z}(N) = \left\{T(x)\right\}\] where $T$ is the unique center-valued trace, and the closure is in the norm topology.

We will recursively construct an increasing sequence $\left\{Y_i\right\}_{i=0}^{\infty}$ of separable subalgebras of $X$ and claim that $M = W^*(\cup_{i=0}^{\infty} Y_i)$ is the desired factor.  Let $Y_0 = Y$.   We will assume that $Y_i$ has been constructed and go about constructing $Y_{i+1}$.  Let $\left\{y(i,j)\right\}_{j=1}^{\infty} \subset (Y_i)_{\leq 1}$ be weakly dense in $(Y_i)_{\leq 1}$.  By Dixmier, for any $j$ and for any $k \in \mathbb{N}$ there are unitaries $u_1(i,j,k),\dots, u_{n(i,j,k)}(i,j,k) \in \mathcal{U}(X)$ and scalars $\alpha_1(i,j,k),\dots, \alpha_{n(i,j,k)}(i,j,k) \in (0,1)$ with $\sum_{p=1}^{n(i,j,k)} \alpha_p(i,j,k) = 1$ such that \[\Big|\Big|\sum_{p=1}^{n(i,j,k)} \alpha_p(i,j,k)u_p(i,j,k)y(i,j)u_p(i,j,k)^* - \tau(y(i,j))I\Big|\Big|< \frac{1}{k}.\]  Then we let \[Y_{i+1} = W^*(Y_i \cup (\cup_{j=1}^{\infty} \cup_{k=1}^{\infty} \left\{u_1(i,j,k),\dots,u_{n(i,j,k)}(i,j,k)\right\})).\]

Let $M := W^*(\cup_{i=0}^{\infty} Y_i)$.  To show that $M$ is a factor we will show that it has a unique unital trace (given by restriction of the unital trace $\tau$ on $X$).  Let $T$ be a unital trace on $M$ and let $m \in M_{\leq 1}$. It will suffice to show that for any $\varepsilon >0, |T(m) - \tau(m)| < \varepsilon$.  Fix $\varepsilon > 0$.  Let $K \in \mathbb{N}$ be such that $\frac{1}{K} < \varepsilon$.  And let $i(m),j(m)$ be such that $y(i(m),j(m)) \in Y_{i(m)}$ with $|T(m) - T(y(i(m),j(m))|<\frac{\varepsilon}{3}$ and $|\tau(m) - \tau(i(m),j(m))| < \frac{\varepsilon}{3}$ (guaranteed by the weak continuity of both traces).

For brevity let 
\begin{align*}
y &= y(i(m),j(m)),& u_p &= u_p(i(m),j(m),3K),\\
\alpha_p &= \alpha_p(i(m),j(m),3K),& n &= n(i(m),j(m),3K)
\end{align*}
and consider

\begin{align*}
|T(y) - \tau(y)| & =   \Big|T\Big(\sum_{p=1}^n \alpha_p u_p y u_p^*\Big) - \tau(y)\Big|\\
& =  \Big|T\Big(\sum_{p=1}^n \alpha_p u_p y u_p^* - \tau(y)I\Big)\Big|\\
& \leq  \Big|\Big|\sum_{p=1}^n \alpha_p u_p y u_p^* - \tau(y)I\Big|\Big|\\
& <  \frac{1}{3K}\\
& < \frac{\varepsilon}{3}.
\end{align*}

Thus we have 

\begin{align*}
|T(m) - \tau(m)| & \leq  |T(m) - T(y)| + |T(y) - \tau(y)| + |\tau(y) - \tau(m)|\\
& <  \frac{\varepsilon}{3} + \frac{\varepsilon}{3} + \frac{\varepsilon}{3}\\
& =  \varepsilon,
\end{align*}

and we are done.
\end{proof}

\begin{dfn}
A map $\pi: (N,\sigma) \rightarrow (M,\tau)$ between two finite tracial von Neumann algebras is an \emph{embedding} if it is a unital, normal, injective, trace preserving $*$-homomorphism. We will write $N$ for $(N,\sigma)$ when no confusion can occur. We say that a tracial von Neumann algebra $N$ is \emph{embeddable} if there exists a trace-preserving unital embedding $\pi: N \rightarrow R^\mathcal{U}$.
\end{dfn}

\begin{exmpl}\label{simul}
\textbf{\tql Too Many Traces.\tqr}This example will provide an algebra $\FR{A}$ such that $\alpha_{(\FR{A},M)}$ simultaneously fails injectivity and surjectivity for some $M$.  This idea was suggested by N. Brown.  Let $N$ be an embeddable separable non-hyperfinite II$_1$-factor.  Let $\pi,\rho: N \rightarrow R^{\mathcal{U}}$ be two embeddings that are not unitarily equivalent (this is guaranteed by Lemma 2.9 of \cite{jung}). Let $Y := W^*(\pi(N) \cup \rho(N))$, and let $X = R^\mathcal{U}$.  We have that the separable algebra $Y$ is contained in the (nonseparable) II$_1$ factor $X$; so by Lemma \ref{sepsubfac} there is a separable II$_1$ factor $M$ such that $Y \subset M \subset X$. We claim that $\pi$ and $\rho$ are not weakly approximately unitarily equivalent in $M$.  If they were weakly approximately unitarily equivalent in $M$ then they will also be weakly approximately unitarily equivalent in $R^{\mathcal{U}}$; but then by Theorem 3.1 of \cite{autoultra} we have that $\pi$ and $\rho$ are unitarily equivalent in $R^{\mathcal{U}}$ -- a contradiction.  Consider $\pi \otimes 1_R, \rho \otimes 1_R:  N \rightarrow M\otimes R$.  By Theorem \ref{presineq}, since $\pi$ and $\rho$ are inequivalent, we have that $\pi \otimes 1_R$ is not weakly approximately unitarily equivalent to $\rho \otimes 1_R$ (be assured that the proof of Theorem \ref{presineq} does not depend on this example).

Now let $\FR{A} = C^*(\mathbb{F}_{\infty})$.  And let $\zeta: C^*(\mathbb{F}_{\infty}) \rightarrow N$ be a $*$-monomorphism with weakly dense image as guaranteed by Proposition 3.1 of \cite{con}.  Let $\hat{\pi}$ and $\hat{\rho}$ be given by \[\hat{\pi} = (\pi\otimes 1_R) \circ \zeta: C^*(\mathbb{F}_{\infty}) \rightarrow N \rightarrow M \otimes R\] and \[ \hat{\rho} = (\rho \otimes 1_R) \circ \zeta: C^*(\mathbb{F}_{\infty}) \rightarrow N \rightarrow M \otimes R.\]  Then we clearly have that $[\hat{\pi}] \neq [\hat{\rho}]$ but $\alpha_{(\FR{A},M\otimes R)}([\hat{\pi}]) = \alpha_{(\FR{A},M\otimes R)}([\hat{\rho}])$.

Another consequence of Proposition 3.1 of \cite{con} is that $\FR{A} = C^*(\mathbb{F}_{\infty})$ enjoys the property that for any McDuff factor $S$ there is a trace $T_S \in T(\FR{A})$ such that $\pi_{T_S}(\FR{A})'' \cong S$ (where $\pi_{T_S}$ is the GNS representation corresponding with $T_S$).  By \cite{Ozawa}, there is no separable universal II$_1$ factor, and so we can conclude that there is no separable universal McDuff factor.  So let $S$ be such that $S$ does not embed into $M \otimes R$.  Then we have that $T_S \in T(\FR{A})$ as described above does not lift through $M \otimes R$.  Thus $T_S \notin \alpha_{(\FR{A},M\otimes R)}(\HOM_w(\FR{A},M\otimes R))$.  So we have that $\alpha_{(\FR{A},M\otimes R)}$ is neither injective nor surjective.
\end{exmpl}

\subsection{An Alternative Characterization of Hyperfiniteness}

Investigating the connection between weak approximate unitary equivalence and preservation of a given trace has led us to a characterization of a finite tracial ($R^\mathcal{U}$-embeddable) hyperfinite von Neumann algebra that we believe to be new.

A result of Jung gives a characterization of hyperfiniteness using embeddings into $R^\mathcal{U}$.  We state it as follows.

\begin{thm}[Lemma 2.9, \cite{jung}]\label{jung}
A separable tracial embeddable von Neumann algebra $N$ is hyperfinite if and only if any two embeddings $\pi,\rho: N \rightarrow R^\mathcal{U}$ are conjugate by a unitary in $R^\mathcal{U}$. 
\end{thm}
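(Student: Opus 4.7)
The plan is to prove the two implications separately.  The forward direction — hyperfiniteness implies any two embeddings are unitarily conjugate — is a classical back-and-forth argument.  The reverse is the main content and requires a more delicate obstruction argument.

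For the forward direction, I would write $N$ as the $||\cdot||_2$-closure of an increasing sequence of finite-dimensional $*$-subalgebras $B_1 \subset B_2 \subset \cdots$, which is possible precisely because $N$ is hyperfinite.  Any two trace-preserving $*$-homomorphisms from a finite-dimensional algebra into $R^\mathcal{U}$ are unitarily conjugate, since up to unitary equivalence a finite-dimensional embedding is determined by the multiplicities of its matrix units and those multiplicities are encoded in the induced trace.  So for each $n$ there is a unitary $u_n \in \mathcal{U}(R^\mathcal{U})$ with $u_n \rho(b) u_n^* = \pi(b)$ for every $b \in B_n$.  I would then use the $\aleph_1$-saturation of the ultrapower $R^\mathcal{U}$ to produce a single unitary $u \in R^\mathcal{U}$ implementing this simultaneously for all $b$ in the countable set $\bigcup_n B_n$: the type $\{||u\rho(b)u^* - \pi(b)||_2 < 1/k : b \in \bigcup_n B_n,\ k \in \mathbb{N}\}$ is countable and finitely satisfiable via the $u_n$.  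By $||\cdot||_2$-continuity on both sides, the equation then extends from $\bigcup_n B_n$ to all of $N$.

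For the reverse direction I would argue by contrapositive.  Assume $N$ is embeddable but not hyperfinite; I want to exhibit two embeddings $\pi, \rho: N \to R^\mathcal{U}$ that are not unitarily conjugate.  The natural invariant distinguishing unitary conjugacy classes of embeddings is Voiculescu's free entropy dimension of the image of a generating tuple, computed via matricial microstates.  Non-hyperfiniteness of $N$ guarantees that the microstate space of a generating tuple of $N$, modulo the action of the matricial unitary groups $\mathcal{U}_n$, contains more than one asymptotic orbit (equivalently, the microstate space is not asymptotically connected under unitary approximation).  Two embeddings $\pi$ and $\rho$ constructed from microstate sequences lying in distinct asymptotic components then cannot be unitarily conjugate in $R^\mathcal{U}$, because such a conjugation would lift to a sequence of unitaries in $\mathcal{U}_n$ carrying one microstate orbit to the other — contradicting their asymptotic separation.

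The main obstacle is the reverse direction.  Without the microstate/free-entropy machinery there is no obvious unitary-conjugacy invariant that distinguishes hyperfinite embeddings from non-hyperfinite ones.  One might try to extract such an invariant from the isomorphism class or bimodule structure of the relative commutant $\pi(N)' \cap R^\mathcal{U}$, but for non-hyperfinite $N$ these relative commutants are notoriously hard to control — as evidenced already by Theorem \ref{Rchar} of the present paper — so the microstate route taken by Jung appears essential.
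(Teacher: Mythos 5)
The paper does not prove this theorem; it is stated as a direct citation of Lemma~2.9 of Jung's paper and used as a black box throughout (the remark after the theorem even summarizes Jung's method). So there is no in-paper proof to compare against, and the proposal must be judged on its own.

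Your forward direction is correct and standard: hyperfiniteness gives an increasing sequence of finite-dimensional $*$-subalgebras dense in $||\cdot||_2$, any two unital trace-preserving maps from a finite-dimensional algebra into the II$_1$-factor $R^\mathcal{U}$ are unitarily conjugate, and countable saturation of the ultrapower assembles a single implementing unitary, after which $||\cdot||_2$-continuity of the normal embeddings $\pi,\rho$ closes the argument. (A small fix: $\bigcup_n B_n$ is uncountable; run the saturation argument on a countable $||\cdot||_2$-dense subset of it, or on a countable generating set.)

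The reverse direction has a genuine gap, and you essentially flag it yourself. You assert that non-hyperfiniteness of $N$ ``guarantees that the microstate space of a generating tuple of $N$ ... contains more than one asymptotic orbit'' under matricial unitary conjugation, but this claim is precisely Jung's non-tubularity theorem, and it is where all of the work lives. Jung must first establish the hard implication that an embeddable separable von Neumann algebra with a tubular generating set is hyperfinite; only then does the failure of tubularity for non-hyperfinite $N$ produce, via ultraproduct constructions from $\delta$-near microstates with $\varepsilon$-separated unitary orbits, the two inequivalent embeddings. Your sketch takes this implication as read, which inverts the logical order: it is not an obvious consequence of non-hyperfiniteness, and no argument for it is offered. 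The concluding sentence of your proposal (``the microstate route taken by Jung appears essential'') is an honest acknowledgment that what you have written is a roadmap to Jung's proof rather than a proof.
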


The characterization we present in the following theorem frames Jung's result in terms of embeddings into separable algebras -- removing (most of) the ultrapower language from the characterization.  This characterization may be known to experts; but we have not seen it in the literature.

\begin{thm}\label{sepchar}
Let $N$ be a separable tracial embeddable von Neumann algebra.  Then $N$ is hyperfinite if and only if for every separable McDuff II$_1$-factor $M$, any two embeddings $\pi,\rho: N \rightarrow M$ are weakly approximately unitarily equivalent.
\end{thm}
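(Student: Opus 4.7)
The plan is to prove both directions; the forward implication adapts the argument of Theorem \ref{injR} with $R$ replaced by $M$, while the reverse implication reduces to Jung's Theorem \ref{jung} via a stabilization trick.

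For the forward direction, suppose $N$ is hyperfinite and let $\pi,\rho: N \to M$ be embeddings into a separable McDuff factor $M$. Since embeddings are trace-preserving, $\tau_M \circ \pi = \tau_N = \tau_M\circ \rho$, so all $*$-moments agree and the densely defined map $\pi(a)\mapsto \rho(a)$ extends to a normal $*$-isomorphism $\varphi: \pi(N) \to \rho(N) \subset M$. Using hyperfiniteness of $\pi(N)\cong N$, I would fix an increasing sequence $\{B_n\}$ of finite-dimensional $*$-subalgebras with $\pi(N) = W^*\big(\bigcup_n B_n\big)$, and observe that $\varphi|_{B_n}$ and the inclusion $\iota_n: B_n \hookrightarrow M$ are $*$-homomorphisms from a finite-dimensional (hence nuclear) algebra into the McDuff factor $M$ inducing the same trace. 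Theorem \ref{nuctrace} then yields $\varphi|_{B_n} \sim \iota_n$ in $M$. A Kaplansky-density plus triangle-inequality argument (literally the computation at the end of the proof of Theorem \ref{injR}, with $R$ replaced by $M$) gives $\mathrm{id}_{\pi(N)} \sim \varphi$ in $M$, and so $[\pi] = [\rho]$ in $\HOM_w(N,M)$.

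For the reverse direction, I would argue contrapositively. Suppose $N$ is not hyperfinite; by Theorem \ref{jung} there exist embeddings $\pi,\rho: N \to R^{\mathcal{U}}$ that are not unitarily conjugate in $R^{\mathcal{U}}$. Set $Y := W^*(\pi(N)\cup \rho(N))$, which is separable, and apply Lemma \ref{sepsubfac} to obtain a separable II$_1$-factor $M_0$ with $Y \subset M_0 \subset R^{\mathcal{U}}$. I claim $\pi \not\sim \rho$ in $M_0$: otherwise any approximating unitaries in $\mathcal{U}(M_0)\subset \mathcal{U}(R^{\mathcal{U}})$ would witness $\pi\sim\rho$ in $R^{\mathcal{U}}$, and by Theorem 3.1 of \cite{autoultra} this would upgrade to unitary equivalence in $R^{\mathcal{U}}$, contradicting Jung. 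Since $M_0$ need not be McDuff, I would pass to the separable McDuff factor $M := M_0 \otimes R$ and consider $\pi\otimes 1_R,\, \rho\otimes 1_R : N \to M$, which are still embeddings. By Theorem \ref{presineq}, inequivalence is preserved under tensoring with $1_R$, so $\pi\otimes 1_R \not\sim \rho\otimes 1_R$ in $M$, producing the required pair of embeddings into a separable McDuff factor that are not weakly approximately unitarily equivalent.

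The main obstacle is purely bookkeeping in the forward direction (transcribing the Theorem \ref{injR} estimate with general McDuff target) and, in the reverse direction, the fact that Lemma \ref{sepsubfac} cannot by itself guarantee a McDuff intermediate factor; this is handled by tensoring with $R$ at the end and invoking Theorem \ref{presineq}. The actual content -- that hyperfinite algebras are rigid under trace-preserving embeddings into McDuff targets -- is carried by the finite-dimensional WAUE classification supplied by Theorem \ref{nuctrace}.
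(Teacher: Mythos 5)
Your proof is correct and follows essentially the same path as the paper: the forward direction approximates $N$ by finite-dimensional subalgebras and uses the uniqueness (up to equivalence) of finite-dimensional embeddings into a II$_1$-factor, transcribing the $\varepsilon/3$ estimate from Theorem \ref{injR} with $R$ replaced by $M$; the reverse direction invokes Jung's theorem, builds $M_0$ via Lemma \ref{sepsubfac}, and tensors with $R$ using Theorem \ref{presineq}, exactly as in the paper and in Example \ref{simul}. The only cosmetic difference is that you cite Theorem \ref{nuctrace} for the finite-dimensional step where the paper appeals directly to the classical fact that two trace-preserving embeddings of a finite-dimensional algebra into a II$_1$-factor are unitarily conjugate — both are valid.
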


\begin{proof}
\noindent \textbf{($\Rightarrow$):}  This implication is well-known, but we include the reasoning here for the sake of completeness. Assume that $N$ is hyperfinite.  Approximate $N$ with an increasing sequence of finite-dimensional subalgebras.  The conclusion follows from the fact that any two embeddings of a finite dimensional algebra into a II$_1$-factor are unitarily equivalent.

\noindent \textbf{($\Leftarrow$):}  The argument here is similar to the one found in Example \ref{simul}.  Assume that $N$ is not hyperfinite.  Then Jung's result says that there exist two embeddings $\pi, \rho: N \rightarrow R^\mathcal{U}$ such that $\pi$ and $\rho$ are not unitarily conjugate.  Let $Y := W^*(\pi(N)\cup \rho(N))$.  Then by Lemma \ref{sepsubfac} there is a separable II$_1$-factor $M_0 \subset R^\mathcal{U}$ containing $Y$.  Just as in Example \ref{simul}, we may argue that $\pi$ and $\rho$ are not weakly approximately unitarily equivalent in $M_0$.  And by Theorem \ref{presineq} (whose proof does not rely on this result), we have that $\pi\otimes 1_R: N \rightarrow M_0 \otimes R$ is not weakly approximately unitarily equivalent to $\rho \otimes 1_R: N \rightarrow M_0 \otimes R$.  So putting $M := M_0 \otimes R$, we are done.
\end{proof}

\begin{rmk}
Jung's approach to the characterization in \cite{jung} hinges on the concept of tubularity: a condition on neighborhoods of unitary orbits of the microstate spaces for the generators of the algebra.  We remark here that this concept of tubularity was preceded a decade earlier in \cite{hadwin} by Hadwin's concept of dimension ratio.  The dimension ratio is a quantity associated to the self adjoint generators of a tracial $C^*$-algebra. This dimension ratio quantifies tubularity in the sense that the dimension ratio of the generators is 0 if and only if the generators are tubular.  See \cite{jung} and \cite{hadwin} for the relevant definitions and theorems.
\end{rmk}

\section{Convex Analysis}\label{convexanalysis}

In the first part of this section we discuss a necessary condition for extreme points of $\HOM_w(\FR{A},M)$. A complete characterization of extreme points remains open, and thus the question of existence of extreme points in $\HOM_w(\FR{A},M)$ is also open.  In \S \ref{ultrasit} we provide characterizations for extreme points in broad cases.  The second part of this section discusses a natural relationship between quotients of $\FR{A}$ and faces of $\HOM_w(\FR{A},M)$. The discussion there provides a sufficient condition for the existence of extreme points. 



\subsection{Factorial Closure}

We now proceed to establish a necessary (but not sufficient: see Example \ref{sufffail}) condition for extreme points in $\HOM_w(\FR{A},M)$.  Although relative commutants are not well-defined under weak approximate unitary equivalence in general, it is easy to see that the von Neumann closure of the image of a $*$-homomorphism is well-defined under weak approximate unitary equivalence up to $*$-isomorphism.  We state the necessary condition for extreme points as follows.

%
%
%
%

\begin{thm}\label{extfact}
If $[\pi] \in \HOM_w(\FR{A},M)$ is extreme, then $W^*(\pi(\FR{A}))$ is a factor.
\end{thm}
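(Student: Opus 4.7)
I would prove the contrapositive: if $W^*(\pi(\FR{A}))$ has a nontrivial center, then $[\pi]$ admits a nontrivial convex decomposition and hence is not extreme. A useful preliminary remark is that the isomorphism class of $W^*(\pi(\FR{A}))$ as a tracial von Neumann algebra is an invariant of $[\pi]$, because it is the GNS completion of $\FR{A}$ with respect to $\tau_M \circ \pi$ and the latter depends only on $[\pi]$. So it will suffice to produce $\pi_1, \pi_2 : \FR{A} \to M$ with $[\pi] = t[\pi_1] + (1-t)[\pi_2]$ and $W^*(\pi_1(\FR{A})) \not\cong W^*(\pi(\FR{A}))$.

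\textbf{Normalization via the McDuff structure.} Pick a nontrivial central projection $z \in \mathcal{Z}(W^*(\pi(\FR{A})))$ with $\tau_M(z) = t \in (0,1)$, a regular isomorphism $\sigma_M : M \otimes R \to M$, and a projection $p \in R$ with $\tau(p) = t$. Since $z$ and $q := \sigma_M(1_M \otimes p)$ are projections of the same trace in the factor $M$, they are conjugate by some unitary $u \in M$; after replacing $\pi$ by $u\pi u^* \in [\pi]$, the central projection is $q$ itself. In particular $q$ commutes with every $\pi(a)$, so $\pi(a) = q\pi(a) + q^\perp\pi(a)$ is block-diagonal across $qMq$ and $q^\perp M q^\perp$. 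Pulling back via $\sigma_M^{-1}$, define the corner maps
\[
\tilde\pi_1(a) := (1_M \otimes p)\,\sigma_M^{-1}(\pi(a))\,(1_M \otimes p) \in M \otimes pRp
\]
and $\tilde\pi_2 : \FR{A} \to M \otimes p^\perp R p^\perp$ analogously; these are unital $*$-homomorphisms because $1_M \otimes p$ commutes with $\sigma_M^{-1}(\pi(\FR{A}))$.

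\textbf{Defining $\pi_1, \pi_2$ and verifying the convex combination.} Fix isomorphisms $\theta: pRp \to R$ and $\theta': p^\perp R p^\perp \to R$, and define the isomorphisms $\beta_1 := \sigma_M \circ (\mathrm{id}_M \otimes \theta) : M \otimes pRp \to M$ and $\beta_2 := \sigma_M \circ (\mathrm{id}_M \otimes \theta') : M \otimes p^\perp R p^\perp \to M$. Set $\pi_i := \beta_i \circ \tilde\pi_i$. The key calculation is that
\[
\beta_1 \circ (\pi_1 \otimes p)(a) = \sigma_M(\pi_1(a) \otimes \theta(p)) = \sigma_M(\pi_1(a) \otimes 1_R) \sim \pi_1(a)
\]
by the regularity of $\sigma_M$. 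Since $\beta_1$ is an isomorphism inducing a bijection on $\HOM_w$-classes, this reads $[\pi_1 \otimes p] = [\tilde\pi_1]$ in $\HOM_w(\FR{A}, M \otimes pRp)$, so $\sigma_M(\pi_1 \otimes p) \sim q\pi$ in $qMq$, and similarly $\sigma_M(\pi_2 \otimes p^\perp) \sim q^\perp \pi$ in $q^\perp M q^\perp$. Block-diagonal unitaries in $M$ (sums of unitaries from the two orthogonal corners) patch these equivalences together to give $\sigma_M(\pi_1 \otimes p + \pi_2 \otimes p^\perp) \sim \pi$ in $M$, i.e.\ $t[\pi_1] + (1-t)[\pi_2] = [\pi]$.

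\textbf{Nontriviality and the main obstacle.} Since $q$ is central in $W^*(\pi(\FR{A}))$, the latter decomposes as the nontrivial direct sum $qW^*(\pi(\FR{A}))q \oplus q^\perp W^*(\pi(\FR{A}))q^\perp$, whereas $W^*(\pi_1(\FR{A})) \cong q W^*(\pi(\FR{A})) q$ is just one summand; these cannot be isomorphic as tracial von Neumann algebras, so by the invariance noted above $[\pi_1] \neq [\pi]$ and $[\pi]$ is not extreme. The technical heart of the argument is the verification in paragraph three; the clean choice $\beta_i = \sigma_M \circ (\mathrm{id}_M \otimes \theta_i)$ is essential, since an arbitrary identification $qMq \cong M$ could differ from this one by a non-approximately-inner automorphism (cf.\ Remark \ref{sakairmk}) and would thus alter the equivalence class.
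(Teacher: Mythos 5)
Your overall strategy mirrors the paper's: argue by contrapositive, conjugate the nontrivial central projection $z\in\mathcal{Z}(W^*(\pi(\FR{A})))$ to a projection of the form $q=\sigma_M(1_M\otimes p)$, cut down to produce $\pi_1,\pi_2$, verify $t[\pi_1]+(1-t)[\pi_2]=[\pi]$, and then show the decomposition is nontrivial. Your construction via $\tilde\pi_i$ and $\beta_i=\sigma_M\circ(\mathrm{id}_M\otimes\theta_i)$ is a clean repackaging of the paper's cut-down via partial isometries $v,w$, and the verification that the corner equivalences $\sigma_M(\pi_1\otimes p)\sim q\pi$ and $\sigma_M(\pi_2\otimes p^\perp)\sim q^\perp\pi$ patch together, while compressed, does go through.

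The gap is in the nontriviality paragraph. You assert that $W^*(\pi(\FR{A}))=qW^*(\pi(\FR{A}))q\oplus q^\perp W^*(\pi(\FR{A}))q^\perp$ and its summand $W^*(\pi_1(\FR{A}))\cong qW^*(\pi(\FR{A}))q$ ``cannot be isomorphic as tracial von Neumann algebras.'' This is false. Take $W^*(\pi(\FR{A}))$ diffuse abelian (e.g.\ $L^\infty[0,1]$ with Lebesgue trace) and $q$ the indicator of $[0,\tfrac12]$: then $qW^*(\pi(\FR{A}))q$ is again diffuse abelian with a faithful normal normalized trace, hence isomorphic to $L^\infty[0,1]$ as a tracial von Neumann algebra. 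So the invariance remark in your first paragraph, while correct, is the wrong tool: the isomorphism class of $(W^*(\pi(\FR{A})),\tau_M)$ simply cannot distinguish $[\pi]$ from $[\pi_1]$ in general.

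What does work --- and is what the paper actually does, in a quantitative guise --- is to compare the induced traces on $\FR{A}$ directly. From your construction one computes $\tau_M\circ\pi_1(a)=\tfrac{1}{t}\,\tau_M(q\,\pi(a))$. If this agreed with $\tau_M\circ\pi$ on all of $\FR{A}$, then both being normal linear functionals agreeing on the weakly dense subalgebra $\pi(\FR{A})\subset W^*(\pi(\FR{A}))$, they would agree on $W^*(\pi(\FR{A}))$, giving $\tau_M(qx)=t\,\tau_M(x)$ for all $x\in W^*(\pi(\FR{A}))$; taking $x=q$ yields $t=t^2$, contradicting $0<t<1$. Since weak approximate unitary equivalence in a II$_1$-factor preserves the induced trace, $\tau_M\circ\pi_1\neq\tau_M\circ\pi$ forces $[\pi_1]\neq[\pi]$. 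The paper avoids invoking normality on the weak closure by choosing $x\in\FR{A}$ with $\pi(x)$ close in $\|\cdot\|_2$ to $1-z$ and comparing $\|\pi(x)\|_2$ with $\|\rho_1(x)\|_2$ numerically, but the underlying obstruction is the same. Replace your final paragraph with this trace comparison and the proof is sound.
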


\begin{proof}
Using the isomorphism $\HOM_w(\FR{A},M) \cong \HOM_w(\FR{A},M\otimes R)$ we will show that if $[\pi] \in \HOM_w(\FR{A},M\otimes R)$ is extreme then $W^*(\pi(\FR{A}))$ is a factor.  We will argue by contrapositive and assume that $W^*(\pi(\FR{A}))$ is not a factor.  Then there exists a nontrivial central projection $z \in W^*(\pi(\FR{A}))$.  In particular $0<\tau(z)<1$.  We will now construct inequivalent $*$-homomorphisms $\rho_1$ and $\rho_2$ using $z$ and $z^{\perp}$ in the following way.  Let $p \in R$ be a projection such that $\tau(p) = \tau(z)$.  Let $\sigma_{M\otimes R} : M\otimes R\otimes R \rightarrow M \otimes R, \sigma_M: M \otimes R \rightarrow M$, and $\epsilon: R\otimes R$  be regular isomorphisms.  Let $v,w \in M \otimes R\otimes R$ be partial isometries with 
\begin{align*}
v^*v &= \sigma_{M\otimes R}^{-1}(z), & vv^* &= 1_M \otimes 1_R \otimes p, \\
 w^*w &= \sigma_{M\otimes R}^{-1}(z^{\perp}),&  ww^* &= 1_M \otimes 1_R \otimes p^{\perp}.
\end{align*}

We have $v+w \in \mathcal{U}(M\otimes R \otimes R)$.  Let $Ad(u)(x) = uxu^*$.  For $q \in \left\{p,p^{\perp}\right\}$, let $T_q : qRq \rightarrow R$ be an isomorphism.  We are now ready to define $\rho_1,\rho_2: \FR{A} \rightarrow M \otimes R$ by the following formulas.

\begin{align*}
\rho_1(a) &= (\text{id}_M \otimes \epsilon)\circ(\text{id}_M\otimes\text{id}_R \otimes T_p)\circ Ad(v+w)\circ\sigma_{M\otimes R}^{-1}(z\pi(a))\\
\rho_2(a) & =   (\text{id}_M \otimes \epsilon)\circ(\text{id}_M\otimes\text{id}_R \otimes T_p^{\perp})\circ Ad(v+w)\circ\sigma_{M\otimes R}^{-1}(z^{\perp}\pi(a)).
\end{align*}
By construction, we have $[\rho_1],[\rho_2] \in \HOM_w(\FR{A},M\otimes R)$.  

We claim that we will be done if we show the following two statements are true:
\begin{enumerate}
	\item If $t = \tau(z)$ then $t[\rho_1] + (1-t)[\rho_2] = [\pi]$.
	\item $[\rho_1] \neq [\pi]$.
\end{enumerate}
Indeed, if these two statements hold, then $[\pi]$ is not an extreme point.

\noindent (1):  By definition, we have that \[t[\rho_1] + (1-t)[\rho_2] = [\sigma_{M\otimes R} (\rho_1 \otimes p + \rho_2 \otimes p^{\perp})].\]  
And we have 
\[\rho_1(a)\otimes p = \big((\text{id}_{M\otimes R} \otimes p)\circ(\text{id}_M \otimes \epsilon) \circ (\text{id}_M \otimes \text{id}_R \otimes T_p)\big) \circ Ad(v+w) \circ \sigma_{M\otimes R}^{-1} (z\pi(a)).\]
  Notice that 
\[\big((\text{id}_{M\otimes R} \otimes p)\circ(\text{id}_M \otimes \epsilon)\circ(\text{id}_M\otimes \text{id}_R\otimes T_p)\big) = \text{id}_M \otimes f_p: M\otimes (R\otimes pRp) \rightarrow M\otimes (R\otimes pRp)\] 
where 
\[f_p = (\text{id}_R \otimes p)\circ\epsilon \circ (\text{id}_R \otimes T_p): R \otimes pRp \rightarrow R\otimes pRp\]
 is a unital $*$-homomorphism.  Thus we have 
\[f_p(x) =\lim_j a_jxa_j^*\] 
where $a_j \in \mathcal{U}(R\otimes pRp)$.  Note that $a_j^*a_j=a_ja_j^* = 1_R\otimes p$.  So we have 
\[\big((\text{id}_{M\otimes R} \otimes p)\circ(\text{id}_M \otimes \epsilon)\circ(\text{id}_M\otimes \text{id}_R\otimes T_p)\big) (y) = \lim_j (1_M \otimes a_j)y(1_M \otimes a_j)^*.\]

Similarly, \[\rho_2(a) \otimes p^{\perp} = \big((\text{id}_{M\otimes R} \otimes p^{\perp})\circ(\text{id}_M \otimes \epsilon) \circ (\text{id}_M \otimes \text{id}_R \otimes T_p^{\perp})\big) \circ Ad(v+w) \circ \sigma_{M\otimes R}^{-1} (z^{\perp}\pi(a)),\]
and there is a sequence $\left\{b_j\right\} \subset R\otimes R$ with $b_j^*b_j = b_jb_j^* = 1\otimes p^{\perp}$ such that
\[\big((\text{id}_{M\otimes R} \otimes p^{\perp})\circ(\text{id}_M \otimes \epsilon) \circ (\text{id}_M \otimes \text{id}_R \otimes T_p^{\perp})\big)(y) = \lim_j (1_M\otimes b_j)y(1_M\otimes b_j)^*.\]

We now have that $(1_M\otimes a_j + 1_M\otimes b_j)$ is a unitary for every $j$ and so 
\[\rho_1(a)\otimes p + \rho_2(a)\otimes p^{\perp} \]
\begin{align*}
&= \lim_j Ad(1_M\otimes a_j + 1_M \otimes b_j)\circ Ad(v+w)\circ\sigma_{M\otimes R}^{-1}(z \pi(a) + z^{\perp}\pi(a))\\
& = \lim_j Ad(1_M\otimes a_j + 1_M \otimes b_j)\circ Ad(v+w)\circ\sigma_{M\otimes R}^{-1} (\pi(a)).
\end{align*}

Thus,
\begin{align*}
[t[\rho_1] + (1-t)[\rho_2] & = [\sigma_{M\otimes R}(\rho_1 \otimes p + \rho_2\otimes p^{\perp})]\\
& =  [\sigma_{M\otimes R} \circ Ad(v+w) \circ \sigma_{M\otimes R}^{-1} \circ \pi]\\
& =  [\sigma_{M\otimes R} \circ \sigma_{M\otimes R}^{-1} \circ \pi]\\
& =  [ \pi].
\end{align*}  So (1) has been verified.

\noindent (2): To show that $[\pi] \neq [\rho_1]$ we will exhibit an element $x$ with $||\pi(x)||_2 \neq ||\rho_1(x)||_2.$  Let $\varepsilon>0$ be such that $\ds \varepsilon < \frac{\sqrt{1-t}}{1+\frac{1}{\sqrt{t}}}$.  Let $x \in \FR{A}$ be such that $||\pi(x) - (1-z)||_2 < \varepsilon$.  Then we have 
\begin{equation}\label{zpix}
\begin{aligned}
||z\pi(x)||_2 &= ||z\pi(x) + z(1-z)||_2\\
 &= ||z(\pi(x) - (1-z))||_2 \\
&\leq ||z||\cdot ||\pi(x) - (1-z)||_2\\
 &< \varepsilon
\end{aligned}
\end{equation}
and
\begin{equation}\label{pix}
\begin{aligned}
||\pi(x)||_2 &= ||(\pi(x) - (1-z)) + (1-z)||_2\\
& \geq ||(1-z)||_2 - ||\pi(x) - (1-z)||_2\\
& > \sqrt{1-t} - \varepsilon.
\end{aligned}
\end{equation}

Let $\varphi: z (M \otimes R)z \rightarrow M\otimes R$ be given by
\[\varphi = (\text{id}_M \otimes \epsilon) \circ (\text{id}_M\otimes \text{id}_R \otimes T_p) \circ Ad(v+w) \circ \sigma_{M\otimes R}^{-1}.\]
So $\rho_1(x) = \varphi(z\pi(x))$. If $[\pi] = [\rho_1]$ then there exists a sequence of unitaries $\left\{u_n\right\}\subset \mathcal{U}(M\otimes R)$ such that 
\[\lim_{n\rightarrow \infty} u_n \pi(x) u_n^* = \rho_1(x) = \varphi(z\pi(x)).\]
So we have the following equation of norms
\[||\pi(x)||_2 = ||\lim_{n\rightarrow \infty} u_n \pi(x) u_n^*||_2 = ||\varphi(z\pi(x))||_2.\]
Then according to \eqref{pix} we have on one hand 
\[||\pi(x)||_2 > \sqrt{1-t}-\varepsilon,\]
while on the other hand, by \eqref{zpix} we have
\[||\varphi(z\pi(x))||_2 = \frac{1}{\sqrt{t}}||z\pi(x)||_2 < \frac{\varepsilon}{\sqrt{t}}.\]
This gives the following implications
\begin{align*}
\sqrt{1-t} - \varepsilon < \frac{\varepsilon}{\sqrt{t}} & \Rightarrow  \sqrt{1-t} < \Big(1+\frac{1}{\sqrt{t}}\Big)\varepsilon\\
 &\Rightarrow  \frac{\sqrt{1-t}}{1+\frac{1}{\sqrt{t}}} < \varepsilon.
\end{align*} 
This last line is a contradiction to the assumption that $\ds \varepsilon < \frac{\sqrt{1-t}}{1+\frac{1}{\sqrt{t}}}$.  So we can conclude that $[\pi]\neq [\rho_1]$, and this completes the proof.
\end{proof}

\begin{exmpl}\label{sufffail}
We will show that the converse of the above theorem does not hold in general.  Let $M$ be a McDuff factor with an automorphism $\beta \in \text{Aut}(M)\setminus \overline{\text{Inn}}(M)$. As in Example \ref{injfail} we may consider the nonhyperfinite McDuff factor $\otimes L(\mathbb{F}_2)$; the fact that the inner automorphisms are not dense in the automorphism group of $M$ is the main player in this argument.  Let $\FR{A}$ be a dense $C^*$-subalgebra of $M$.  Let $\rho_1 = \text{id}_{\FR{A}}$ be the identity inclusion of $\FR{A}$ in $M$, and let $\rho_2 = \beta \circ \rho_1$.  Then from Example \ref{injfail} we have that $[\rho_1]\neq [\rho_2]$.  Thus $[\pi] = \frac{1}{2}[\rho_1] + \frac{1}{2}[\rho_2]$ is not an extreme point in $\HOM_w(\FR{A},M)$.  Let $p$ be a projection in $R$ so that $\tau(p) = \frac{1}{2}$, and thus $\sigma_M(\rho_1\otimes p + \rho_2 \otimes p^{\perp})$ is a representative of $[\pi]$.  We will show that $W^*((\rho_1\otimes p + \rho_2 \otimes p^{\perp})(\FR{A})) \cong W^*(\FR{A}) \cong M$, and thus giving an example of a non-extreme point with a factorial closure of its image.  Consider the map \[\varphi: \FR{A} \rightarrow (\rho_1 \otimes p + \rho_2 \otimes p^{\perp})(\FR{A})\] given by 
\begin{align*} 
\varphi(a) & =  \rho_1(a) \otimes p + \rho_2(a) \otimes p^{\perp}\\
& =  a\otimes p + \beta(a) \otimes p^{\perp}.
\end{align*}
The map $\varphi$ is clearly a bijective $*$-homomorphism.  The following computation shows that $\varphi$ is also isometric with respect to $||\cdot||_2$.
\begin{align*}
||\varphi(a)||_2^2 & =  \tau(( a\otimes p + \beta(a) \otimes p^{\perp})^*( a\otimes p + \beta(a) \otimes p^{\perp}))\\
& =  \tau(a^*a\otimes p + \beta(a^*a)\otimes p^{\perp})\\
& =  \frac{1}{2}\tau(a^*a) + \frac{1}{2}\tau(a^*a)\\
& =  ||a||_2^2.  
\end{align*}
Evidently, $\varphi$ extends to an isomorphism between $M =W^*(\FR{A})$ and $W^*((\rho_1\otimes p + \rho_2 \otimes p^{\perp})(\FR{A}))$.
\end{exmpl}

While the converse of Theorem \ref{extfact} fails in general, if we combine Theorem \ref{nuctrace} with the observation that a trace is extreme if and only if it gives a factorial GNS construction, then the converse holds in the nuclear case.  Thus we have the following theorem characterizing extreme points of $\HOM_w(\FR{A},M)$ when $\FR{A}$ is nuclear.

\begin{thm}
If $\FR{A}$ is nuclear, then $[\pi] \in \HOM_w(\FR{A},M)$ is extreme if and only if $W^*(\pi(\FR{A}))$ is a factor.
\end{thm}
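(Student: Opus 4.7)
The plan is to combine Theorem \ref{extfact} with Theorem \ref{nuctrace} and the standard characterization of extreme traces via the GNS construction.

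The forward direction ($\Rightarrow$) is already handled by Theorem \ref{extfact}, which required no nuclearity hypothesis. So the real content lies in the converse.

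For the converse, I would invoke Theorem \ref{nuctrace}: since $\FR{A}$ is nuclear, the map $\alpha_{(\FR{A},M)}: \HOM_w(\FR{A},M) \to T(\FR{A})$ given by $[\pi]\mapsto \tau_M\circ \pi$ is an affine homeomorphism. Affine homeomorphisms preserve extreme points in both directions, so $[\pi]$ is extreme in $\HOM_w(\FR{A},M)$ if and only if $\tau_M\circ \pi$ is extreme in $T(\FR{A})$. Now I would appeal to the classical fact that a trace $T\in T(\FR{A})$ is extreme if and only if the GNS closure $\pi_T(\FR{A})''$ is a factor (this appears in standard references on tracial states of $C^*$-algebras).

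The only remaining task is to identify the GNS closure $\pi_{\tau_M\circ\pi}(\FR{A})''$ with $W^*(\pi(\FR{A}))$. Since $\tau_M$ is the faithful trace on $M$, its restriction to $W^*(\pi(\FR{A}))\subset M$ is a faithful normal tracial state whose composition with $\pi$ is $\tau_M\circ\pi$. By the uniqueness of the GNS representation associated to a faithful tracial state (together with the fact that $\pi(\FR{A})$ is weak-operator dense in $W^*(\pi(\FR{A}))$), there is a canonical trace-preserving $*$-isomorphism $\pi_{\tau_M\circ\pi}(\FR{A})''\cong W^*(\pi(\FR{A}))$. Thus one side is a factor if and only if the other is, and chaining the equivalences completes the proof.

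The argument is essentially a bookkeeping exercise once Theorems \ref{extfact} and \ref{nuctrace} are in hand; no serious obstacle appears. The only point that demands any care is verifying that the affine homeomorphism of Theorem \ref{nuctrace} actually sends extreme points to extreme points, but this is immediate from the definition of ``extreme'' and the fact that affine bijections preserve nontrivial convex decompositions in both directions.
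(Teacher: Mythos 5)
Your proposal is correct and follows the same route the paper takes: the forward direction is Theorem \ref{extfact}, and the converse combines the affine homeomorphism $\alpha_{(\FR{A},M)}$ from Theorem \ref{nuctrace} with the standard fact that a trace is extreme if and only if its GNS closure is a factor. The only difference is that you spell out the identification of $\pi_{\tau_M\circ\pi}(\FR{A})''$ with $W^*(\pi(\FR{A}))$, which the paper leaves implicit.
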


\noindent We will extend this characterization in \S\S\ref{amenable1st}, and we will see in \S\S\ref{amenable2nd} that this characterization of extreme points in $\HOM_w(\FR{A},M)$ holds for a general $\FR{A}$ when $M = R$.

\subsection{Quotients}

We have access to quotients of $C^*$-algebras; this access is unavailable in the setting of \cite{topdyn} because II$_1$ factors are simple.  Given a $*$-homomorphism $h: \FR{A} \rightarrow M$, let $q_h: \FR{A} \rightarrow \FR{A}/\text{ker}(h)$ be the canonical quotient map; and let $\overline{h}: \FR{A}/\text{ker}(h) \rightarrow M$ be the natural $*$-homomorphism that makes the following diagram commute.
\begin{equation}
\begin{tikzpicture}[baseline=(current bounding box.base)]\notag
	\matrix(m)[matrix of math nodes, row sep=2em,column sep = 2em,minimum width=2em]
		{\FR{A} & &  M\\
			& & \\
		 & \FR{A}/\text{ker}(h)& \\};
	\path[-stealth]
		(m-1-1) edge node [above] {$h$} (m-1-3)
		(m-1-1) edge node [below] {$q_h$} (m-3-2)
		(m-3-2) edge node [below] {$\overline{h}$} (m-1-3);
\end{tikzpicture}
\end{equation}
In particular $h = \overline{h} \circ q_h$.  The map $q_h$ induces a map $q^*_h: \HOM_w(\FR{A}/\text{ker}(h),M) \rightarrow \HOM_w(\FR{A},M)$ (see Proposition \ref{phihat}) with \begin{equation}q^*_h([\overline{h}]) = [\overline{h} \circ q_h] = [h].\label{quotienthat}\end{equation}

\begin{dfn}
A \emph{singly exposed face} of a closed bounded convex subset $C$ of a Banach space is a face that can be described as  $\left\{x \in C: h(x) = M\right\}$ where $h: C \rightarrow \mathbb{R}$ is a continuous affine functional and $M = \max \left\{h(x): x \in C\right\}$.
\end{dfn}  

\begin{dfn}
Given $a \in \FR{A}$, we define a natural continuous affine functional $f_a$ on $\HOM_w(\FR{A},M)$ given by $f_a([\pi]) = \tau_M(\pi(a))$.
\end{dfn}

We leave it to the reader to check that this satisfies the definition of a continuous affine functional.

\begin{lem}\label{kernelface}
 Let $J \trianglelefteq \FR{A}$ be a closed, two-sided ideal, and let $q: \FR{A} \rightarrow \FR{A}/J$ be the canonical quotient map.  Then we have that $q^*(\HOM_w(\FR{A}/J,M))$ is a singly exposed face of $\HOM_w(\FR{A},M)$.  
\end{lem}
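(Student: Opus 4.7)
The plan is to exhibit a single continuous affine functional $h$ on $\HOM_w(\FR{A},M)$ whose maximum is attained precisely on $q^*(\HOM_w(\FR{A}/J,M))$. The underlying idea is to detect the vanishing of $\pi$ on $J$ by one tracial equation.

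First I would identify
$$q^*(\HOM_w(\FR{A}/J,M)) = \left\{[\pi] \in \HOM_w(\FR{A},M) : J \subseteq \ker\pi\right\}.$$
The inclusion $\subseteq$ is immediate since $q^*([\overline{\pi}]) = [\overline{\pi}\circ q]$ kills $J$, while the reverse inclusion uses the first isomorphism theorem to factor any $\pi$ vanishing on $J$ through $\FR{A}/J$. The condition $J \subseteq \ker\pi$ is well-defined on equivalence classes: if $\pi\sim\pi'$ and $\pi(a)=0$, then there exist unitaries $u_n \in \mathcal{U}(M)$ with $\pi'(a) = \lim_n u_n\pi(a)u_n^* = 0$ in $\|\cdot\|_2$, and $\|\cdot\|_2$ is a norm on $M$.

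Since $\FR{A}$ is separable, so is the closed two-sided ideal $J$; hence $J$ is $\sigma$-unital and admits a strictly positive element $e \in J_+$ satisfying $\overline{eJe} = J$. I would then set $h := -f_e$, which is a continuous affine functional on $\HOM_w(\FR{A},M)$ by the paper's preceding remark on $f_a$. Because $\pi(e)\geq 0$ and $\tau_M$ is faithful, $h([\pi]) \leq 0$ with equality iff $\pi(e)=0$; by continuity of $\pi$ together with $\overline{eJe}=J$, this is equivalent to $\pi|_J = 0$. Consequently
$$\left\{[\pi] \in \HOM_w(\FR{A},M) : h([\pi])=0\right\} = q^*(\HOM_w(\FR{A}/J,M)),$$
and assuming this set is nonempty (the only non-vacuous case), the value $0$ is the maximum of $h$, realizing the right-hand side as a singly exposed face.

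There is no serious obstacle here; the only real decision is how to package the condition ``$\pi$ kills $J$'' into a single affine functional. A slightly less elegant alternative that avoids strict positivity is to pick a dense sequence $\{x_n\}\subseteq J_{\leq 1}$ and take $h := -\sum_{n=1}^\infty 2^{-n} f_{x_n^* x_n}$; since $|f_{x_n^*x_n}([\pi])| \leq 1$, the series converges uniformly and so $h$ is again continuous and affine, while the same faithfulness-plus-density argument pins down the zero set.
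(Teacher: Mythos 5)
Your proof is correct and takes essentially the same approach as the paper: both detect vanishing on $J$ by evaluating the trace at a single positive element whose vanishing under $\pi$ forces $\pi|_J=0$, and then use the negative of the corresponding affine functional $f_a$. The paper picks positive generators $\{a_n\}\subset J^+_{\leq 1}$ and forms $a=\sum 2^{-n}a_n$, which is effectively your second (dense-sequence) variant; your first variant via a strictly positive element is a mild repackaging of the same idea.
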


\begin{proof}

Without loss of generality, assume that $J$ is generated by $\left\{a_n\right\}_{n=1}^\infty \subset (J)^+_{\leq 1}$.  Put \[a := \sum_{n=1}^\infty \frac{1}{2^n}a_n.\]  Consider $f_{-a}$.  By the positivity of $\tau_M$ we get that $f_{-a}([\pi]) \leq 0$ for every $[\pi] \in \HOM_w(\FR{A},M)$. It is a quick observation to see that \[q^*(\HOM_w(\FR{A}/J,M)) = \left\{ [\pi] \in \HOM_w(\FR{A},M): f_{-a}([\pi]) = 0\right\}.\qedhere\]  

%
\end{proof}


The quotient map $q: \FR{A} \rightarrow \FR{A}/J$ is surjective, so by Proposition \ref{surin} we get that $q^*$ is a homeomorphism onto its image.  Thus we may regard $\HOM_w(\FR{A}/J,M)$ as a face of $\HOM_w(\FR{A},M)$ by identifying it with its image $q^*(\HOM_w(\FR{A}/J,M))$.  So every quotient of $\FR{A}$ gives a face of $\HOM_w(\FR{A},M)$.  Conversely, by Example \ref{blackadar}, any metrizable Choquet simplex arises as $\HOM_w(\FR{A},M)$ for some simple AF-algebra; so in this situation, any face of $\HOM_w(\FR{A},M)$ is not induced by a quotient of $\FR{A}$. 

We can use this discussion to give a sufficient condition for the existence of extreme points in $\HOM_w(\FR{A},M)$.

\begin{thm}\label{nucquot}
If $\FR{A}$ has a finite nuclear quotient, then $\HOM_w(\FR{A},M)$ has extreme points.
\end{thm}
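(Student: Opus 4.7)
The plan is to reduce the question to the existence of extreme traces on the finite nuclear quotient, using two tools developed earlier in the paper: the affine homeomorphism $\HOM_w(\FR{B},M) \cong T(\FR{B})$ for nuclear $\FR{B}$ (Theorem \ref{nuctrace}) and the fact that $\HOM_w(\FR{A}/J, M)$ sits inside $\HOM_w(\FR{A},M)$ as a face (Lemma \ref{kernelface} together with Proposition \ref{surin}).

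Concretely, suppose $J \trianglelefteq \FR{A}$ is a closed two-sided ideal with $\FR{A}/J$ finite and nuclear. Since $\FR{A}/J$ is finite, $T(\FR{A}/J)$ is nonempty; it is also convex and weak-$*$ compact as a closed subset of the unit ball of $(\FR{A}/J)^*$. The Krein--Milman theorem then provides an extreme point $T_0 \in T(\FR{A}/J)$. By Theorem \ref{nuctrace}, the map $\alpha_{(\FR{A}/J, M)}$ is an affine homeomorphism, so $[\pi_0] := \alpha_{(\FR{A}/J,M)}^{-1}(T_0)$ is an extreme point of $\HOM_w(\FR{A}/J, M)$.

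Let $q: \FR{A} \to \FR{A}/J$ be the quotient map. By Lemma \ref{kernelface}, $q^*(\HOM_w(\FR{A}/J, M))$ is a (singly exposed) face of $\HOM_w(\FR{A},M)$, and by Proposition \ref{surin}, the map $q^*$ is an affine homeomorphism onto its image. Since affine bijections preserve extreme points, $q^*([\pi_0])$ is an extreme point of the face $q^*(\HOM_w(\FR{A}/J, M))$. Because extreme points of a face of a convex set are extreme in the ambient convex set, $q^*([\pi_0]) = [\pi_0 \circ q]$ is an extreme point of $\HOM_w(\FR{A},M)$, completing the proof.

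I do not expect a substantive obstacle: all the analytic content has been packaged into Theorem \ref{nuctrace} and Lemma \ref{kernelface}. The only thing one must be careful about is that the hypothesis ``finite'' is used precisely to guarantee $T(\FR{A}/J) \ne \emptyset$ (nuclearity alone is insufficient, as the Cuntz algebras show); once this is in hand, the rest is a standard application of Krein--Milman plus the face-preserves-extremality principle.
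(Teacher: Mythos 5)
Your argument is correct and follows exactly the same route as the paper: identify $\HOM_w(\FR{A}/J,M)$ with $T(\FR{A}/J)$ via Theorem \ref{nuctrace}, invoke Krein--Milman to get an extreme trace, and then use the fact (Lemma \ref{kernelface} plus Proposition \ref{surin}) that $\HOM_w(\FR{A}/J,M)$ embeds as a face of $\HOM_w(\FR{A},M)$ and that extreme points of faces are extreme in the ambient set. The only difference is that you spell out the Krein--Milman step and the role of finiteness (guaranteeing $T(\FR{A}/J)\neq\emptyset$) explicitly, which the paper leaves implicit.
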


\begin{proof}
If $\FR{A}/J$ is finite and nuclear, then $\HOM_w(\FR{A}/J,M) \cong T(\FR{A})$; and thus \linebreak $\HOM_w(\FR{A}/J,M)$ has extreme points.  An extreme point of a face is extreme.
\end{proof}

%


If we combine Lemma \ref{kernelface} with the observation that any $\FR{A}$ is a quotient of $C^*(\mathbb{F}_\infty)$, we get the following theorem.

\begin{thm}\label{uniface}
For any $\FR{A}$, an affinely homeomorphic copy of $\HOM_w(\FR{A},M)$ appears as a face of $\HOM_w(C^*(\mathbb{F}_{\infty}),M)$.
\end{thm}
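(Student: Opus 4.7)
The plan is to combine Lemma \ref{kernelface} with the universal property of $C^*(\mathbb{F}_\infty)$ and the homeomorphism result of Proposition \ref{surin}. The strategy is essentially already laid out in the paragraph preceding the statement, so the proof is mostly a matter of stringing together existing pieces rather than producing genuinely new content.

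First, I would invoke the standard fact that any unital separable $C^*$-algebra $\FR{A}$ is a quotient of $C^*(\mathbb{F}_\infty)$. Concretely, choose a countable set $\{a_n\}$ of unitaries generating $\FR{A}$ as a $C^*$-algebra (possible since $\FR{A}$ is unital and separable) and use the universal property of $C^*(\mathbb{F}_\infty)$ with respect to its free unitary generators $\{g_n\}$ to produce a surjective unital $*$-homomorphism $q\colon C^*(\mathbb{F}_\infty) \twoheadrightarrow \FR{A}$ sending $g_n \mapsto a_n$. Let $J := \ker(q)$, so $\FR{A} \cong C^*(\mathbb{F}_\infty)/J$ via an isomorphism I will denote $\bar{q}$.

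Next I would apply Lemma \ref{kernelface} directly, with $C^*(\mathbb{F}_\infty)$ playing the role of $\FR{A}$ and with the ideal $J$. Letting $\tilde{q}\colon C^*(\mathbb{F}_\infty) \to C^*(\mathbb{F}_\infty)/J$ be the canonical quotient map, the lemma yields that
\[
\tilde{q}^*\bigl(\HOM_w(C^*(\mathbb{F}_\infty)/J,\,M)\bigr)
\]
is a singly exposed face, and in particular a face, of $\HOM_w(C^*(\mathbb{F}_\infty),M)$. Because $\tilde{q}$ is surjective, Proposition \ref{surin} tells us that $\tilde{q}^*$ is an affine homeomorphism onto this face.

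Finally, the isomorphism $\bar{q}\colon C^*(\mathbb{F}_\infty)/J \to \FR{A}$ induces, via the functoriality established in Proposition \ref{phihat}, an affine homeomorphism $\bar{q}^*\colon \HOM_w(\FR{A},M) \to \HOM_w(C^*(\mathbb{F}_\infty)/J,M)$ (with inverse $(\bar{q}^{-1})^*$; this uses the chain rule and preservation of identities mentioned in the functoriality subsection). Composing, $\tilde{q}^* \circ \bar{q}^*$ is an affine homeomorphism from $\HOM_w(\FR{A},M)$ onto the face $\tilde{q}^*(\HOM_w(C^*(\mathbb{F}_\infty)/J,M))$ of $\HOM_w(C^*(\mathbb{F}_\infty),M)$, which is what is claimed.

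There is no real obstacle here; the only item to be slightly careful about is verifying that the composition of affine homeomorphisms with the singly-exposed-face inclusion is still a face (this is automatic, since an affine homeomorphic image of the entire domain $\HOM_w(\FR{A},M)$ coincides with a face of the codomain, not merely contains one). Everything else is routine bookkeeping with the functor $\HOM_w(\,\cdot\,,M)$.
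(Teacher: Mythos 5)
Your proof is correct and takes exactly the approach the paper indicates: the paper gives no separate proof of Theorem \ref{uniface}, merely the preceding sentence that it follows by combining Lemma \ref{kernelface} with the fact that any separable unital $\FR{A}$ is a quotient of $C^*(\mathbb{F}_\infty)$, together with Proposition \ref{surin}. You have simply filled in the routine bookkeeping (choosing unitary generators, factoring $q=\bar q\circ\tilde q$, and composing the induced affine homeomorphisms), which is what the paper tacitly leaves to the reader.
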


\begin{rmk}
Theorem \ref{nucquot} guarantees that $\HOM_w(C^*(\mathbb{F}_{\infty}),M)$ has extreme points.
\end{rmk}

\section{Ultrapower Situation}\label{ultrasit}

Let $\mathcal{U}$ denote a free ultrafilter on $\mathbb{N}$. We now take the opportunity to extend Brown's construction of the convex structure on $\HOM(N,R^\mathcal{U})$ to a convex structure on $\HOM(\FR{A},M^\mathcal{U})$: the space of unital $*$-homomorphisms $\FR{A} \rightarrow M^\mathcal{U}$ modulo unitary equivalence.  This space has the same metric as its predecessor.  For $\sigma_M: M\otimes R \rightarrow M$ a regular isomorphism, we let \[(\sigma_M)^\mathcal{U}: (M \otimes R)^\mathcal{U} \rightarrow M^\mathcal{U}\] be given by \[(\sigma_M)^\mathcal{U} ((x_i)_\mathcal{U}) = (\sigma_M(x_i))_\mathcal{U}.\]   There is a natural way to embed $M$ into $M^\mathcal{U}$ as cosets of constant sequences: $x \in M \mapsto (x)_\mathcal{U} \in M^\mathcal{U}$. Also, as in Remark 3.2.4 of \cite{topdyn}, there is a natural embedding of $M^\mathcal{U} \otimes R^\mathcal{U}$ in $(M \otimes R)^\mathcal{U}$.  So for $x \in M^\mathcal{U}$ and $y \in R^\mathcal{U}$, the expression $(\sigma_M)^\mathcal{U}(x\otimes y)$ makes sense once we consider $x \otimes y \in (M \otimes R)^\mathcal{U}$ via this natural embedding. We can now define convex combinations in $\HOM(\FR{A},M^\mathcal{U})$ in a way similar to Definition \ref{convexcombo}.

\begin{dfn}
For $[\pi],[\rho] \in \HOM(\FR{A},M^\mathcal{U})$ and $t \in [0,1]$, define 
\begin{equation}
t[\pi] + (1-t)[\rho] = [\sigma_M^\mathcal{U}(\pi \otimes (p)_\mathcal{U} + \rho \otimes (p^\perp)_\mathcal{U})]
\end{equation}
where $\sigma_M: M\otimes R \rightarrow M$ is a regular isomorphism and $p \in R$ is a projection with $\tau(p) = t$.
\end{dfn}

\noindent We leave it to the reader to check that this convex combination is well-defined and satisfies the axioms of a convex-like structure (the proofs are analogous).  This generalization of $\HOM(N,R^\mathcal{U})$ from \cite{topdyn} also retains the characterization of extreme points.  One can see this by following the same reasoning as in \cite{topdyn} and looking at \tql cut-downs\tqr of homomorphisms by projections in the relative commutant.  We provide a definition of a cut-down and the ingredients for establishing this characterization in the general setting without proof -- most of the arguments follow from direct analogy.

\begin{dfn}
For technical reasons, we consider homomorphisms $\pi: \FR{A} \rightarrow (M\otimes R)^\mathcal{U}$.  Given $\pi: \FR{A} \rightarrow (M\otimes R)^\mathcal{U}$, let $q \in \pi(\FR{A})' \cap (M\otimes R)^\mathcal{U}$ be a projection.  We now define the cut-down \[\pi_q: \FR{A} \rightarrow (M \otimes R)^\mathcal{U}\] of $\pi$ by $q$.  Let $p \in R$ be a projection in $R$ with $\tau(p) = \tau(q)$.  Let $v,w \in (M\otimes R\otimes R)^\mathcal{U}$ be a partial isometries such that 
\begin{align*}
v^*v &= (\sigma_{(M\otimes R)}^\mathcal{U})^{-1}(q), & vv^* &= (1 \otimes 1 \otimes p)_\mathcal{U},\\
w^*w &= (\sigma_{(M\otimes R)}^\mathcal{U})^{-1}(q^\perp), & ww^*&=(1 \otimes 1 \otimes p^\perp)_\mathcal{U}.
\end{align*} 
Put $u:= v+w$. Let $\theta_p: pRp \rightarrow R$ be an isomorphism.  Then we let \[\pi_q(\cdot) := (\sigma_{(M\otimes R)}^\mathcal{U})\circ (\text{id}_M\otimes \text{id}_R \otimes \theta_p)^\mathcal{U}\circ (\operatorname{Ad}u)\circ(\sigma_{(M\otimes R)}^\mathcal{U})^{-1}(q\pi(\cdot)).\] 
\end{dfn}

\begin{prop}
Using the above definition of cut-downs, one can verify the following six statements.

\begin{enumerate}

	\item $[\pi_p]$ is independent of choices made in the definition

	\item $[\pi_p] = [(\text{Ad}u \circ \pi)_{u pu^*}]$


	\item Given projections $p,q \in \pi(N)'\cap M^\mathcal{U}$ with $\tau(p) = \tau(q)$, we have that $[\pi_p] = [\pi_q]$ if and only if $p$ is Murray-von Neumann equivalent to $q$ in $\pi(N)'\cap M^\mathcal{U}$.

	\item If $[\pi] = t[\rho_1] + (1-t)[\rho_2]$ then there is a projection $p \in \pi(N)'\cap M^\mathcal{U}$ with trace $t$ such that $[\rho_1] = [\pi_p]$ and $[\rho_2] = [\pi_{p^\perp}]$.

	\item Given any $p \in \pi(N)'\cap M^\mathcal{U}, [\pi] = \tau(p) [\pi_p]  + \tau(p^\perp)[\pi_{p^\perp}]$.

	\item A finite diffuse von Neumann algebra $A$ is a factor if and only if for projections $p,q \in A, \tau(p) = \tau(q) \Rightarrow p$ is Murray-von Neumann equivalent to $q$.

\end{enumerate}

\end{prop}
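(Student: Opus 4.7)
The plan is to mirror the cut-down analysis carried out in Section 3 (and in the proof of Proposition 5.2) of \cite{topdyn} for $\HOM(N,R^\mathcal{U})$, adapting it to the present setting where the domain is a separable unital $C^*$-algebra $\FR{A}$ and the target is the ultrapower of an arbitrary separable McDuff II$_1$-factor $M$. Every statement boils down to a manipulation of projections and partial isometries in the relative commutant $\pi(\FR{A})' \cap (M\otimes R)^\mathcal{U}$, and McDuffness of $M$ guarantees that this commutant is large enough to play the role that $\pi(N)' \cap R^\mathcal{U}$ plays in \cite{topdyn}.

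For (1) I would verify independence from each of the four pieces of data in turn. Independence from the choice of regular isomorphism $\sigma_{(M\otimes R)}$ is inherited from Proposition \ref{pitensor1}(2) and is implemented by a unitary in $(M\otimes R)^\mathcal{U}$. Independence from the choice of projection $p\in R$ of the given trace is absorbed by a partial isometry in $R$, lifted diagonally to the ultrapower. Independence from the choice of $(v,w)$ follows from uniqueness of partial isometries with prescribed source and range projections, up to left/right unitary rotation. Independence from $\theta_p:pRp\to R$ reduces to the fact that any automorphism of $R$ is approximately inner. Statement (2) is then a direct consequence, since $[\pi_p]$ only depends on the unitary equivalence class of the pair $(\pi,p)$.

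For (3)--(5) I would follow the strategy of \cite{topdyn}. The forward direction of (3) is almost by inspection: a Murray--von Neumann equivalence $w\in\pi(\FR{A})'\cap(M\otimes R)^\mathcal{U}$ with $w^*w=p,\,ww^*=q$ can be inserted into the cut-down recipe, producing a unitary that intertwines $\pi_p$ and $\pi_q$. The converse is harder: a unitary equivalence $u\pi_p(\cdot)u^*=\pi_q(\cdot)$ must be pulled back through $\sigma_{(M\otimes R)}^\mathcal{U}$ and recombined with the auxiliary partial isometries to produce a partial isometry implementing $p\sim q$ in the relative commutant. For (5), direct unwinding shows that $\sigma_M^\mathcal{U}(\pi_p\otimes(p_0)_\mathcal{U}+\pi_{p^\perp}\otimes(p_0^\perp)_\mathcal{U})$ is, up to an inner conjugation, the original $\pi$, because the cut-down performs exactly the inverse of the convex-combination slicing. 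Then (4) falls out from (5): if $[\pi]=t[\rho_1]+(1-t)[\rho_2]$, a unitary realizing this equivalence transports $\sigma_M^\mathcal{U}(1_M\otimes(p_0)_\mathcal{U})$ to a projection $q\in\pi(\FR{A})'\cap(M\otimes R)^\mathcal{U}$ of trace $t$, and the well-definedness from (1) identifies $\pi_q$ with $\rho_1$ and $\pi_{q^\perp}$ with $\rho_2$.

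Statement (6) is a standard structural fact: in a factor, comparison theory makes the trace a complete invariant of Murray--von Neumann equivalence, whereas in a non-factor, inequivalent central projections (or, using diffuseness, subprojections thereof) can be chosen to have equal trace. The main obstacle is the converse direction of (3): extracting a partial isometry in $\pi(\FR{A})'\cap(M\otimes R)^\mathcal{U}$ from a unitary equivalence of cut-downs requires careful bookkeeping of the identifications between $(M\otimes R)^\mathcal{U}$, $(M\otimes R\otimes R)^\mathcal{U}$, and the relevant corners, and it is here that both the McDuff hypothesis and the regular isomorphisms are essential.
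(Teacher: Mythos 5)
Your proposal is correct and follows essentially the same approach as the paper: the paper explicitly omits the proof, stating only that ``most of the arguments follow from direct analogy'' with the cut-down analysis in Section 3 and Proposition 5.2 of \cite{topdyn}, which is exactly the strategy you adopt. Your sketch simply spells out in more detail what that analogy entails (independence checks for (1), the partial-isometry bookkeeping for (3)--(5), and the standard comparison-theory fact (6)), but it is the same route.
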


\noindent With the above proposition established, we have the following theorem.

\begin{thm}\label{ultraextreme}
The equivalence class $[\pi] \in \HOM(\FR{A},M^\mathcal{U})$ is extreme if and only if $\pi(\FR{A})'\cap M^\mathcal{U}$ is a factor.
\end{thm}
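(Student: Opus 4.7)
The plan is to follow the six-ingredient template provided by the preceding proposition, essentially replaying Brown's argument from Proposition 5.2 of \cite{topdyn} in this generalized setting. Write $A := \pi(\FR{A})' \cap M^\mathcal{U}$. Note that $A$ is a finite von Neumann algebra and is diffuse (the relative commutant of a separable subalgebra of $M^\mathcal{U}$ is well known to be diffuse), so item (6) of the proposition applies: $A$ is a factor exactly when projections of equal trace in $A$ are Murray--von Neumann equivalent. Combined with (3), this translates factoriality of $A$ into the single statement that the class $[\pi_p]$ depends only on $\tau(p)$.

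For the sufficient direction, I would assume $A$ is a factor and suppose $[\pi] = t[\rho_1] + (1-t)[\rho_2]$ with $t \in (0,1)$. Applying (4) produces a projection $p \in A$ of trace $t$ with $[\rho_1] = [\pi_p]$ and $[\rho_2] = [\pi_{p^\perp}]$. Applying (4) again to the tautological decomposition $[\pi] = t[\pi] + (1-t)[\pi]$ yields a projection $q \in A$ of trace $t$ with $[\pi_q] = [\pi]$, and then $[\pi_{q^\perp}] = [\pi]$ as well. The factor hypothesis gives $p \sim q$ and $p^\perp \sim q^\perp$ in $A$, so (3) forces $[\rho_1] = [\pi_p] = [\pi_q] = [\pi]$ and likewise $[\rho_2] = [\pi]$, confirming that $[\pi]$ is extreme.

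For the necessary direction I would argue contrapositively. If $A$ is not a factor, then $Z(A) \supsetneq \mathbb{C} 1$, and spectral calculus applied to a non-scalar self-adjoint element of $Z(A)$ yields a central projection $z \in Z(A)$ with $0 < \tau(z) < 1$. A short general calculation, which I would record as a lemma, shows that two central projections in any von Neumann algebra are Murray--von Neumann equivalent only if they coincide: from $v^*v = z_1$ and $vv^* = z_2$ with $z_1, z_2$ central, centrality of the $z_i$ forces $v = z_1 v = z_2 v$, hence $z_1 = z_2$. In particular $z \not\sim z^\perp$, so (3) gives $[\pi_z] \neq [\pi_{z^\perp}]$, and (5) presents
\[
[\pi] = \tau(z)[\pi_z] + \tau(z^\perp)[\pi_{z^\perp}]
\]
as a nontrivial convex combination. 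Hence $[\pi]$ is not extreme.

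The main obstacle is not this assembly but the unproved content of the preceding proposition itself, especially the exchange property (3) and the extraction property (4). These will require careful bookkeeping with the partial isometries $v+w$ and the isomorphisms $\sigma_{(M\otimes R)}^\mathcal{U}$ and $(\text{id}_M \otimes \text{id}_R \otimes \theta_p)^\mathcal{U}$ appearing in the definition of $\pi_q$. This is the step where the ultrapower hypothesis does genuine work: it supplies enough projections of every trace, and enough implementing partial isometries between Murray--von Neumann equivalent projections, inside the relative commutant.
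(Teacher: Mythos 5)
Your sufficient direction is clean and matches the structure the paper is implicitly invoking: from $[\pi] = t[\rho_1] + (1-t)[\rho_2]$ you extract $p$ via item (4), compare it to a $q$ of the same trace extracted from the tautological decomposition $[\pi] = t[\pi] + (1-t)[\pi]$, and use factoriality together with (3) to close the loop. That part is fine.

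The necessary direction as written has a genuine gap. You produce a nontrivial central projection $z \in Z(A)$, show $z \not\sim z^\perp$, and then write ``so (3) gives $[\pi_z] \neq [\pi_{z^\perp}]$.'' But item (3) compares cut-downs only for projections of \emph{equal} trace: its hypothesis is $\tau(p) = \tau(q)$. Spectral calculus only hands you some $z$ with $0 < \tau(z) < 1$, and nothing forces $\tau(z) = 1/2$ (the center could be, say, $\mathbb{C}^2$ with minimal projections of traces $0.3$ and $0.7$, in which case there is no central projection of trace $1/2$). When $\tau(z) \neq \tau(z^\perp)$, (3) simply does not apply to the pair $(z, z^\perp)$, and your central-projection lemma, while correct, does not yield the conclusion you need. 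The repair is easy and worth recording: assuming without loss of generality $\tau(z) \leq 1/2$, use diffuseness of $A$ to choose a projection $q \leq z^\perp$ with $\tau(q) = \tau(z)$; then $q \not\sim z$ because $z$ is central (a subequivalent projection of a central $z$ must live under $z$), so (3) does apply to $(z,q)$ and gives $[\pi_z] \neq [\pi_q]$, while (5) writes $[\pi]$ as a convex combination of $[\pi_z],[\pi_{z^\perp}]$ and also of $[\pi_q],[\pi_{q^\perp}]$ with the same coefficients; extremality would force $[\pi_z] = [\pi] = [\pi_q]$, a contradiction. Alternatively, the cleanest route (and the one Brown's Proposition 5.2 in \cite{topdyn} suggests) is to argue the direct implication: if $[\pi]$ is extreme then (5) forces $[\pi_p] = [\pi]$ for \emph{every} projection $p \in A$, so by (3) any two projections of equal trace are Murray--von Neumann equivalent, and (6) then says $A$ is a factor.
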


\begin{rmk}\label{existence1}
The existence of extreme points in $\HOM(\FR{A},M^\mathcal{U})$ remains an open problem.  The existence of extreme points in the context of $\HOM(N,R^\mathcal{U})$ as in \cite{topdyn} is a well-known open question.  The most recent work done on this question can be found in \cite{caplup}.
\end{rmk}

Given any McDuff $M$ and any separable finite hyperfinite factor $N$, we have that any embedding of $N$ into $M^\mathcal{U}$ is unique up to unitary equivalence (Corollary 3.4, \cite{autoultra}).  That is, $\HOM(N,M^\mathcal{U})$ is a single point.  Combining this observation with the above theorem gives the following consequence.

\begin{cor}\label{finitefactorcommutant}
For any McDuff $M$ and any finite hyperfinite factor $N$, any embedding $\pi: N \rightarrow M^\mathcal{U}$ has the property that its relative commutant $\pi(N)'\cap M^\mathcal{U}$ is a factor.
\end{cor}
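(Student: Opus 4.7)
The proof plan is extremely short and essentially follows the sentence preceding the statement. The strategy is to combine the uniqueness-up-to-unitary-equivalence of embeddings of a finite hyperfinite factor into $M^\mathcal{U}$ with the characterization of extreme points in Theorem \ref{ultraextreme}.

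First I would invoke Corollary 3.4 of \cite{autoultra} to conclude that any two embeddings $\pi, \rho: N \to M^\mathcal{U}$ are unitarily equivalent in $M^\mathcal{U}$. (For the type $I_n$ case this is the standard fact that any two unital embeddings of a finite-dimensional algebra into a II$_1$-factor are unitarily equivalent, applied inside $M^\mathcal{U}$; for the hyperfinite II$_1$ case this is precisely the content of the cited corollary.) Hence $\HOM(N,M^\mathcal{U})$ consists of a single equivalence class $[\pi]$.

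Next, since this single-point set is trivially a convex subset of itself with $[\pi]$ as its only point, $[\pi]$ is automatically extreme: any convex combination $t[\rho_1]+(1-t)[\rho_2]$ equal to $[\pi]$ must have $[\rho_1]=[\rho_2]=[\pi]$, simply because there are no other classes available.

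Finally, I apply Theorem \ref{ultraextreme}: since $[\pi]$ is extreme in $\HOM(N,M^\mathcal{U})$, the relative commutant $\pi(N)'\cap M^\mathcal{U}$ is a factor. As $\pi$ was arbitrary and all embeddings are unitarily conjugate (hence have isomorphic relative commutants), this gives the conclusion for every embedding. There is no real obstacle here — the work has all been done in the prior results; the only thing to verify is the triviality that the unique element of a singleton is extreme, which needs no argument.
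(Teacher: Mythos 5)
Your proposal is correct and is precisely the argument the paper gives: the paper observes (citing Corollary 3.4 of \cite{autoultra}) that $\HOM(N,M^\mathcal{U})$ is a single point, so its unique element is trivially extreme, and then applies Theorem \ref{ultraextreme}. No meaningful difference in approach.
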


\noindent Thanks to an observation by S. White, a more general version of the above corollary is available.  In particular, we do not need to require that $M$ is McDuff.

\begin{thm}\label{stuart}
For any separable II$_1$-factor $X$ and any separable finite hyperfinite factor $N$, any embedding $\pi: N \rightarrow X^\mathcal{U}$ has the property that its relative commutant $\pi(N)'\cap X^\mathcal{U}$ is a factor.
\end{thm}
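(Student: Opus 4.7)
The plan is to combine a tower of finite-dimensional subalgebras of $N$ with the countable saturation of the ultrapower $X^\mathcal{U}$, and argue by contradiction on the existence of a nontrivial central projection in $\pi(N)'\cap X^\mathcal{U}$.

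Since $N$ is a separable finite hyperfinite II$_1$-factor, $N \cong R$, so I will write $N = W^*(\bigcup_{n} A_n)$ for an increasing sequence of matrix subalgebras $A_n \cong \mathbb{M}_{k_n}$. For each $n$, $\pi(A_n)$ is a type-I subfactor of the II$_1$-factor $X^\mathcal{U}$, so $X^\mathcal{U} \cong \pi(A_n) \otimes C_n$ with $C_n := \pi(A_n)' \cap X^\mathcal{U}$ a II$_1$-factor (the complementary tensor factor to a type-I subfactor of a factor). The $C_n$ form a decreasing chain whose intersection is exactly $\pi(N)' \cap X^\mathcal{U}$.

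Suppose, for contradiction, that some projection $z \in Z(\pi(N)' \cap X^\mathcal{U})$ is non-scalar, with $t := \tau(z) \in (0,1)$. For each $n$, $z \in C_n$ is a non-scalar element of the factor $C_n$, and hence is not central in $C_n$. A standard partial-isometry argument produces a contraction $y_n \in C_n$ with $\|[z,y_n]\|_2 \geq c_t$, where $c_t > 0$ depends only on $t$: concretely, letting $y_n \in C_n$ be a partial isometry with $y_n^* y_n \leq z$ and $y_n y_n^* \leq 1-z$ of trace $\min(t,1-t)$ (available because $C_n$ is a II$_1$-factor) gives $[z,y_n] = -y_n$ and $\|[z,y_n]\|_2 = \sqrt{\min(t,1-t)}$. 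Critically, this spectral-gap bound is uniform in $n$.

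Now I invoke the countable saturation of $X^\mathcal{U}$ in the language of tracial von Neumann algebras. The type
\[\bigl\{\|y\| \leq 1\bigr\} \cup \bigl\{\|[y,\pi(a)]\|_2 \leq 1/m : a \in S,\ m \in \mathbb{N}\bigr\} \cup \bigl\{\|[z,y]\|_2 \geq c_t\bigr\},\]
where $S$ is a countable $\|\cdot\|_2$-dense subset of $\bigcup_n A_n$, uses only countably many parameters and is finitely consistent by the previous paragraph (any finite subset of the commutation conditions forces $y$ to commute only with some $\pi(A_{n_0})$, and then $y = y_{n_0}$ witnesses consistency). Saturation then produces a realizer $y \in X^\mathcal{U}$ that commutes with every $\pi(a)$, $a \in N$, so $y \in \pi(N)' \cap X^\mathcal{U}$, yet with $[z,y] \neq 0$ --- contradicting $z \in Z(\pi(N)' \cap X^\mathcal{U})$. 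The main point to verify carefully is precisely the uniformity in $n$ of the spectral-gap constant $c_t$, which is what guarantees finite consistency of the type; without such a uniform lower bound the saturation-based argument would collapse.
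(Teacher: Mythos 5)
Your argument is correct, and it takes a genuinely different route from the one in the paper. The paper's proof deduces factoriality from uniqueness of the trace on $\pi(N)'\cap X^\mathcal{U}$: it invokes Lemma 3.21 of \cite{bbstww} (weak-$*$ density of the tracial states $\tau(\pi(x)\cdot)$, $x\in N^+$, $\tau(x)=1$, in the trace space of the relative commutant) and then Dixmier-approximates $x$ in $N$ by a convex combination of unitary conjugates close to $1_N$ to show every such trace coincides with $\tau$. Your proof instead exploits hyperfiniteness directly: you pass to a matrix tower $A_n\nearrow N$, use the classical tensor decomposition $X^\mathcal{U}\cong\pi(A_n)\otimes C_n$ with $C_n=\pi(A_n)'\cap X^\mathcal{U}$ a II$_1$-factor, and extract from the factoriality of each $C_n$ a partial isometry $y_n\in C_n$ witnessing $\|[z,y_n]\|_2=\sqrt{\min(t,1-t)}$, a bound uniform in $n$ --- the crucial point you correctly flag. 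Countable saturation of $X^\mathcal{U}$ then yields a single $y\in\pi(N)'\cap X^\mathcal{U}$ (the $\|\cdot\|_2$-density of $S$ combined with $\|y\|\le 1$ and Kaplansky is what upgrades commutation with $\bigcup A_n$ to commutation with $\pi(N)$) that fails to commute with $z$, contradicting centrality. What each approach buys: your route sidesteps the external density lemma and reads as a self-contained ``no central projections'' argument, cleanly separating the finite-dimensional structure from the model-theoretic saturation; the paper's route does a bit more work with \cite{bbstww} but produces along the way the density statement that becomes condition (3) of Theorem \ref{Rchar}, which your argument does not directly yield. Both are sound; for the theorem as stated they are interchangeable.
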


\begin{proof}
Throughout this proof we will abuse notation by letting $\tau$ denote both the trace on $N$ and the trace on $X^\mathcal{U}$. The proof of this theorem essentially follows from Lemma 3.21 of \cite{bbstww}.  The lemma says, among other things, that the collection of tracial states \[\left\{\tau(\pi(x)\cdot): x \in N^+, \tau(x) = 1\right\}\] on $\pi(N)'\cap X^\mathcal{U}$ is weak-$*$ dense in the trace space of $\pi(N)'\cap X^\mathcal{U}$.  For any fixed $x \in N^+$ with $\tau(x) = 1$, Dixmier approximation gives that for any $\varepsilon > 0$, there exist unitaries $u_1, \dots, u_n \in \mathcal{U}(N)$ and numbers $0\leq \lambda_1,\dots, \lambda_n \leq 1$ with $\sum \lambda_j = 1$ such that \[\Big|\Big|\sum_{j=1}^n \lambda_j u_j x u_j^* - 1_N\Big|\Big| < \varepsilon.\]  Note that for such an $x \in N, \tau(\pi(x)\cdot) = \tau(\pi(\sum_j \lambda_j u_jx u_j^*) \cdot)$.  

We will now show that $\pi(N)' \cap X^\mathcal{U}$ is a factor by showing that it has a unique normalized trace (in particular, the trace induced by the unique trace on $X^\mathcal{U}$).  Let $T$ be a tracial state on $\pi(N)'\cap X^\mathcal{U}$.  Fix $\delta > 0$ and $y \in \pi(N)'\cap X^\mathcal{U}$ with $||y||\leq 1$.  By Lemma 3.21 of \cite{bbstww} there is an $x_y \in N^+$ with $\tau(x_y) =1$ such that \[|T(y) - \tau(\pi(x_y)y)| < \frac{\delta}{2}.\]  For such an $x_y$, let $u_1, \dots, u_n \in \mathcal{U}(N)$ and $0\leq \lambda_1,\dots, \lambda_n \leq 1$ with $\sum_j \lambda_j = 1$ be such that \[\Big|\Big|\sum_{j=1}^n \lambda_j u_j x_y u_j^* - 1_N\Big|\Big| < \frac{\delta}{2}.\]  Then we have 
\begin{align*}
\Big|\tau\Big(\pi\Big(\sum_j \lambda_j u_j x_y u_j^*\Big)y\Big) - \tau(y)\Big| &= \Big|\tau\Big(\pi\Big(\sum_j \lambda_j u_j x_y u_j^* - 1_N\Big)y\Big)\Big|\\
&\leq \Big|\Big|\pi\Big(\sum_j \lambda_j u_j x_y u_j^* - 1_N\Big)y\Big|\Big|\\
&\leq \Big|\Big|\sum_j \lambda_j u_j x_y u_j^* - 1_N\Big|\Big|\cdot ||y||\\
& < \frac{\delta}{2}.
\end{align*}  
Thus,
\begin{align*}
|T(y) - \tau(y)| &\leq |T(y) - \tau(\pi(x_y)y)| + |\tau(\pi(x_y)y) - \tau(y)|\\
& =  |T(y) - \tau(\pi(x_y)y)| + \Big|\tau\Big(\pi\Big(\sum_j \lambda_j u_jx_yu_j^*\Big)y\Big) - \tau(y)\Big|\\
& < \delta.\qedhere
\end{align*}
\end{proof}

\noindent The case where $N=X=R$ is already well-known, but we have not seen the statement as it appears above in the literature.  A similar discussion does appear in Section 2 of \cite{popa} (in particular Theorem 2.1 and Conjecture 2.3.1). There, Popa addresses bicentralizers in ultraproduct von Neumann algebras, while we are concerned with relative commutants (centralizers).  

Corollary 5.3 of \cite{topdyn} says \tql $R$ is the unique separable II$_1$-factor with the property that every embedding into $R^\mathcal{U}$ has factorial commutant.\tqr \, Theorem \ref{stuart} leads us to the following stronger version of Brown's statement.

\begin{thm}\label{Rchar}
Let $N$ be an embeddable separable II$_1$-factor.  The following are equivalent:
\begin{enumerate}
\item $N\cong R$;
\item For any separable II$_1$-factor $X$ and any embedding $\pi: N \rightarrow X^\mathcal{U}$, $\pi(N)'\cap X^\mathcal{U}$ is a factor;
\item For any separable II$_1$-factor $X$ and any embedding $\pi: N\rightarrow X^\mathcal{U}$, the collection of tracial states $\left\{\tau(\pi(x)\cdot): x \in N^+, \tau(x) = 1\right\}$  is weak-$*$ dense in the trace space of $\pi(N)'\cap X^\mathcal{U}$.
\end{enumerate}
\end{thm}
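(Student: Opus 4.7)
The natural strategy is to prove the cycle $(1) \Rightarrow (3) \Rightarrow (2) \Rightarrow (1)$, which reduces the result to two already-available ingredients plus a short Dixmier-approximation computation.

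For $(1) \Rightarrow (3)$, I would appeal to Lemma 3.21 of \cite{bbstww}, the same lemma already invoked inside the proof of Theorem~\ref{stuart}: when $N$ is hyperfinite and $\pi: N \to X^\mathcal{U}$ is any embedding into the ultrapower of a separable II$_1$-factor, the family $\{\tau(\pi(x)\cdot): x \in N^+,\ \tau(x)=1\}$ is automatically weak-$*$ dense in $T(\pi(N)' \cap X^\mathcal{U})$. This is exactly assertion (3) specialized to $N=R$.

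For $(3) \Rightarrow (2)$, I would essentially re-run the Dixmier-approximation argument from the proof of Theorem~\ref{stuart}, which in fact never used hyperfiniteness of $N$ directly but only the density furnished by (3) together with Dixmier's theorem in the finite factor $N$. Concretely: given a tracial state $T$ on $\pi(N)' \cap X^\mathcal{U}$, a contractive $y \in \pi(N)' \cap X^\mathcal{U}$, and $\delta>0$, I would use (3) to select $x \in N^+$ with $\tau(x)=1$ and $|T(y) - \tau(\pi(x)y)| < \delta/2$, then produce a convex combination $\sum_j \lambda_j u_j x u_j^*$ within $\delta/2$ of $1_N$ in operator norm. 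Because $y$ commutes with $\pi(N)$, each $\tau(\pi(u_j x u_j^*)y)$ collapses to $\tau(\pi(x)y)$, so $|\tau(\pi(x)y) - \tau(y)| < \delta/2$, and the triangle inequality yields $|T(y) - \tau(y)| < \delta$. Hence $\pi(N)' \cap X^\mathcal{U}$ carries a unique tracial state and is a factor.

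For $(2) \Rightarrow (1)$, I would specialize to $X = R$: hypothesis (2) then says that every embedding $N \to R^\mathcal{U}$ has factorial relative commutant, and Corollary 5.3 of \cite{topdyn} identifies $R$ as the unique embeddable separable II$_1$-factor with that property, forcing $N \cong R$. The only conceptually nontrivial step in the whole cycle is the input density statement from \cite{bbstww}; everything else is either a direct computation or an appeal to a result already leveraged earlier in the paper, so I expect no genuine obstacle beyond correctly chaining the citations.
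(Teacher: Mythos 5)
Your proposal is correct and takes essentially the same route as the paper: the paper also proves the cycle $(1)\Rightarrow(3)\Rightarrow(2)\Rightarrow(1)$, obtaining $(1)\Rightarrow(3)$ from Lemma 3.21 of \cite{bbstww}, $(3)\Rightarrow(2)$ from the Dixmier-approximation argument in the proof of Theorem~\ref{stuart} (which, as you correctly observe, uses only the density statement and finite-factoriality of $N$, not hyperfiniteness), and $(2)\Rightarrow(1)$ from Corollary 5.3 of \cite{topdyn}. Your unpacking of the collapse $\tau(\pi(u_j x u_j^*)y) = \tau(\pi(x)y)$ via the trace property and $y \in \pi(N)'$ is the same small observation the paper makes en route to the triangle-inequality conclusion.
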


\begin{proof}
((1)$\Rightarrow$ (3)):  This follows from Lemma 3.21 of \cite{bbstww}.

\noindent ((3)$\Rightarrow$ (2)): This follows from the proof of Theorem \ref{stuart}.

\noindent ((2)$\Rightarrow$ (1)): This follows from Corollary 5.3 of \cite{topdyn}.
\end{proof}

\noindent Note that the above theorem implies that for any $y \in \pi(R)'\cap X^\mathcal{U}, \tau(y) = \tau(\pi(x)y)$ for every $x \in R^+$ with $\tau(x) = 1$.

The characterization in Theorem \ref{ultraextreme} of extreme points in the ultrapower case reveals some information on extreme points in the separable $\HOM_w(\FR{A},M)$ setting.  Using the canonical constant-sequence embedding of $M$ into $M^\mathcal{U}$ we get the following map.

\begin{dfn}\label{betadef}
Let \[\beta_{(\FR{A},M)}: \HOM_w(\FR{A}, M) \rightarrow \HOM(\FR{A},M^{\mathcal{U}})\] be given by \[\beta_{(\FR{A},M)}([\pi]) = [\pi^\mathcal{U}]\] where $\pi^\mathcal{U}(a) = (\pi(a))_\mathcal{U}$ for $a \in \FR{A}$.
\end{dfn}

\begin{prop}\label{betainj}
The map $\beta_{(\FR{A},M)}$ is a well-defined affine homeomorphism onto its image.
\end{prop}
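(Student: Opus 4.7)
The plan is to verify the four ingredients---well-definedness, affinity, injectivity, and bicontinuity onto the image---by reducing the last three to a single isometry statement. Fix a generating sequence $\left\{a_k\right\} \subset \FR{A}_{\leq 1}$; let $d_s$ denote the metric of Definition \ref{metric} on $\HOM_w(\FR{A},M)$ and $d_u$ its analogue on $\HOM(\FR{A},M^\mathcal{U})$, with the infimum taken over $\mathcal{U}(M^\mathcal{U})$.

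Well-definedness is immediate: a sequence $\left\{u_n\right\} \subset \mathcal{U}(M)$ witnessing $\pi \sim \rho$ assembles into a single unitary $u := (u_n)_\mathcal{U} \in \mathcal{U}(M^\mathcal{U})$ satisfying $u\rho^\mathcal{U}(a)u^* = \pi^\mathcal{U}(a)$ for every $a \in \FR{A}$.  Affinity is a direct computation: for $a \in \FR{A}$ and a projection $p \in R$, the identity
\[(\sigma_M(\pi(a) \otimes p + \rho(a) \otimes p^\perp))_\mathcal{U} = \sigma_M^\mathcal{U}((\pi(a))_\mathcal{U} \otimes (p)_\mathcal{U} + (\rho(a))_\mathcal{U} \otimes (p^\perp)_\mathcal{U})\]
holds via the canonical embedding $M^\mathcal{U} \otimes R^\mathcal{U} \hookrightarrow (M \otimes R)^\mathcal{U}$ together with the entry-wise definition of $\sigma_M^\mathcal{U}$, which yields $\beta(t[\pi] + (1-t)[\rho]) = t\beta[\pi] + (1-t)\beta[\rho]$.

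The heart of the argument is the claim $d_u(\beta[\pi],\beta[\rho]) = d_s([\pi],[\rho])$.  The inequality $d_u \leq d_s$ is an embedding observation: for $u \in \mathcal{U}(M)$, the constant sequence $(u)_\mathcal{U} \in \mathcal{U}(M^\mathcal{U})$ produces the same summands term by term, so the infimum over $\mathcal{U}(M^\mathcal{U})$ is at most the infimum over $\mathcal{U}(M)$.  For the reverse, fix any $v \in \mathcal{U}(M^\mathcal{U})$ and lift it to a sequence of genuine unitaries $v_n \in \mathcal{U}(M)$ (standard: adjust any bounded lift via polar decomposition and functional calculus).  Then the $\ell^1$-dominator $\left\{2^{-2k}\right\}$ permits the interchange of sum and ultralimit, giving
\[\sum_k \frac{1}{2^{2k}} ||\pi^\mathcal{U}(a_k) - v\rho^\mathcal{U}(a_k) v^*||_2^2 = \lim_{n \to \mathcal{U}} \sum_k \frac{1}{2^{2k}} ||\pi(a_k) - v_n\rho(a_k) v_n^*||_2^2 \geq d_s([\pi],[\rho])^2.\]
Taking the infimum over $v$ yields $d_u \geq d_s$.

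Granting the isometry, $\beta_{(\FR{A},M)}$ is automatically injective with continuous inverse on its image, so combined with affinity it is the desired affine homeomorphism onto its image.  The main technical obstacle is the ultralimit-sum interchange together with the observation that every $v \in \mathcal{U}(M^\mathcal{U})$ admits a representing sequence of genuine unitaries in $M$; both are standard but deserve to be written down carefully.  An alternative route to injectivity is to apply Lemma \ref{unitarysubseq} to the constant sequence $\pi_k \equiv \rho$, extracting $[\rho]=[\pi]$ directly from unitary equivalence of $\pi^\mathcal{U}$ and $\rho^\mathcal{U}$.
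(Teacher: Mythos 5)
Your proof is correct, and it takes a genuinely different route from the paper. The paper handles well-definedness and injectivity by citing Theorem 3.1 of \cite{autoultra} (weak approximate unitary equivalence in $M^\mathcal{U}$ implies unitary equivalence there), and then establishes bicontinuity by a sequential argument: given $[\pi_n^\mathcal{U}] \to [\pi^\mathcal{U}]$, it performs a diagonal extraction through representing sequences of the implementing ultrapower unitaries (following the pattern of Lemma \ref{unitarysubseq}) to produce unitaries in $M$ witnessing $[\pi_n] \to [\pi]$. You instead prove the \emph{isometry} $d_u(\beta[\pi],\beta[\rho]) = d_s([\pi],[\rho])$ outright: the $\leq$ direction via constant-sequence unitaries, and the $\geq$ direction via the facts that every unitary in $M^\mathcal{U}$ lifts to a sequence of genuine unitaries in $M$ and that the geometric weights $2^{-2k}$ let you commute the ultralimit past the sum. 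This is a cleaner unification, since it delivers injectivity and bicontinuity at once, and it establishes a quantitatively stronger conclusion (isometric embedding rather than just homeomorphism onto the image) without invoking a sequence-extraction argument. The two lemmas you flag as ``standard but deserv[ing] to be written carefully'' — the unitary-lift and the dominated-convergence interchange — are both correct and are indeed the only technical points that need to be spelled out for this route to be airtight; your sketch of each is accurate.
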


\begin{proof}
That $\beta_{(\FR{A},M)}$ is continuous and affine is an easy check.  Well-defined and injective follow from Theorem 3.1 of \cite{autoultra}.

It remains to show that if $\beta_{(\FR{A},M)}([\pi_n]) \rightarrow \beta_{\FR{A},M)}([\pi])$ in $\HOM(\FR{A},M^\mathcal{U})$ then $[\pi_n] \rightarrow [\pi]$ in $\HOM_w(\FR{A},M)$.  So suppose that \[[\pi_n^\mathcal{U}] \rightarrow [\pi^\mathcal{U}].\]  That means that there exists homomorphisms $\varphi_n \in [(\pi_n)^\mathcal{U}]$ such that \[\varphi_n(a) \rightarrow (\pi)^\mathcal{U}(a)\] for every $a \in \FR{A}$ in the trace norm on $R^\mathcal{U}$.  Now for $n$ fixed, $\varphi_n \in [(\pi_n)^\mathcal{U}]$ means that there is a unitary $u_n \in \mathcal{U}(R^\mathcal{U})$ such that $\varphi_n(a) = u_n \pi_n(a) u_n^*$.  Without loss of generality, say that $u_n = (u_{n,j})_\mathcal{U}$.  So we have that $\varphi_n(a) = (u_{n,j} \pi_n(a) u_{n,j}^*)_\mathcal{U}$.  

We follow an argument similar to the one found in the proof of Lemma \ref{unitarysubseq}. Let $\left\{a_i\right\} \subset \FR{A}_{\leq 1}$ be dense in the unit ball of $\FR{A}$.  Recursively construct a sequence of positive integers $\left\{N_k\right\}$ in the following way.  Let $N_1 \in \mathbb{N}$ be such that for every $n \geq N_1$, \[||\varphi_n(a_1) - \pi(a_1)||_2 < \frac{1}{2}.\] Let $N_2 > N_1$ be such that for every $n \geq N_2$ and $i =1,2$ \[||\varphi_n(a_i) -\pi(a_i)||_2 < \frac{1}{4}.\]  In general, let $N_k>N_{k-1}$ be such that for every $n \geq N_k$ and $1 \leq i \leq k$  \[||\varphi_n(a_i) - \pi(a_i)||_2 < \frac{1}{2k}.\]

For $N_k \leq n < N_{k+1}$ and $1\leq i \leq k$, let \[L(n,i) = ||\varphi_n(a_i) - \pi(a_i)||_2 = \lim_{j\rightarrow \mathcal{U}}||u_{n,j}\pi_n(a_i)u_{n,j}^* - \pi(a_i)||_2.\]  By our construction, we have $0\leq L(n,i) < \frac{1}{2k}$.  By definition \[\left\{j: \big| ||u_{n,j}\pi_n(a_i)u_{n,j}^*||_2 - L(n,i)\big| < \frac{1}{2k}\right\} \in \mathcal{U}.\]  And since \[\left\{j: \big| ||u_{n,j}\pi_n(a_i)u_{n,j}^*||_2 - L(n,i)\big| < \frac{1}{2k}\right\} \subseteq \left\{j: ||u_{n,j}\pi_n(a_i)u_{n,j}^* - \pi(a_i)||_2 < \frac{1}{k}\right\},\] we get \[\left\{j: ||u_{n,j}\pi_n(a_i)u_{n,j}^* - \pi(a_i)||_2 < \frac{1}{k}\right\} \in \mathcal{U}.\]  So for $N_k \leq n <N_{k+1}$, the intersection \[A_n:= \bigcap_{1\leq i \leq k} \left\{j: ||u_{n,j}\pi_n(a_i)u_{n,j}^* - \pi(a_i)||_2 < \frac{1}{k}\right\}\] is in the ultrafilter $\mathcal{U}$, and hence is nonempty.  Pick $j(1) \in A_1$ and for $n>1$, let $j(n) \in A_n \cap \left\{j > j(n-1)\right\}$ (also nonempty since it is an element of the ultrafilter $\mathcal{U}$).

Let $v_n = u_{n,j(n)}$.  We will now show that for $a \in \FR{A}_{\leq 1}, v_n\pi_n(a)v_n^* \rightarrow \pi(a)$.  Fix $\varepsilon > 0$.  Let $i' \in \mathbb{N}$ be such that $\ds ||a-a_{i'}|| < \frac{\varepsilon}{4}$.  Let $k \in \mathbb{N}$ be such that $i' \leq k$ and $\ds \frac{1}{k} < \frac{\varepsilon}{2}$.  Let $n \geq N_k$; thus $n \in [N_{k+c}, N_{k+c+1})$ for some $c \geq 0$.  Then \[j(n) \in \bigcap_{1\leq i \leq k+c}\left\{j: ||u_{n,j} \pi_n(a_i) u_{n,j}^* - \pi(a_i)||_2 < \frac{1}{k+c}\right\}.\]  So we have
\begin{align*}
||v_n\pi_n(a)v_n^* - \pi(a)||_2  \leq&\;  ||v_n\pi_n(a)v_n^* - v_n\pi_n(a_{i'})v_n^*||_2 + ||v_n\pi_n(a_{i'})v_n^* - \pi(a_{i'})||_2 \;+\\
&\;   ||\pi(a_{i'}) - \pi(a)||_2\\
 < &\; \frac{\varepsilon}{4} + ||u_{n,j(n)} \pi_n(a_{i'}) u_{n,j(n)}^* - \pi(a_{i'})||_2 + \frac{\varepsilon}{4}\\
 <&\;\frac{\varepsilon}{4} + \frac{1}{k+c} + \frac{\varepsilon}{4}\\
 < &\; \varepsilon.
\end{align*}

Since $(\operatorname{Ad}(v_n) \circ \pi_n)\in [\pi_n]$, we have $[\pi_n] \rightarrow [\pi]$.  Thus $\beta_{(\FR{A},R)}$ is an affine homeomorphism onto its image.
\end{proof}

A natural follow-up question would be: \tql is $\beta_{(\FR{A},M)}$ ever surjective?\tqr  The answer is: sometimes.  The following example will show that $\beta_{(\FR{A},M)}$ is not surjective in general.

\begin{exmpl}
This example will show that $\HOM(\FR{A},M^\mathcal{U})$ can strictly contain $\HOM_w(\FR{A},M)$; or in the notation of Definition \ref{betadef}, $\beta_{(\FR{A},M)}$ can fail to be surjective.  Let $\FR{A} = C^*(\mathbb{F}_\infty)$ and $M =R$.  Furthermore, let $N$ be a non-hyperfinite separable embeddable II$_1$-factor.  By Proposition 3.1 of \cite{con}, $\FR{A}$ can be embedded into $N$ (say via $\zeta: \FR{A} \rightarrow N$) such that it is weakly dense in $N$.  Consider the map $\zeta^*: \HOM(N, R^\mathcal{U}) \rightarrow \HOM(\FR{A},R^\mathcal{U})$ given by \[\zeta^*([\pi]) = [\pi\circ \rho].\]  Just as in Proposition \ref{phihat}, $\zeta^*$ is well-defined, continuous, and affine.  It is not hard to see that $\zeta^*$ is additionally injective.  Now, by Theorem A.1 of \cite{topdyn}, we know that $\HOM(N,R^\mathcal{U})$ is nonseparable.  So, since $\zeta^*(\HOM(N,R^\mathcal{U})) \subset \HOM(\FR{A},R^\mathcal{U})$, this means that $\HOM(\FR{A},R^\mathcal{U})$ is nonseparable.  On the other hand, $\HOM_w(\FR{A},R)$ is separable.  So by cardinality considerations, \[\beta_{(\FR{A},R)}(\HOM_w(\FR{A},R)) \subsetneq \HOM(\FR{A},R^\mathcal{U}).\]
\end{exmpl}

With this embedding $\beta_{(\FR{A},M)}$ established, we immediately get a sufficient condition for extreme points in $\HOM_w(\FR{A},M)$.

\begin{thm}\label{comext}
For $\pi: \FR{A}\rightarrow M$, if $\pi^\mathcal{U}(\FR{A})' \cap M^\mathcal{U}$ is a factor, then $[\pi] \in \HOM_w(\FR{A},M)$ is extreme.
\end{thm}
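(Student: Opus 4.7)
The plan is to reduce the statement immediately to the ultrapower characterization of extreme points (Theorem \ref{ultraextreme}) by transporting through the affine injection $\beta_{(\FR{A},M)}$ built in Definition \ref{betadef} and Proposition \ref{betainj}. The hypothesis $\pi^\mathcal{U}(\FR{A})'\cap M^\mathcal{U}$ is a factor says precisely, by Theorem \ref{ultraextreme}, that $[\pi^\mathcal{U}]$ is extreme in $\HOM(\FR{A},M^\mathcal{U})$. So it suffices to argue that an equivalence class whose $\beta$-image is extreme must itself be extreme.

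Concretely, I would argue by contrapositive. Suppose $[\pi] = t[\rho_1] + (1-t)[\rho_2]$ in $\HOM_w(\FR{A},M)$ for some $t \in (0,1)$ and $[\rho_1],[\rho_2] \in \HOM_w(\FR{A},M)$. Applying $\beta_{(\FR{A},M)}$ and using that it is affine (Proposition \ref{betainj}), I obtain
\[
[\pi^\mathcal{U}] = \beta_{(\FR{A},M)}([\pi]) = t\,\beta_{(\FR{A},M)}([\rho_1]) + (1-t)\,\beta_{(\FR{A},M)}([\rho_2]) = t[\rho_1^\mathcal{U}] + (1-t)[\rho_2^\mathcal{U}]
\]
in $\HOM(\FR{A},M^\mathcal{U})$. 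Since $[\pi^\mathcal{U}]$ is extreme there, I conclude $[\pi^\mathcal{U}] = [\rho_1^\mathcal{U}] = [\rho_2^\mathcal{U}]$, and then the injectivity of $\beta_{(\FR{A},M)}$ (also from Proposition \ref{betainj}, which ultimately rests on Theorem 3.1 of \cite{autoultra}) forces $[\pi] = [\rho_1] = [\rho_2]$. Hence $[\pi]$ is extreme in $\HOM_w(\FR{A},M)$.

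There is no real obstacle here: every ingredient has been assembled in the preceding subsection. The only point worth double-checking is that the convex combination in $\HOM(\FR{A},M^\mathcal{U})$ is defined using $(\sigma_M)^\mathcal{U}$ and a constant-sequence projection $(p)_\mathcal{U}$, which is exactly the image under $\beta_{(\FR{A},M)}$ of the convex combination in $\HOM_w(\FR{A},M)$ built from $\sigma_M$ and $p$; this compatibility is precisely what makes $\beta_{(\FR{A},M)}$ affine in Proposition \ref{betainj}. With affineness and injectivity in hand, the argument is a one-line transport, and the theorem follows.
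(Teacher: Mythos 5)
Your proposal is correct and is exactly the argument the paper intends: the theorem is stated right after Proposition \ref{betainj} with the remark that \tql we immediately get\tqr\ this sufficient condition, and the immediate consequence is precisely the transport-through-an-injective-affine-map argument you wrote out. No gaps; the one subtlety you flagged (compatibility of the two convex structures under $\beta_{(\FR{A},M)}$) is indeed what the affineness claim in Proposition \ref{betainj} is asserting.
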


\noindent The converse of the above statement would be true if we can show that \linebreak $\beta_{(\FR{A},M)}(\HOM_w(\FR{A},M))$ is a face of $\HOM(\FR{A},M^\mathcal{U})$.  This question comes down to asking if the cut-down of a constant-sequence homomorphism $\pi^\mathcal{U}$ is itself a constant-sequence homomorphism.

%

We now give an example of a face in $\HOM_w(C^*(\mathbb{F}_\infty),M)$ that does not come from an ideal.

\begin{exmpl}\label{nonidealface}
Let $M = R$, and let $\zeta: C^*(\mathbb{F}_\infty)\rightarrow R$ be an injective \linebreak $*$-homomorphism such that $\zeta(C^*(\mathbb{F}_\infty))$ is weakly dense in $R$ as provided by Proposition 3.1 of \cite{con}.  Then we consider \[\zeta^\mathcal{U}: C^*(\mathbb{F}_\infty)\rightarrow R^\mathcal{U}.\]  Consider $R\subset R^\mathcal{U}$ via the constant embedding. Since $\zeta$ has a dense image in $R$, we get that \[\zeta^\mathcal{U}(C^*(\mathbb{F}_\infty))'\cap R^\mathcal{U} = R'\cap R^\mathcal{U}.\]  It is well-known that $R'\cap R^\mathcal{U}$ is a factor.  Thus $\zeta^\mathcal{U}(C^*(\mathbb{F}_\infty))'\cap R^\mathcal{U}$ is a factor, and by Theorem \ref{comext}, we get that $[\zeta]$ is extreme.  So $\left\{[\zeta]\right\}$ is a face of $\HOM_w(C^*(\mathbb{F}_\infty),R)$ that does not factor through a quotient map.  
\end{exmpl}

%

\subsection{Extreme Points: Amenability in First Argument}\label{amenable1st}

We first note the following theorem.

\begin{thm}\label{uatbetasurj}
If $T(\FR{A}) = \text{UAT}(\FR{A})$ then for any McDuff $M$, \[\HOM_w(\FR{A},M) \cong \HOM(\FR{A},M^\mathcal{U})\] via $\beta_{(\FR{A},M)}$.  In particular, if $\FR{A}$ is nuclear, then $\HOM_w(\FR{A},M) \cong \HOM(\FR{A},M^\mathcal{U})$.
\end{thm}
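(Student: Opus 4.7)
The plan is to leverage Proposition \ref{betainj}, which already provides that $\beta_{(\FR{A},M)}$ is an affine homeomorphism onto its image. So the only task is to prove surjectivity under the hypothesis $T(\FR{A}) = \text{UAT}(\FR{A})$.

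\textbf{Constructing a candidate preimage.} Given $[\pi] \in \HOM(\FR{A},M^\mathcal{U})$, set $T := \tau_{M^\mathcal{U}} \circ \pi \in T(\FR{A})$. By hypothesis and the identification $\text{UAT}(\FR{A}) = T(\FR{A},R)$ recorded just before Theorem \ref{affhomchar}, there exists $\rho_0: \FR{A} \rightarrow R$ with $\tau_R \circ \rho_0 = T$. Fixing a regular isomorphism $\sigma_M: M\otimes R \rightarrow M$, the map $x \mapsto \sigma_M(1_M \otimes x)$ gives an inclusion $R \hookrightarrow M$, and I set $\rho: \FR{A} \rightarrow M$ to be the composition of $\rho_0$ with this inclusion. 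Then $\tau_M \circ \rho = T$, so $\rho^\mathcal{U}: \FR{A} \rightarrow M^\mathcal{U}$ shares the induced trace with $\pi$.

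\textbf{Unitary equivalence via hyperfinite GNS closure.} To conclude $\beta_{(\FR{A},M)}([\rho]) = [\pi]$ I must show $\rho^\mathcal{U}$ is unitarily equivalent to $\pi$ inside $M^\mathcal{U}$. Because $T \in \text{UAT}(\FR{A})$, the GNS von Neumann algebra $\pi_T(\FR{A})''$ is hyperfinite. Since the traces $\tau_{M^\mathcal{U}} \circ \pi$ and $\tau_{M^\mathcal{U}} \circ \rho^\mathcal{U}$ both equal $T$, the von Neumann subalgebras $W^*(\pi(\FR{A}))$ and $W^*(\rho^\mathcal{U}(\FR{A}))$ of $M^\mathcal{U}$ are each canonically and trace-preservingly isomorphic to $\pi_T(\FR{A})''$, producing two trace-preserving normal embeddings of the hyperfinite algebra $\pi_T(\FR{A})''$ into $M^\mathcal{U}$. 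Applying Corollary 3.4 of \cite{autoultra} to these two embeddings yields a unitary in $M^\mathcal{U}$ intertwining them, and restricting to $\FR{A}$ delivers the desired unitary conjugacy of $\pi$ and $\rho^\mathcal{U}$.

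\textbf{The nuclear case and the main obstacle.} The ``in particular'' statement is immediate: nuclearity forces every GNS representation to have hyperfinite weak closure, whence $T(\FR{A}) = T(\FR{A},R) = \text{UAT}(\FR{A})$. The delicate step is the unitary-conjugacy argument in the second paragraph: it hinges on upgrading the $C^*$-level homomorphisms $\pi$ and $\rho^\mathcal{U}$ to normal, trace-preserving embeddings of the hyperfinite $\pi_T(\FR{A})''$ into $M^\mathcal{U}$ so that Corollary 3.4 of \cite{autoultra} is genuinely applicable. Everything else is bookkeeping around the equality $\text{UAT}(\FR{A}) = T(\FR{A},R)$ and the McDuff structure of $M$ used to realize $R$ as a unital subfactor.
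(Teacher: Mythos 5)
Your proof is correct and follows essentially the same route as the paper's: reduce to surjectivity via Proposition \ref{betainj}, lift the trace $T = \tau_{M^\mathcal{U}}\circ\pi$ through the copy of $R\subset M$ provided by the McDuff structure to obtain $\rho:\FR{A}\to M$, and then use hyperfiniteness of the GNS closure $\pi_T(\FR{A})''$ to conclude that $\pi$ and $\rho^\mathcal{U}$ are unitarily conjugate in $M^\mathcal{U}$. The only cosmetic difference is the final step: you invoke Corollary 3.4 of \cite{autoultra} directly (as the paper does in its proof of Theorem \ref{nuctrace}), whereas the paper's proof of Theorem \ref{uatbetasurj} first runs the Theorem \ref{injR} argument to get weak approximate unitary equivalence and then upgrades via Theorem 3.1 of \cite{autoultra}; both are valid routes through the same hyperfiniteness mechanism.
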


\begin{proof}
By Proposition \ref{betainj}, it suffices to show that $\beta_{(\FR{A},M)}$ is surjective.  We will let $\tau$ denote the unique tracial state on both $M$ and $M^\mathcal{U}$. Let $\pi: \FR{A}\rightarrow M^\mathcal{U}$ be given. Let $T \in T(\FR{A})$ be given by $T = \tau \circ \pi$.  Since $T \in \text{UAT}(\FR{A})$, there is a $\rho: \FR{A}\rightarrow M$ so that $T = \tau \circ \rho$.  Now, consider $\rho^\mathcal{U}: \FR{A} \rightarrow M^\mathcal{U}$.  Since the images of $\pi$ and $\rho$ are both hyperfinite, the argument from Theorem \ref{injR} gives that $\pi \sim \rho$.  By Theorem 3.1 of \cite{autoultra} we get that $\pi$ and $\rho$ are unitarily equivalent.  Thus $\beta_{(\FR{A},M)}([\rho]) = [\rho^\mathcal{U}] = [\pi]$.
\end{proof}

We would like to take this chance to show how the strategy of the proof of Theorem \ref{uatbetasurj} can be used to give a concise proof of an equivalent form of Theorem 4.10 from of Hadwin and Li's paper \cite{hadwin-li}.

\begin{thm}[Theorem 4.10, \cite{hadwin-li}]
Let $\left\{M_i\right\}$ be a collection of II$_1$-factors with traces $\tau_i$.  If $\FR{A}$ is either countably generated hyperfinite von Neumann algebra or a separable unital $C^*$-algebra such that all traces give hyperfinite GNS constructions ($T(\FR{A}) = \text{UAT}(\FR{A})$), then for any unital $*$-homomorphism \[\pi: \FR{A} \rightarrow \prod_\mathcal{U} M_i\] there exists unital $*$-homomorphisms $\pi_i: \FR{A} \rightarrow M_i$ such that for every $a \in \FR{A}$, \[\pi(a) = (\pi_i(a))_\mathcal{U}\] and \[\tau_\mathcal{U} \circ \pi = \tau_i \circ \pi_i\] for every $i$ where $\tau_\mathcal{U}$ denotes the trace on the ultraproduct.  

\vspace{.2cm}

\noindent In particular, homomorphisms from separable nuclear $C^*$-algebras into ultraproducts of II$_1$-factors lift to coordinate-wise $*$-homomorphisms.
\end{thm}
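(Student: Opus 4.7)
The plan is to mimic the argument in the proof of Theorem \ref{uatbetasurj}: build candidate coordinate maps $\pi_i: \FR{A} \to M_i$ from a single lift of the induced trace through $R$, assemble them into a map to $\prod_\mathcal{U} M_i$ that shares the trace of $\pi$, and then use hyperfinite uniqueness together with ultrapower saturation to rotate them into literal agreement with $\pi$.

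\textbf{Step 1 (candidate lifts).} Let $T := \tau_\mathcal{U} \circ \pi$. By hypothesis $T \in \text{UAT}(\FR{A})$, so there is $\rho: \FR{A} \to R$ with $\tau_R \circ \rho = T$. Since every II$_1$-factor contains a unital copy of $R$, fix embeddings $\iota_i: R \hookrightarrow M_i$ and provisionally set $\pi_i := \iota_i \circ \rho$; then $\tau_i \circ \pi_i = T$ for every $i$.

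\textbf{Step 2 (comparison with $\pi$).} Assemble $\tilde\pi(a) := (\pi_i(a))_\mathcal{U}: \FR{A} \to \prod_\mathcal{U} M_i$, so that $\tau_\mathcal{U} \circ \tilde\pi = T = \tau_\mathcal{U} \circ \pi$. Since $T \in \text{UAT}(\FR{A})$, both $W^*(\pi(\FR{A}))$ and $W^*(\tilde\pi(\FR{A}))$ endowed with the restricted ultraproduct trace are trace-preservingly isomorphic to the hyperfinite GNS closure of $(\FR{A},T)$ (one uses Kaplansky density to see that the $L^2$-closure of $\pi(\FR{A})$ fills out $W^*(\pi(\FR{A}))$). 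The Kaplansky-density plus finite-dimensional uniqueness argument in the proof of Theorem \ref{injR} then runs verbatim inside the II$_1$-factor $\prod_\mathcal{U} M_i$ (which is a factor because a central element must be represented by a bounded sequence of central elements in the $M_i$, forcing scalars), giving $\pi \sim \tilde\pi$ in the weak approximate unitary equivalence sense. Countable saturation of $\prod_\mathcal{U} M_i$, via the argument of Theorem 3.1 of \cite{autoultra} transplanted from a tracial ultrapower to a general tracial ultraproduct, upgrades this to genuine unitary equivalence $\pi = \operatorname{Ad}(u) \circ \tilde\pi$ for some $u \in \mathcal{U}(\prod_\mathcal{U} M_i)$.

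\textbf{Step 3 (absorbing the unitary).} Represent $u = (u_i)_\mathcal{U}$ with each $u_i \in \mathcal{U}(M_i)$ (a standard saturation statement in tracial ultraproducts) and redefine $\pi_i(a) := u_i \iota_i(\rho(a)) u_i^*$. Traciality of $\tau_i$ preserves $\tau_i \circ \pi_i = T = \tau_\mathcal{U} \circ \pi$ for every $i$, and by construction $(\pi_i(a))_\mathcal{U} = u\tilde\pi(a)u^* = \pi(a)$ for every $a \in \FR{A}$, which is exactly the required lift.

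\textbf{Main obstacle.} The only point requiring genuine care is the transfer of the weak-approximate-to-unitary upgrade from $M^\mathcal{U}$ to the varying ultraproduct $\prod_\mathcal{U} M_i$; however, the argument of Theorem 3.1 of \cite{autoultra} depends only on countable saturation of the target, which holds for any tracial ultraproduct of II$_1$-factors, so the transfer is routine. The countably-generated hyperfinite von Neumann algebra case of the hypothesis reduces to the $C^*$-case by passing to a $\sigma$-weakly dense separable $C^*$-subalgebra and invoking normality of $\pi$ to extend the produced coordinate maps to all of $\FR{A}$; no new ingredient is needed.
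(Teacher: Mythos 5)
Your proposal follows the paper's proof essentially step for step: lift the induced trace $T$ through a hyperfinite model, assemble coordinate candidates $\rho_i$, compare $(\rho_i)_\mathcal{U}$ with $\pi$ via the hyperfinite-uniqueness argument of Theorem \ref{injR}, upgrade weak approximate unitary equivalence to unitary equivalence by an ultraproduct version of Theorem 3.1 of \cite{autoultra}, and absorb the conjugating unitary coordinatewise. One small side remark is off: a central element of $\prod_\mathcal{U} M_i$ need not be represented by a sequence of central elements in the $M_i$ (factoriality of the ultraproduct is instead seen from the Dixmier property), but this is parenthetical and the fact itself is correct, so it does not affect the argument.
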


\begin{proof}
Let $\ds \pi: \FR{A} \rightarrow \prod_\mathcal{U} M_i$ be given.  Then as in the proof of Theorem \ref{uatbetasurj}, let $T = \tau_\mathcal{U} \circ \pi$.  Since $T$ induces a hyperfinite GNS construction, we can find unital $*$-homomorphisms $\rho_i: \FR{A} \rightarrow M_i$ so that $T = \tau_i \circ \rho_i$.  Note that an ultraproduct version of Theorem 3.1 of \cite{autoultra} holds with the same exact proof.  By uniqueness of GNS constructions we have that $\pi$ and $(\rho_i)_\mathcal{U}$ both have hyperfinite images, so as argued above in the proof of Theorem \ref{uatbetasurj}, using the ultraproduct version of Theorem 3.1 from \cite{autoultra}, we get that $\pi$ and $(\rho_i)_\mathcal{U}$ are unitarily equivalent.  Let $u$ be a unitary in $\ds \prod_\mathcal{U} M_i$ such that $\pi = u (\rho_i)_\mathcal{U} u^*$.  We may write $u = (u_i)_\mathcal{U}$ where each $u_i$ is a unitary in $M_i$.  Then we have $\pi = (u_i\rho_iu_i^*)_\mathcal{U}$.  So put $\pi_i = u_i \rho_i u_i^*$, and we are done.
\end{proof}

We now define a map $\tilde{\alpha}_{(\FR{A},M)}: \HOM(\FR{A},M^{\mathcal{U}}) \rightarrow T(\FR{A})$ analogous to $\alpha_{(\FR{A},M)}$ in the following way.

\begin{dfn}
Let \[\tilde{\alpha}_{(\FR{A},M)}: \HOM(\FR{A},M^{\mathcal{U}}) \rightarrow T(\FR{A})\] given by \[\tilde{\alpha}_{(\FR{A},M)}([\pi]) = \tau_{M^\mathcal{U}} \circ \pi\] where $\tau_{M^{\mathcal{U}}}$ is the unique normalized trace of $M^{\mathcal{U}}$.
\end{dfn}

The maps $\alpha_{(\FR{A},M)}$ and $\tilde{\alpha}_{(\FR{A},M)}$ relate naturally to one another via the map $\beta_{(\FR{A},M)}$.  In particular, we have that the diagram

\begin{equation}
\begin{tikzpicture}[baseline=(current bounding box.base)]\label{alphabeta}
	\matrix(m)[matrix of math nodes, row sep=2em,column sep = 2em,minimum width=2em]
		{ \HOM_w(\FR{A},M) & & T(\FR{A})\\
		& & \\
		\HOM(\FR{A},M^\mathcal{U}) & & \\};
	\path[-stealth]
		(m-1-1) edge node [above] {$\alpha_{(\FR{A},M)}$} (m-1-3)
		(m-1-1) edge node [left] {$\beta_{(\FR{A},M)}$} (m-3-1)
		(m-3-1) edge node [right] {\hspace{.1in}$\tilde{\alpha}_{(\FR{A},M)}$} (m-1-3);
\end{tikzpicture}
\end{equation}
commutes.

With the above notation in place, we have the following corollary to Theorem \ref{uatbetasurj}

\begin{cor}
If $T(\FR{A}) = \text{UAT}(\FR{A})$, then for any McDuff $M$, $\HOM(\FR{A},M^\mathcal{U}) \cong T(\FR{A})$ via $\tilde{\alpha}_{(\FR{A},M)}$. 
\end{cor}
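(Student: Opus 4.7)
The plan is to deduce this directly by chasing the commutative triangle \eqref{alphabeta}, using the two preceding results in tandem. Under the hypothesis $T(\FR{A}) = \text{UAT}(\FR{A})$, Theorem \ref{affhomchar} gives that $\alpha_{(\FR{A},M)}: \HOM_w(\FR{A},M) \to T(\FR{A})$ is an affine homeomorphism, while Theorem \ref{uatbetasurj} gives that $\beta_{(\FR{A},M)}: \HOM_w(\FR{A},M) \to \HOM(\FR{A},M^\mathcal{U})$ is an affine homeomorphism (surjectivity being the new content there, injectivity and openness onto the image already recorded in Proposition \ref{betainj}).

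Since the diagram \eqref{alphabeta} commutes, we have
\[
\tilde{\alpha}_{(\FR{A},M)} = \alpha_{(\FR{A},M)} \circ \beta_{(\FR{A},M)}^{-1},
\]
and the right-hand side is a composition of affine homeomorphisms, hence an affine homeomorphism. This is all the proof needs; no further extremality or ultrapower arguments have to be invoked.

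If anything is worth being careful about, it is simply checking that the commutativity of \eqref{alphabeta} is genuine at the level of our hypothesis --- that is, for $[\pi] \in \HOM_w(\FR{A},M)$ we have $\tau_{M^\mathcal{U}} \circ \pi^\mathcal{U} = \tau_M \circ \pi$, which is immediate from the fact that the canonical trace on $M^\mathcal{U}$ restricts to $\tau_M$ on the constant-sequence copy of $M$. There is no real obstacle here; the content is entirely absorbed into Theorems \ref{affhomchar} and \ref{uatbetasurj}. The nuclear special case then follows because nuclearity forces every trace to have hyperfinite GNS closure, so $T(\FR{A}) = \text{UAT}(\FR{A})$ automatically.
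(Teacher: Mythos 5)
Your argument is correct and is precisely the intended proof: the paper leaves this corollary without an explicit proof because it follows immediately from Theorem \ref{affhomchar} ($(4)\Rightarrow(1)$), Theorem \ref{uatbetasurj}, and the commutativity of diagram \eqref{alphabeta}, exactly as you have chased it. The small check you flag — that $\tau_{M^\mathcal{U}}$ restricts to $\tau_M$ on the constant-sequence copy of $M$, so the triangle genuinely commutes — is the right thing to verify and is indeed automatic.
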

%
%

So we can immediately observe the following characterizations of extreme points in $T(\FR{A})$ and $\HOM_w(\FR{A},M)$ when $T(\FR{A}) = \text{UAT}(\FR{A})$.  

\begin{cor}\label{nucchar}
Let $\FR{A}$ be such that $T(\FR{A}) = \text{UAT}(\FR{A})$.

\begin{enumerate}

	\item $T \in T(\FR{A})$ is extreme if and only if $\pi_T(\FR{A})'\cap X^\mathcal{U}$ is a factor where $\pi_T:\FR{A}\rightarrow X^\mathcal{U}$ is a lift of $T$ through $X^\mathcal{U}$ for any separable II$_1$-factor $X$.

	\item The following are equivalent.

		\begin{enumerate}

			\item $[\pi] \in \HOM_w(\FR{A},M)$ is extreme;

			\item $\pi^\mathcal{U}(\FR{A})' \cap M^\mathcal{U}$ is a factor;  

			\item $W^*(\pi(\FR{A}))\subset M$ is a factor.
		
		\end{enumerate}

\end{enumerate}
\end{cor}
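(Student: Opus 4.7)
The plan is to leverage the structural machinery assembled earlier, especially Theorem \ref{uatbetasurj} (which identifies $\HOM_w(\FR{A},M)$ with $\HOM(\FR{A},M^\mathcal{U})$ under the UAT hypothesis), Theorem \ref{ultraextreme} (characterizing extreme points in the ultrapower setting via factoriality of the relative commutant), and Theorem \ref{stuart} (factorial commutant for hyperfinite subfactors of ultrapowers of arbitrary separable II$_1$-factors). The common thread is that the hypothesis $T(\FR{A}) = \text{UAT}(\FR{A})$ forces every tracial GNS closure to be hyperfinite, so whenever such a closure is additionally a factor, Theorem \ref{stuart} becomes directly applicable.

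For part (2), I would prove (a)$\Leftrightarrow$(b), (a)$\Rightarrow$(c), and (c)$\Rightarrow$(b). The first equivalence is immediate from Theorem \ref{uatbetasurj}: the affine homeomorphism $\beta_{(\FR{A},M)}$ transports extreme points to extreme points, and Theorem \ref{ultraextreme} identifies extremality of $[\pi^\mathcal{U}]$ in $\HOM(\FR{A},M^\mathcal{U})$ with factoriality of $\pi^\mathcal{U}(\FR{A})' \cap M^\mathcal{U}$. The implication (a)$\Rightarrow$(c) is exactly Theorem \ref{extfact}. For (c)$\Rightarrow$(b), I would observe that since the image $\pi^\mathcal{U}(\FR{A})$ equals $\pi(\FR{A})$ under the constant-sequence embedding, $\pi^\mathcal{U}(\FR{A})' \cap M^\mathcal{U} = W^*(\pi(\FR{A}))' \cap M^\mathcal{U}$; under the UAT hypothesis $W^*(\pi(\FR{A}))$ is hyperfinite, and if it is additionally a factor, Corollary \ref{finitefactorcommutant} delivers factorial relative commutant.

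For part (1), the forward direction uses that extremality of $T$ implies $\pi_T(\FR{A})''$ is a factor; combined with the UAT hypothesis it is a hyperfinite factor, so Theorem \ref{stuart} yields factoriality of $\pi_T(\FR{A})' \cap X^\mathcal{U}$ for every separable II$_1$-factor $X$ and every lift $\pi_T$. The converse I would argue contrapositively: if $T$ is not extreme, then the center of $\pi_T(\FR{A})''$ contains a projection $z$ with $0 < \tau(z) < 1$; this $z$ lies in $\pi_T(\FR{A})' \cap X^\mathcal{U}$, and the two compressed states $y \mapsto \tau(zy)/\tau(z)$ and $y \mapsto \tau(z^\perp y)/\tau(z^\perp)$ are distinct tracial states on $\pi_T(\FR{A})' \cap X^\mathcal{U}$ (taking different values at $z$ itself), so uniqueness of trace fails and the relative commutant cannot be a factor.

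The main obstacle is really in part (1), reconciling the ``any separable II$_1$-factor $X$'' quantifier with tools that were originally stated only for McDuff targets. This is precisely what Theorem \ref{stuart} was developed to handle, so the reduction is clean once one observes that extremality of $T$ combined with the UAT assumption produces a hyperfinite factor as the GNS closure, bringing Theorem \ref{stuart} into immediate use.
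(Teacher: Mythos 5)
Your proposal is correct and assembles exactly the machinery the paper intends (the paper itself offers no explicit proof, presenting the corollary as something the reader can ``immediately observe'' from Theorems \ref{uatbetasurj}, \ref{ultraextreme}, \ref{extfact}, \ref{stuart}, and Corollary \ref{finitefactorcommutant}). One small remark on the converse direction of part (1): the detour through compressed tracial states is both unnecessary and not fully justified as written --- for $\tau(z\cdot)/\tau(z)$ to be a trace on $\pi_T(\FR{A})'\cap X^\mathcal{U}$ you need $z$ to be central in that relative commutant, which you do not verify. But this is in fact true, and once observed it finishes the argument immediately: any $y\in \pi_T(\FR{A})'\cap X^\mathcal{U}$ commutes with $\pi_T(\FR{A})$, hence (by the bicommutant theorem in $B(L^2(X^\mathcal{U}))$) with $\pi_T(\FR{A})''$, hence with $z\in Z(\pi_T(\FR{A})'')$; since $0<\tau(z)<1$, $z$ is then a nontrivial central projection of $\pi_T(\FR{A})'\cap X^\mathcal{U}$, which therefore cannot be a factor. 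The tracial-state argument can simply be dropped.
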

\noindent Note that the equivalence between (2b) and (2c) is a purely algebraic statement with no reference to $\HOM_w(\FR{A},M)$.

\subsection{Extreme Points: Amenability in Second Argument}\label{amenable2nd}

A satisfying characterization of extreme points is also available when we shift our amenability assumption to the second argument of $\HOM_w(\FR{A},M)$.  We state this in the following theorem.

\begin{thm}\label{hyperchar}
Let $\FR{A}$ be a (not necessarily nuclear) separable unital $C^*$-algebra.  Then given a $*$-homomorphism $\pi: \FR{A} \rightarrow R$, the following are equivalent.

\begin{enumerate}

	\item $[\pi] \in\HOM_w(\FR{A},R)$ is extreme;

	\item $W^*(\pi(\FR{A})) \subset R$ is a factor;

	\item $\pi^\mathcal{U}(\FR{A})' \cap R^\mathcal{U}$ is a factor.

\end{enumerate}
\end{thm}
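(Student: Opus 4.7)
The plan is to prove the cycle $(1)\Rightarrow(2)\Rightarrow(3)\Rightarrow(1)$, where the first and last implications are already in hand: $(1)\Rightarrow(2)$ is exactly Theorem \ref{extfact}, and $(3)\Rightarrow(1)$ is exactly Theorem \ref{comext} (specialized to $M=R$). So the entire content of Theorem \ref{hyperchar} amounts to proving $(2)\Rightarrow(3)$: if $N := W^*(\pi(\FR{A}))$ is a subfactor of $R$, then $\pi^\mathcal{U}(\FR{A})'\cap R^\mathcal{U}$ is a factor.

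The first step would be to observe that any von Neumann subfactor $N$ of $R$ is automatically hyperfinite. Since $R$ carries a (unique) faithful normal tracial state, Takesaki's theorem supplies a trace-preserving conditional expectation $E: R \rightarrow N$. A conditional expectation is a norm-one projection, so injectivity passes from $R$ down to $N$, and by Connes's theorem $N$ is hyperfinite. Hence $N$ is a separable finite hyperfinite factor (either $\mathbb{M}_k$ for some $k$ or the hyperfinite II$_1$-factor). Theorem \ref{stuart}, applied with this $N$ and with ambient factor $X = R$, then tells us that for the embedding $N \hookrightarrow R \hookrightarrow R^\mathcal{U}$ via the constant-sequence inclusion, the relative commutant $N' \cap R^\mathcal{U}$ is a factor.

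It remains to identify $\pi^\mathcal{U}(\FR{A})' \cap R^\mathcal{U}$ with $N' \cap R^\mathcal{U}$. By Definition \ref{betadef}, $\pi^\mathcal{U}(\FR{A})$ is just $\pi(\FR{A})$ viewed inside $R \subset R^\mathcal{U}$ via constant sequences. Commutants in a von Neumann algebra depend only on the $\sigma$-weak closure of the generating set, and since $R$ is $\sigma$-weakly closed in $R^\mathcal{U}$, the $\sigma$-weak closure of $\pi(\FR{A})$ in $R^\mathcal{U}$ sits inside $R$ and therefore coincides with its $\sigma$-weak closure in $R$, which is $N$ by definition. Consequently $\pi^\mathcal{U}(\FR{A})' \cap R^\mathcal{U} = N' \cap R^\mathcal{U}$, and the previous step shows this is a factor, giving (3).

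There is no serious obstacle here once Theorem \ref{stuart} is available; the single point one must be careful about is the automatic hyperfiniteness of arbitrary von Neumann subfactors of $R$, which is exactly what lets us feed $N = W^*(\pi(\FR{A}))$ into Theorem \ref{stuart} without any additional finiteness-of-index or index-theoretic hypothesis. Everything else reduces to the straightforward identification of commutants across the $\sigma$-weakly closed inclusion $R \subset R^\mathcal{U}$.
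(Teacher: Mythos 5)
Your proof is correct and follows essentially the same route as the paper's: $(1)\Rightarrow(2)$ by Theorem~\ref{extfact}, $(3)\Rightarrow(1)$ by Theorem~\ref{comext}, and $(2)\Rightarrow(3)$ by identifying $\pi^\mathcal{U}(\FR{A})'\cap R^\mathcal{U}$ with $W^*(\pi(\FR{A}))'\cap R^\mathcal{U}$ and then invoking Theorem~\ref{stuart} on the separable finite hyperfinite factor $W^*(\pi(\FR{A}))\subset R\subset R^\mathcal{U}$. You simply spell out two small points the paper leaves implicit: why subfactors of $R$ are automatically hyperfinite (trace-preserving conditional expectation plus Connes), and why the constant-sequence inclusion $R\subset R^\mathcal{U}$ lets you pass from the commutant of $\pi^\mathcal{U}(\FR{A})$ to that of its weak closure $N$.
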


\begin{proof}
((1) $\Rightarrow$ (2)): This is just Theorem \ref{extfact}.

\noindent ((2) $\Rightarrow$ (3)): We have that 
\[R \supset W^*(\pi(\FR{A})) \cong W^*(\pi^\mathcal{U}(\FR{A})) \subset R \subset  R^\mathcal{U}\]
and
\[W^*(\pi^\mathcal{U}(\FR{A}))'\cap R^\mathcal{U} = \pi^\mathcal{U}(\FR{A})'\cap R^\mathcal{U}.\]  
And since the factor $W^*(\pi(\FR{A})) \subset R$ must be separable, finite, and hyperfinite, Corollary \ref{finitefactorcommutant} or Theorem \ref{stuart} implies that $\pi^\mathcal{U}(\FR{A})'\cap R^\mathcal{U}$ must be a factor.

\noindent ((3) $\Rightarrow$ (1)): This is just Theorem \ref{comext}.
\end{proof}
\noindent Again, notice that the equivalence of (2) and (3) is a purely algebraic statement.

\begin{rmk}
In Example 6.4(2) of \cite{FHS}, the existence of a locally universal separable II$_1$-factor $S$ was established.  This $S$ has the property that any separable II$_1$-factor embeds into $S^\mathcal{U}$.  Tensoring $S$ with $R$ preserves this property, so we may assume that $S$ is McDuff.  Therefore, we may consider the convex structure $\HOM(N,S^\mathcal{U})$ for \emph{any} separable II$_1$-factor $N$ without any additional embeddability assumptions.
\end{rmk}

\section{Stabilization}\label{stabilization}

The \textquotedblleft McDuffness\textquotedblright of the codomain of $\pi: \FR{A} \rightarrow M$ allows us to coherently define the convex structure on $\HOM_w(\FR{A},M)$.  Only considering McDuff codomains seems to provide some restrictions on our theory and collection of examples. Unfortunately, without a tensor factor of $R$ in the target, it is unclear how to define a convex structure on $\HOM_w(\FR{A},N)$ for a non-McDuff $N$.

A natural way around this obstruction is to stabilize a given non-McDuff codomain.  That is, given a non-McDuff factor $N$ and a $*$-homomorphism \[\pi: \FR{A} \rightarrow N,\] we compose $\pi$ with the embedding \[\text{id}_N \otimes 1_R: N \rightarrow N \otimes R.\]  Using the notation from \S\S \ref{functoriality}, this composition induces the map \[(\text{id}_N \otimes 1_R)_*: \HOM_w(\FR{A},N) \rightarrow \HOM_w(\FR{A}, N\otimes R).\] That is, \[(\text{id}_N \otimes 1_R)_*([\pi]) = [(\text{id}_N \otimes 1_R) \circ \pi] = [\pi \otimes 1_R]\] where \[(\pi\otimes 1_R)(a) = \pi(a) \otimes 1_R.\] It turns out that $(\text{id}_N \otimes 1_R)_*$ is well-defined and injective: \[(\pi \otimes 1_R \sim \rho \otimes 1_R) \Leftrightarrow (\pi \sim \rho).\]  Well-defined is a clear observation.  Showing that $(\text{id}_N \otimes 1_R)_*$ is injective is not obvious at all.  The author would like to thank N. Ozawa for suggesting the proof of Theorem \ref{presineq}.  First we need the following fact established by Haagerup in Section 4 of \cite{newproof} concerning the notion of $\delta$-related $n$-tuples of unitaries.

\begin{dfn}[\cite{newproof}]
Let $N$ be a II$_1$-factor.  For $n \in \mathbb{N}$ and $\delta > 0$, two $n$-tuples $(u_1,\dots,u_n)$ and $(v_1,\dots,v_n)$ of unitaries in $N$ are \emph{$\delta$-related} if there is a sequence $\left\{a_j\right\} \subset N$ with
\begin{align}\label{summing}
\sum_j a_j^*a_j &=1 = \sum_j a_ja_j^*
\end{align}
such that for every $1 \leq k \leq n$,
\begin{align}\label{ineq}
\sum_j ||a_j u_k - v_ka_j||_2^2 &< \delta.
\end{align}
  We say that $\left\{a_j\right\}$ is a sequence that witnesses that $(u_1,\dots,u_n)$ and $(v_1,\dots,v_n)$ are $\delta$-related.
\end{dfn}

\begin{thm}[\cite{newproof}]\label{deltarelated}
Let $N$ be a II$_1$-factor.  For every $n \in\mathbb{N}$ and every $\varepsilon > 0$, there exists a $\delta(n,\varepsilon) > 0$ such that for any two $\delta(n,\varepsilon)$-related $n$-tuples of unitaries $(u_1,\dots,u_n)$ and $(v_1,\dots,v_n)$ in $N$, there exists a unitary $w \in N$ such that for every $1 \leq k \leq n$ \[||wu_k - v_kw||_2 < \varepsilon.\]
\end{thm}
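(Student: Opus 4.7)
The statement is Haagerup's unitary-approximation lemma from Section~4 of \cite{newproof}, a key technical step in his alternative proof of Connes's theorem on the uniqueness of the injective II$_1$-factor. The plan is to reinterpret a $\delta$-relation as an approximate intertwining by a single partial isometry in an amplified II$_\infty$-factor, then descend to a unitary in $N$ via a completely positive map and polar decomposition.

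First I would package the sequence $\{a_j\}$ into a single operator $V := \sum_j a_j \otimes e_{j,1} \in N \otimes B(\ell^2)$. The two summing identities \eqref{summing} are exactly $V^*V = 1 \otimes e_{11}$ and $p := VV^*$ a projection (of the same trace $1$ with respect to $\tau\otimes\operatorname{Tr}$); so $V$ is a partial isometry between Murray--von Neumann equivalent projections inside the II$_\infty$-factor $N \otimes B(\ell^2)$. A direct computation recasts \eqref{ineq} as $\|V(u_k \otimes e_{11}) - (v_k \otimes 1)V\|_{2,\tau\otimes\operatorname{Tr}}^2 < \delta$, and expanding this (using \eqref{summing}) yields
\[
\|v_k - \Phi(u_k)\|_2 < \sqrt\delta \qquad\text{and}\qquad \|u_k - \Psi(v_k)\|_2 < \sqrt\delta,
\]
where $\Phi(x) := \sum_j a_j x a_j^*$ and $\Psi(y) := \sum_j a_j^* y a_j$ are the two trace-preserving unital completely positive maps $N \to N$ associated to the sequence.

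Next I would descend to a partial isometry inside $N$ by polar decomposition. The Kadison--Schwarz inequality gives $\Phi(u_k)\Phi(u_k)^*,\ \Phi(u_k)^*\Phi(u_k) \leq 1$, while $\|v_k - \Phi(u_k)\|_2 < \sqrt\delta$ forces $\|\Phi(u_k)\|_2 > 1 - \sqrt\delta$. A Powers--Størmer-type estimate then shows that $\Phi(u_k)$ is $\|\cdot\|_2$-close to its polar part $w_k \in N$, which is a partial isometry close to a unitary (using that $N$ is a II$_1$-factor to extend $w_k$ on the small complementary projections) and which satisfies $\|w_k u_k - v_k w_k\|_2 = O(\sqrt{\delta})$.

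The main obstacle is upgrading this per-pair argument to handle all $n$ pairs simultaneously, producing a \emph{single} $w \in N$ rather than $n$ distinct $w_k$. I would handle this by amplifying to the II$_1$-factor $M_n(N) = N \otimes M_n(\mathbb{C})$: set $\mathbf{u} := \bigoplus_{k=1}^n u_k$, $\mathbf{v} := \bigoplus_{k=1}^n v_k$ (diagonal unitaries), and $\mathbf{a}_j := a_j \otimes 1_n$. The summing conditions transfer verbatim, and a brief normalization calculation gives
\[
\sum_j \|\mathbf{a}_j \mathbf{u} - \mathbf{v}\mathbf{a}_j\|_{2,\tau\otimes\operatorname{tr}_n}^2 = \frac{1}{n}\sum_k \sum_j \|a_j u_k - v_k a_j\|_2^2 < \delta,
\]
so $(\mathbf{u}, \mathbf{v})$ is already a single $\delta$-related pair in $M_n(N)$, and the previous paragraphs' $n=1$ argument applied inside $M_n(N)$ produces a unitary $\mathbf{w} \in M_n(N)$ with $\|\mathbf{w}\mathbf{u} - \mathbf{v}\mathbf{w}\|_{2,\tau\otimes\operatorname{tr}_n} = O(\sqrt\delta)$. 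The delicate final step is to descend $\mathbf{w}$ back to a unitary $w \in N$ that still approximately intertwines each individual pair $(u_k, v_k)$; here one must exploit the diagonal structure of $\mathbf{u}, \mathbf{v}$ --- either by averaging $\mathbf{w}$ over a suitable group of unitaries commuting with $\mathbf{u}, \mathbf{v}$ and then compressing to a diagonal block, or by a direct compression through a trace-preserving conditional expectation onto a distinguished copy of $N \subset M_n(N)$ followed by a polar correction. Carrying this out and bookkeeping the accumulated constants yields the quantitative dependence $\delta(n,\varepsilon) \asymp \varepsilon^2 / n$. This last descent is the technical heart of the argument, and I would rely on \cite{newproof} for its detailed execution.
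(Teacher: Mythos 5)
First, a point of scope: the paper does not prove Theorem \ref{deltarelated} --- it cites it from Haagerup's \cite{newproof} and uses it as a black box. So there is no internal proof to compare against; the question is whether your sketch is a correct reconstruction of Haagerup's Section 4, and it is not. Your opening computation is fine and is indeed how one would begin: packaging $\{a_j\}$ as $V=\sum_j a_j\otimes e_{j,1}$, writing $\Phi(x)=\sum_j a_jxa_j^*$, and expanding the norm in \eqref{ineq} using \eqref{summing} does give $\|v_k-\Phi(u_k)\|_2^2 \leq \sum_j\|a_ju_k-v_ka_j\|_2^2<\delta$.

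The problems start with the very next sentence. Even for a single pair $(u,v)$, the polar part $w$ of $\Phi(u)$ does \emph{not} satisfy $\|wu-vw\|_2=O(\sqrt\delta)$. What you actually get from $\|v-\Phi(u)\|_2<\sqrt\delta$ and a Powers--St{\o}rmer argument is $w\approx\Phi(u)\approx v$ in $\|\cdot\|_2$ (with error $O(\delta^{1/4})$, incidentally, not $O(\sqrt\delta)$); but $\|wu-vw\|_2 = \|wuw^*-v\|_2$, and $wuw^*$ has no reason to be near $v$ just because $w$ is. Concretely, take $a_j=\lambda_j w_0$ for a fixed unitary $w_0$ and $\sum_j|\lambda_j|^2=1$, so $\Phi=\operatorname{Ad}(w_0)$ and $\delta$-relatedness reads $\|w_0u-vw_0\|_2<\sqrt\delta$, i.e.\ $w_0$ itself is the intertwiner. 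The polar part of $\Phi(u)=w_0uw_0^*$ is $w_0uw_0^*$, and $\|(w_0uw_0^*)u - v(w_0uw_0^*)\|_2$ is, up to $O(\sqrt\delta)$, equal to $\|u-w_0uw_0^*\|_2$, which is not small unless $u$ nearly commutes with $w_0$. So the candidate $w_k$ is the wrong object even for $n=1$.

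The second gap is the matrix amplification, which is circular. With $\mathbf a_j=a_j\otimes 1_n$ the induced c.p.\ map on $M_n(N)$ is $\Phi\otimes\mathrm{id}_n$; it sends the block-diagonal $\mathbf u$ to $\operatorname{diag}(\Phi(u_1),\dots,\Phi(u_n))$, whose polar part is $\operatorname{diag}(w_1,\dots,w_n)$ --- exactly the $n$ separate candidates again, not anything of the form $w\otimes 1_n$. There is no reason for a unitary in $M_n(N)$ approximately intertwining $\mathbf u$ and $\mathbf v$ to be approximately scalar over the matrix factor, and the proposed repair (averaging $\mathbf w$ over diagonal unitaries) merely projects onto the block-diagonal part, which is again $\operatorname{diag}(w_1,\dots,w_n)$. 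Thus the ``delicate final step'' you defer to \cite{newproof} is not bookkeeping: producing a \emph{single} $w\in\mathcal U(N)$ that simultaneously intertwines all $n$ pairs is the whole content of Haagerup's theorem, and his argument does not proceed by reducing $n$ to $1$. The sketch is missing the key idea.
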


\noindent Next we establish the following lemma.

\begin{lem}\label{tensordelta}
Let $N_1$ and $N_2$ be separable II$_1$-factors, and let $(u_1,\dots,u_n)$ and $(v_1,\dots, v_n)$ be two $n$-tuples of unitaries in $N_1$.  Fix $\delta >0$, and let $z \in N_1 \otimes N_2$ be a unitary of the form \[z = \sum_{j=1}^\infty a_j \otimes b_j\] where $\left\{b_j\right\} \subset N_2$ is an orthonormal basis in $L^2(N_2)$.  If $z$ is such that for every $1 \leq k \leq n$, \[||z(u_k\otimes 1_{N_2}) - (v_k\otimes 1_{N_2})z||_2^2 < \delta,\] then $(u_1,\dots,u_n)$ and $(v_1,\dots,v_n)$ are $\delta$-related.  Furthermore, $\left\{a_j\right\}$ is a sequence that witnesses that $(u_1,\dots,u_n)$ and $(v_1,\dots, v_n)$ are $\delta$-related.
\end{lem}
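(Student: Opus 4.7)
The plan is to exploit the orthonormal expansion $z=\sum_j a_j\otimes b_j$ together with the $N_1$-valued slice map $E=\text{id}_{N_1}\otimes \tau_{N_2}:N_1\otimes N_2\to N_1$. Because $\{b_j\}$ is orthonormal in $L^2(N_2)$, we have $\tau_{N_2}(b_i^*b_j)=\delta_{ij}=\tau_{N_2}(b_ib_j^*)$, so $E$ acts as a ``diagonal extractor'' on double sums of the form $\sum_{i,j}x_{ij}\otimes b_i^*b_j$ or $\sum_{i,j}x_{ij}\otimes b_ib_j^*$. This single observation will power both halves of the lemma.

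For \eqref{summing}, I would multiply out the expansion to get
\[z^*z=\sum_{i,j}a_i^*a_j\otimes b_i^*b_j,\qquad zz^*=\sum_{i,j}a_ia_j^*\otimes b_ib_j^*,\]
and then apply $E$ to collapse each double sum onto its diagonal. Since $z$ is unitary, $E(z^*z)=E(zz^*)=1_{N_1}$, giving $\sum_i a_i^*a_i=1_{N_1}=\sum_i a_ia_i^*$, as required.

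For \eqref{ineq}, fix $k$. The commutator expands term-by-term as
\[z(u_k\otimes 1_{N_2})-(v_k\otimes 1_{N_2})z=\sum_j(a_ju_k-v_ka_j)\otimes b_j.\]
Taking the $\|\cdot\|_2$-norm of both sides with respect to $\tau_{N_1}\otimes\tau_{N_2}$ and using orthonormality of $\{b_j\}$ in $L^2(N_2)$ yields
\[\|z(u_k\otimes 1_{N_2})-(v_k\otimes 1_{N_2})z\|_2^2=\sum_j\|a_ju_k-v_ka_j\|_2^2,\]
and the hypothesis bounds the right-hand side by $\delta$.

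The argument is essentially a direct computation, so there is no genuinely hard step. The one bookkeeping subtlety is that the $a_j$ are, a priori, just the Fourier coefficients of $z$ against the basis $\{b_j\}$ (equivalently, $a_j=E(z(1\otimes b_j^*))\in N_1$), so the operator identities $\sum_i a_i^*a_i=1_{N_1}=\sum_i a_ia_i^*$ should be read as the strong-operator limits of the monotone positive partial sums that the slice-map calculation produces. This level of convergence is exactly what Haagerup's definition of $\delta$-relatedness asks for, so nothing further is needed.
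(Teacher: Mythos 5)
Your proof is correct and is essentially the same argument as the paper's: the slice map $\text{id}_{N_1}\otimes\tau_{N_2}$ you use is exactly the conditional expectation $\mathbb{E}_1$ the paper applies to $z^*z$ and $zz^*$ to extract the diagonal, and the inequality \eqref{ineq} is verified by the same Parseval-type expansion of the commutator against the orthonormal $\{b_j\}$. Your closing remark about reading $\sum_j a_j^*a_j = 1$ as a strong-operator limit of the increasing positive partial sums is a reasonable point of care that the paper leaves implicit.
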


\begin{proof}

It suffices to show that $\ds \left\{ a_j\right\}$ is the sequence that witnesses that $(u_1,\dots,u_n)$ and $(v_1,\dots,v_n)$ are $\delta$-related.  First we show that the summing condition \eqref{summing} is satisfied.  This is a consequence of the fact that $\sum_{j} a_j \otimes b_j$ is a unitary.  Let $\mathbb{E}_1$ be the canonical normal conditional expectation \[\mathbb{E}_1: N_1 \otimes N_2 \rightarrow N_1 \otimes \mathbb{C}1_{N_2}\] onto the first tensor factor.  So on simple tensors, $\mathbb{E}_1(x \otimes y) = x \otimes \tau(y)$.  So we have

\begin{align*}
1 &= \mathbb{E}_1(z^*z)\\
& = \mathbb{E}_1(\sum_{j',j} a_{j'}^*a_j \otimes b_{j'}^*b_j)\\
& = \sum_{j',j} a_{j'}^*a_j \otimes \tau(b_{j'}^*b_j)\\
& = \sum_j a_{j}^*a_j \otimes 1.
\end{align*}

\noindent Thus $\sum_j a_j^*a_j = 1$ and by a symmetric argument, $\sum_j a_ja_j^* = 1$.

\noindent To check \eqref{ineq}, fix $1 \leq k \leq n$ and observe

\begin{align*}
\sum_{j} \Big|\Big|a_ju_k - v_ka_j\Big|\Big|_2^2&= \sum_{j} \tau\Big(\big(a_ju_k - v_ka_j\big)^*\big(a_ju_k - v_ka_j\big)\Big)\\
 &= \sum_{j',j} \tau\Big(\big((a_{j'}u_k - v_ka_{j'})^*(a_ju_k - v_ka_j)\big)\otimes b_{j'}^*b_j\Big)\\
 & = \tau\Big(\Big(\sum_{j'} (a_{j'}u_k - v_ka_{j'})\otimes b_{j'}\Big)^*\Big(\sum_{j}(a_ju_k - v_ka_j)\otimes b_j\Big)\Big)\\
 & = ||z(u_k \otimes 1_{N_2}) - (v_k\otimes 1_{N_2})z||_2^2\\ 
 & < \delta.\qedhere
\end{align*}

\end{proof}

\begin{thm}\label{presineq}
Let $N_1$ and $N_2$ be arbitrary separable II$_1$-factors. Given \linebreak $*$-homomorphisms $\pi,\rho: \FR{A}\rightarrow N_1$, consider $\pi \otimes 1_{N_2}, \rho \otimes 1_{N_2}: \FR{A} \rightarrow N_1 \otimes N_2$. If $\pi \otimes 1_{N_2} \sim \rho \otimes 1_{N_2}$ then $\pi \sim \rho$.
\end{thm}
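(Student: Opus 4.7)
The plan is to combine Lemma \ref{tensordelta} with Haagerup's Theorem \ref{deltarelated} in a direct way, exploiting the freedom granted by Remark \ref{metricgenerators} to use a generating sequence of \emph{unitaries} in $\FR{A}$.

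First I would reduce to the equivalent finitary formulation of $\sim$ from the introduction: it suffices to show that for every finite tuple $(a_1,\dots,a_n)$ of unitaries in $\FR{A}$ and every $\varepsilon>0$, there is a unitary $w\in N_1$ with $\|w\pi(a_k)w^*-\rho(a_k)\|_2<\varepsilon$ for each $k$. By multiplying through by $w$ on the right, this is equivalent to $\|w\pi(a_k)-\rho(a_k)w\|_2<\varepsilon$ for each $k$, which is the exact conclusion of Haagerup's theorem applied to the pair of $n$-tuples of unitaries $(\pi(a_1),\dots,\pi(a_n))$ and $(\rho(a_1),\dots,\rho(a_n))$ in $N_1$.

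Next, given $n$ and $\varepsilon$, I would fix $\delta=\delta(n,\varepsilon)$ as produced by Theorem \ref{deltarelated} in $N_1$, and invoke the hypothesis $\pi\otimes 1_{N_2}\sim\rho\otimes 1_{N_2}$ to obtain a unitary $z\in N_1\otimes N_2$ such that
\[\|z(\pi(a_k)\otimes 1_{N_2})-(\rho(a_k)\otimes 1_{N_2})z\|_2^2<\delta\qquad (1\le k\le n).\]
Fix an orthonormal basis $\{b_j\}\subset N_2$ of $L^2(N_2)$ and expand $z=\sum_j a_j\otimes b_j$ with $a_j\in N_1$. Lemma \ref{tensordelta} then says precisely that $(\pi(a_1),\dots,\pi(a_n))$ and $(\rho(a_1),\dots,\rho(a_n))$ are $\delta$-related in $N_1$, witnessed by the sequence $\{a_j\}$. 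Applying Theorem \ref{deltarelated} produces the required $w\in\mathcal{U}(N_1)$.

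The only genuine subtlety is ensuring that we are comparing unitaries in $N_1$, so that Haagerup's machinery applies; this is handled up front by choosing the generating sequence of $\FR{A}$ to be unitaries (Remark \ref{metricgenerators}) and noting that $\pi,\rho$ are $*$-homomorphisms so $\pi(a_k),\rho(a_k)$ are unitaries in $N_1$. Everything else is a mechanical assembly of the two preceding results — Lemma \ref{tensordelta} converts the $N_1\otimes N_2$-level intertwining datum into an $N_1$-level $\delta$-relatedness datum, and Haagerup converts that into an approximate inner conjugation. Because the passage is uniform in the finite tuple and $\varepsilon$, the resulting approximate conjugators witness $\pi\sim\rho$.
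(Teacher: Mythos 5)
Your proposal is correct and follows the paper's own proof essentially verbatim: reduce to unitary generators, pull a near-intertwining unitary $z \in N_1\otimes N_2$ from the hypothesis with threshold $\delta(n,\varepsilon)$, expand $z=\sum_j a_j\otimes b_j$ against an orthonormal basis of $L^2(N_2)$, invoke Lemma \ref{tensordelta} to get $\delta$-relatedness of the $n$-tuples $(\pi(u_k))$ and $(\rho(u_k))$ in $N_1$, and apply Haagerup's Theorem \ref{deltarelated}. One small caution: you overload the symbol $a_j$ for both the unitaries in $\FR{A}$ and the coefficients of $z$ in $N_1$ (the paper uses $u_k$ for the former), and the reduction to unitaries is really just the fact that a unital $C^*$-algebra is generated by its unitaries rather than anything specific to Remark \ref{metricgenerators}.
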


\begin{proof}
Because a unital $C^*$-algebra is generated by its unitaries, it suffices to show that for any $\varepsilon > 0$ and any set of unitaries $u_1,\dots,u_n \in \FR{A}$, there exists a unitary $w\in N$ such that for every $1 \leq k \leq n$, \[||w\pi(u_k) - \rho(u_k)w||_2 < \varepsilon.\]  

Fix $\varepsilon > 0$ and unitaries $u_1,\dots, u_n \in \FR{A}$.  Let $\delta(n,\varepsilon)>0$ be such that if $(v_1,\dots,v_n)$ and $(v_1',\dots,v_n')$ are $\delta(n,\varepsilon)$-related $n$-tuples of unitaries in $N_1$, then there is a unitary $w \in N_1$ such that for every $1 \leq k \leq n$, \[||wv_k - v_k'w||_2 < \varepsilon\] as guaranteed by Theorem \ref{deltarelated}.  Thus we will be done if we show that $(\pi(u_1),\dots, \linebreak \pi(u_n))$ and $(\rho(u_1),\dots,\rho(u_n))$ are $\delta(n,\varepsilon)$-related.

Since $\pi \otimes 1_{N_2} \sim \rho \otimes 1_{N_2}$, we can find a unitary $z \in N_1 \otimes N_2$ such that for every $1 \leq k \leq n$, \[||z(\pi(u_k)\otimes 1_{N_2}) - (\rho(u_k)\otimes 1_{N_2})z||_2^2 < \delta(n, \varepsilon).\]  By standard approximation arguments we may assume that \[z = \sum_{j = 1}^\infty a_j \otimes b_j\] where $\left\{b_j\right\}\subset N_2$ is an orthonormal basis in $L^2(N_2)$ (guaranteed by Gram-Schmidt). Then by Lemma \ref{tensordelta} we have that $(\pi(u_1),\dots,\pi(u_n))$ and $(\rho(u_1),\dots, \rho(u_n))$ are $\delta(n,\varepsilon)$-related.

\end{proof}

\noindent  The following theorem appears as Corollary 3.3 in \cite{connes}; we mention it here because one can use the strategy from Theorem \ref{presineq} for an alternative proof.

\begin{thm}[\cite{connes}]
Let $N_1$ and $N_2$ be separable II$_1$ factors, and let $\theta_i \in \text{Aut}(N_i),  \linebreak i = 1,2$.  Then $\theta_1 \otimes \theta_2 \in \text{Aut}(N_1\otimes N_2)$ is approximately inner if and only if $\theta_1$ and $\theta_2$ are both approximately inner.
\end{thm}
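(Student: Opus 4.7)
The plan is to derive this theorem as a direct corollary of Theorem \ref{presineq}, which has already done all of the technical work. The forward implication is routine: if $u_n \in \mathcal{U}(N_1)$ and $v_n \in \mathcal{U}(N_2)$ approximate $\theta_1$ and $\theta_2$ respectively in the point-$\|\cdot\|_2$ topology, then on elementary tensors $(u_n \otimes v_n)(x_1 \otimes x_2)(u_n \otimes v_n)^* \to \theta_1(x_1) \otimes \theta_2(x_2)$ in $\|\cdot\|_2$, and density of the algebraic tensor product together with the uniform norm bound $\|u_n \otimes v_n\| = 1$ extends this to all of $N_1 \otimes N_2$.

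For the nontrivial direction, assume $\theta_1 \otimes \theta_2$ is approximately inner, witnessed by unitaries $w_n \in \mathcal{U}(N_1 \otimes N_2)$ with $\|w_n y w_n^* - (\theta_1 \otimes \theta_2)(y)\|_2 \to 0$ for every $y \in N_1 \otimes N_2$. I will show that $\theta_1$ is approximately inner; the argument for $\theta_2$ is entirely symmetric (apply it to elements of the form $1_{N_1} \otimes x$). Specializing to $y = x \otimes 1_{N_2}$ for $x \in N_1$ yields $\|w_n(x \otimes 1_{N_2})w_n^* - \theta_1(x) \otimes 1_{N_2}\|_2 \to 0$, which is precisely the statement that the two $*$-homomorphisms $\mathrm{id}_{N_1} \otimes 1_{N_2}, \theta_1 \otimes 1_{N_2} : N_1 \to N_1 \otimes N_2$ are weakly approximately unitarily equivalent. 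Applying Theorem \ref{presineq} with $\FR{A} := N_1$ (viewed as a separable unital $C^*$-algebra), $\pi := \mathrm{id}_{N_1}$, and $\rho := \theta_1$ then yields $\mathrm{id}_{N_1} \sim \theta_1$ as $*$-homomorphisms $N_1 \to N_1$, which is exactly the definition of $\theta_1$ being approximately inner.

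The hard part, namely descending an approximate intertwiner from the ampliated algebra $N_1 \otimes N_2$ down to $N_1$, has already been absorbed into the proof of Theorem \ref{presineq} via Lemma \ref{tensordelta} and Haagerup's $\delta$-related-tuples technology (Theorem \ref{deltarelated}). The only content of the present proof is the essentially cosmetic observation that \textquotedblleft $\theta_1 \otimes \theta_2$ is approximately inner\textquotedblright{} unpacks, upon evaluating at elements of the form $x \otimes 1_{N_2}$, to weak approximate unitary equivalence of two concrete $*$-homomorphisms of the form $\pi \otimes 1_{N_2}$ and $\rho \otimes 1_{N_2}$ — which is precisely the input that Theorem \ref{presineq} consumes.
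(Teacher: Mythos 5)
Your proof is correct and is exactly the argument the paper has in mind: the paper states this result with the remark that ``one can use the strategy from Theorem \ref{presineq} for an alternative proof,'' and your reduction of ``$\theta_1\otimes\theta_2$ approximately inner'' to $\mathrm{id}_{N_1}\otimes 1_{N_2}\sim\theta_1\otimes 1_{N_2}$ (and symmetrically for $\theta_2$, after flipping the tensor factors) is precisely that strategy. One small imprecision: the parenthetical ``viewed as a separable unital $C^*$-algebra'' is not literally true, since a II$_1$-factor is never norm-separable; this is harmless because the proof of Theorem \ref{presineq} --- via Lemma \ref{tensordelta} and Haagerup's Theorem \ref{deltarelated} --- never uses norm-separability of the domain, and indeed the paper itself invokes Theorem \ref{presineq} with a von Neumann algebra domain in the proof of Theorem \ref{sepchar}; alternatively one can pass to a $||\cdot||_2$-dense norm-separable $C^*$-subalgebra of $N_1$ and use $||\cdot||_2$-separability of $(N_1)_{\leq 1}$ to upgrade the finite-set approximation to a single approximating sequence witnessing $\theta_1\in\overline{\text{Inn}(N_1)}$.
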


%
%
%

\begin{exmpl}
It is well-known that $L(\mathbb{F}_2)$ has non-approximately inner automorphisms (e.g. exchanging generators).  Let $\beta$ be such a non-approximately inner automorphism of $L(\mathbb{F}_2)$.  Considering $C^*_r(\mathbb{F}_2) \subset L(\mathbb{F}_2)$, we have \[\pi := \text{id}_{C^*_r(\mathbb{F}_2)} \nsim \beta \circ \text{id}_{C^*_r(\mathbb{F}_2)} =: \rho.\]  Then by Theorem \ref{presineq} we have that $[\pi \otimes 1_R] \neq [\rho\otimes 1_R]$.  Since $\HOM_w(C^*_r(\mathbb{F}_2),L(\mathbb{F}_2) \otimes R)$ is convex, there is at least an interval's worth, \[\left\{ t[\pi \otimes 1_R] + (1-t)[\rho \otimes 1_R]: t\in [0,1]\right\},\] of inequivalent $*$-homomorphisms of $C^*_r(\mathbb{F}_2)$ into $L(\mathbb{F}_2) \otimes R$.
\end{exmpl}

For the next theorem, we will use the following standard fact about complete metric spaces.

\begin{prop}\label{metricprop}
Let $(X,d)$ and $(Y,d')$ be complete metric spaces.  If $\varphi: X \rightarrow Y$ satisfies the following conditions
\begin{enumerate}
\item $\varphi$ is continuous, 
\item $\varphi$ is injective,
\item $\left\{\varphi(x_n)\right\}$ is Cauchy in $d' \Rightarrow \left\{x_n\right\}$ is Cauchy in $d$;
\end{enumerate}
then $\varphi$ is a homeomorphism onto its image and $\varphi(X)$ is closed in $Y$.
\end{prop}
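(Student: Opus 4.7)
The plan is a direct unwinding of the three hypotheses using completeness. First I would establish that $\varphi(X)$ is closed in $Y$, and then use essentially the same argument to show that $\varphi^{-1}: \varphi(X) \to X$ is continuous.

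For closedness: take a sequence $y_n = \varphi(x_n)$ in $\varphi(X)$ with $y_n \to y$ in $(Y,d')$. The convergent sequence $\{y_n\}$ is Cauchy, so by hypothesis (3) the sequence $\{x_n\}$ is Cauchy in $(X,d)$. Completeness of $X$ yields $x \in X$ with $x_n \to x$, and continuity of $\varphi$ gives $\varphi(x_n) \to \varphi(x)$. Uniqueness of limits in $Y$ forces $y = \varphi(x) \in \varphi(X)$.

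For the homeomorphism claim, $\varphi: X \to \varphi(X)$ is a continuous bijection (injective by (2), surjective onto its image tautologically), so it remains to check that $\varphi^{-1}$ is continuous. Suppose $\varphi(x_n) \to \varphi(x)$ in $\varphi(X)$. Again $\{\varphi(x_n)\}$ is Cauchy, so $\{x_n\}$ is Cauchy by (3), hence converges to some $x' \in X$. Continuity of $\varphi$ gives $\varphi(x_n) \to \varphi(x')$, so $\varphi(x') = \varphi(x)$, and injectivity gives $x' = x$. Thus $x_n \to x$, proving sequential continuity of $\varphi^{-1}$; since we are working in metric spaces, this suffices.

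There is no real obstacle here — the only subtlety is recognizing that hypothesis (3), which is stated only for Cauchy sequences, is exactly what converts \emph{convergence} of $\{\varphi(x_n)\}$ into a Cauchy (and hence convergent) sequence $\{x_n\}$ in the complete space $X$. The same observation powers both conclusions, which is why they appear as a single proposition.
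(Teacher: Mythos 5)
Your proof is correct, and it is the standard argument. Note that the paper itself does not include a proof of this proposition — it is simply cited as a ``standard fact about complete metric spaces'' and applied in the proof of Theorem \ref{nonmcd} — so there is no written proof in the paper to compare against. One small observation worth making explicit: your argument only uses completeness of $X$; completeness of $Y$ (though assumed in the hypotheses) is never invoked. The two conclusions do indeed fall out of the same mechanism, as you note: hypothesis (3) converts a convergent image sequence into a Cauchy (hence convergent) preimage sequence, and then continuity and injectivity of $\varphi$ pin down where it lands.
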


\begin{thm}\label{nonmcd}
For any two separable II$_1$-factors $N_1$ and $N_2$, the map \[(\text{id}_{N_1} \otimes 1_{N_2})_*: \HOM_w(\FR{A},N_1) \rightarrow \HOM_w(\FR{A}, N_1\otimes {N_2})\] is a homeomorphism onto its image.  Furthermore, $(\text{id}_{N_1} \otimes 1_{N_2})_*(\HOM_w(\FR{A},N_1))$ is closed in $\HOM_w(\FR{A},N_1\otimes {N_2})$. In particular, we may consider $\HOM_w(\FR{A},N_1)$ as a closed subset of $\HOM_w(\FR{A},N_1\otimes {N_2})$.
\end{thm}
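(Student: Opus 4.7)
The plan is to apply Proposition~\ref{metricprop} to $\varphi = (\text{id}_{N_1} \otimes 1_{N_2})_*$. Both $\HOM_w(\FR{A},N_1)$ and $\HOM_w(\FR{A}, N_1\otimes N_2)$ are complete by Proposition~\ref{separable}, so it suffices to establish continuity, injectivity, and reflection of Cauchy sequences. Following Remark~\ref{metricgenerators}, I would fix the generating sequences used to define both metrics to be sequences $\{a_k\} \subset \FR{A}_{\leq 1}$ of \emph{unitaries}; this is essential because Haagerup's $\delta$-related unitaries machinery (Theorem~\ref{deltarelated}) is stated for unitaries.

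Well-definedness and continuity of $\varphi$ are immediate from the isometric identity $\|x \otimes 1_{N_2}\|_2 = \|x\|_2$ relative to the unique traces. Indeed, if $\{u_n\} \subset \mathcal{U}(N_1)$ witnesses $\pi \sim \rho$ in $N_1$, then $\{u_n \otimes 1_{N_2}\}$ witnesses $\pi \otimes 1_{N_2} \sim \rho \otimes 1_{N_2}$ in $N_1 \otimes N_2$, and pointwise $\|\cdot\|_2$-convergence is preserved under tensoring with $1_{N_2}$. Injectivity is precisely the content of Theorem~\ref{presineq}.

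The substantive step is Cauchy reflection. Given $\varepsilon > 0$, I would first choose $K$ large enough that the crude bound $\|w\pi(a_k)w^* - \rho(a_k)\|_2 \leq 2$ (valid because each $a_k$ is a unitary) forces the tail $4\sum_{k>K} 2^{-2k} < \varepsilon^2/2$. Next, pick $\varepsilon' > 0$ with $(\varepsilon')^2 < \varepsilon^2/2$, and let $\delta(K, \varepsilon')$ be the threshold supplied by Theorem~\ref{deltarelated}. Set $\delta = 2^{-K}\sqrt{\delta(K, \varepsilon')}$. If $d'([\pi \otimes 1_{N_2}], [\rho \otimes 1_{N_2}]) < \delta$, then there is a unitary $z \in N_1 \otimes N_2$ realizing $\|z(\pi(a_k) \otimes 1_{N_2}) - (\rho(a_k) \otimes 1_{N_2})z\|_2^2 < \delta(K, \varepsilon')$ for each $1 \leq k \leq K$. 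Approximating $z$ by a unitary with expansion $\sum_j x_j \otimes b_j$ along an orthonormal basis $\{b_j\}$ of $L^2(N_2)$ as in Lemma~\ref{tensordelta} (guaranteed by Gram--Schmidt), that lemma yields that the $K$-tuples $(\pi(a_1), \dots, \pi(a_K))$ and $(\rho(a_1), \dots, \rho(a_K))$ are $\delta(K, \varepsilon')$-related in $N_1$. Theorem~\ref{deltarelated} then produces a unitary $w \in N_1$ with $\|w\pi(a_k)w^* - \rho(a_k)\|_2 < \varepsilon'$ for all $1 \leq k \leq K$; combining the head and tail bounds gives $d([\pi], [\rho])^2 < (\varepsilon')^2 + \varepsilon^2/2 < \varepsilon^2$, completing the Cauchy reflection.

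The main obstacle is precisely this translation from closeness in the tensor product to closeness in $N_1$: a priori the witnessing unitary $z \in N_1 \otimes N_2$ has no reason to be close to any unitary in $N_1 \otimes 1_{N_2}$. Haagerup's theorem on $\delta$-related unitaries is tailor-made for this situation, and Lemma~\ref{tensordelta} (already established en route to Theorem~\ref{presineq}) serves as the bridge between the tensor-product hypothesis and the hypothesis of Haagerup's theorem. Once Cauchy reflection is in hand, Proposition~\ref{metricprop} simultaneously delivers the homeomorphism statement and the closedness of the image, so no further work is required.
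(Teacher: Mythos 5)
Your proposal is correct and follows essentially the same route as the paper's proof: choose a generating sequence of unitaries for the metrics (as anticipated in Remark~\ref{metricgenerators}), use Theorem~\ref{presineq} for injectivity and Proposition~\ref{psitilde} for continuity, and establish Cauchy reflection by feeding a head/tail split of the metric sum through Lemma~\ref{tensordelta} and Haagerup's Theorem~\ref{deltarelated}, then invoke Proposition~\ref{metricprop}. The only differences are cosmetic choices of constants (you bound the head of the weighted sum by $(\varepsilon')^2 \sum 4^{-k} < (\varepsilon')^2$ rather than the paper's $J\cdot \varepsilon^2/(2J)$ via $\delta(J,\varepsilon/\sqrt{2J})$), which are equivalent in substance.
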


\begin{proof}
Let $\left\{u_k\right\}$ be a sequence of unitaries that generate $\FR{A}$.  As in Remark \ref{metricgenerators}, we can use these unitaries to define metrics $d_{N_1}$ and $d_{N_1\otimes {N_2}}$ on $\HOM_w(\FR{A},N_1)$ and $\HOM_w(\FR{A},N_1\otimes {N_2})$ respectively.  That is,
\[d_{N_1}([\pi],[\rho]) = \inf_{v \in \mathcal{U}(N_1)} \Big(\sum_{k=1}^{\infty} \frac{1}{2^{2k}} || v\pi(u_k) -  \rho(u_k)v||^2_2\Big)^{\frac{1}{2}}\]
and
\[d_{N_1\otimes {N_2}}([\pi],[\rho]) = \inf_{z \in \mathcal{U}(N_1\otimes {N_2})} \Big(\sum_{k=1}^{\infty} \frac{1}{2^{2k}} || z\pi(u_k) -  \rho(u_k)z||^2_2\Big)^{\frac{1}{2}}.\]

Theorem \ref{presineq} shows that $(\text{id}_{N_1} \otimes 1_{N_2})_*$ is injective, and $(\text{id}_{N_1} \otimes 1_{N_2})_*$ is continuous by Proposition \ref{psitilde}.  Now, by Proposition \ref{metricprop}, it suffices to show that if $\left\{[\pi_n\otimes 1_{N_2}]\right\}$ is Cauchy in $d_{N_1\otimes {N_2}}$ then $\left\{[\pi_n]\right\}$ is Cauchy in $d_{N_1}$.  Fix $\varepsilon > 0$.  Let $J \in \mathbb{N}$ be such that $\ds \sum_{k = J+1}^\infty \frac{4}{2^{2k}} < \frac{\varepsilon^2}{2}$. By Theorem \ref{deltarelated} there is a $\ds \delta\Big(J, \frac{\varepsilon}{\sqrt{2J}}\Big)$ such that for any pair of $\ds \delta\Big(J, \frac{\varepsilon}{\sqrt{2J}}\Big)$-related $n$-tuples of unitaries $(w_1,\dots,w_J)$ and $(w'_1,\dots,w'_J)$ in $N_1$ there is a unitary $v \in N_1$ such that for every $1 \leq k \leq J$,
\[||vw_k - w'_kv||_2 < \frac{\varepsilon}{\sqrt{2J}}\] 
or 
\[||vw_k - w'_kv||_2^2 < \frac{\varepsilon^2}{2J}.\]  Now let $K \in \mathbb{N}$ be such that for $n,m \geq K$ we have \[d_{N_1\otimes {N_2}}([\pi_n \otimes 1_{N_2}],[\pi_m \otimes 1_{N_2}])^2 < \frac{\delta\Big(J,\frac{\varepsilon}{\sqrt{2J}}\Big)}{2^{2J}}.\]  We will show that for any $n,m \geq K, d_{N_1}([\pi_n],[\pi_m]) < \varepsilon$. Fix $n,m \geq K$. From the definition of $d_{N_1\otimes {N_2}}$ there is a unitary $z \in N_1 \otimes {N_2}$ of the form $\ds z = \sum_j a_j\otimes b_j$ with $\left\{b_j \right\}$ an orthonormal basis in $L^2({N_2})$ such that \[\sum_{k=1}^{\infty} \frac{1}{2^{2k}} || z((\pi_n \otimes 1_{N_2})(u_k)) -  ((\pi_m\otimes 1_{N_2})(u_k))z||^2_2 < \frac{\delta\Big(J,\frac{\varepsilon}{\sqrt{2J}}\Big)}{2^{2J}}.\] So, for $1 \leq k' \leq J$ we have 
\[\frac{1}{2^{2k'}}||z((\pi_n \otimes 1_{N_2})(u_{k'})) -  ((\pi_m\otimes 1_{N_2})(u_{k'}))z||^2_2\]
\begin{align*}
 & \leq  \sum_{k=1}^{\infty} \frac{1}{2^{2k}} || z((\pi_n \otimes 1_{N_2})(u_k)) -  ((\pi_m\otimes 1_{N_2})(u_k))z||^2_2\\
& < \frac{\delta\Big(J,\frac{\varepsilon}{\sqrt{2J}}\Big)}{2^{2J}}.
\end{align*}
Therefore for every $1 \leq k \leq J$, \[||z((\pi_n \otimes 1_{N_2})(u_k)) -  ((\pi_m\otimes 1_{N_2})(u_k))z||^2_2 < 2^{2k} \frac{\delta\Big(J,\frac{\varepsilon}{\sqrt{2J}}\Big)}{2^{2J}} \leq \delta\Big(J,\frac{\varepsilon}{\sqrt{2J}}\Big).\]
Thus by Lemma \ref{tensordelta} we have that $(\pi_n(u_1),\dots,\pi_n(u_J))$ and $(\pi_m(u_1),\dots,\pi_m(u_J))$ are $\ds \delta\Big(J,\frac{\varepsilon}{\sqrt{2J}}\Big)$-related.  By Theorem \ref{deltarelated}, there is a unitary $v \in N_1$ such that for every $1\leq k \leq J$, 
\[||v\pi_n(u_k) - \pi_m(u_k)v||_2^2 < \frac{\varepsilon^2}{2J}.\]  So to complete the proof we observe that,
\begin{align*}
d_{N_1}([\pi_n],[\pi_m])^2 &\leq \sum_{k=1}^\infty \frac{1}{2^{2k}}||v\pi_n(u_k) - \pi_m(u_k)v||_2^2\\
&= \sum_{k=1}^J \frac{1}{2^{2k}}||v\pi_n(u_k) - \pi_m(u_k)v||_2^2\\
 &\quad+ \sum_{k=J+1}^\infty \frac{1}{2^{2k}}||v\pi_n(u_k) - \pi_m(u_k)v||_2^2\\
& < J\cdot \frac{\varepsilon^2}{2J} + \frac{\varepsilon^2}{2}\\
& = \varepsilon^2. \qedhere
\end{align*}
\end{proof}

\noindent Therefore, if $N$ is an arbitrary separable II$_1$-factor, we may consider $\HOM_w(\FR{A},N)$ to be embedded as a closed subset of the convex set $\HOM_w(\FR{A},N\otimes R)$.  

\begin{exmpl}\label{fullhull}
When $\FR{A}$ is such that UAT$(\FR{A}) = T(\FR{A})$, $\HOM_w(\FR{A},N) \cong T(\FR{A})$; so \[(\text{id}_N \otimes 1_R)_*(\HOM_w(\FR{A},N)) = \HOM_w(\FR{A},N\otimes R).\]
\end{exmpl}

\begin{exmpl}
If $N$ is a non-hyperfinite solid II$_1$-factor, for example $L(\mathbb{F}_2)$ or $L(\mathbb{Z}^2 \rtimes \text{SL}(2,\mathbb{Z}))$, then all of its subfactors are also solid, see \cite{ozawasolid} and \cite{solidexample}.  Non-hyperfinite solid factors are prime.  So we know that $N \otimes R$ does not embed into $N$.  Thus, if we let $\FR{A}$ be a separable dense $C^*$-subalgebra of $N \otimes R$ with a unique trace, then $\HOM_w(\FR{A},N)$ is empty, but $\HOM_w(\FR{A}, N\otimes R)$ is nonempty. Such a dense monotracial $C^*$-subalgebra exists by applying the argument found in Lemma \ref{sepsubfac} except replace all $W^*$'s with $C^*$'s and all instances of \tql weak\tqr with \tql norm.\tqr \, Therefore, in contrast to the situation of Example \ref{fullhull}, we have that $(\text{id}_M \otimes 1_R)_*(\HOM_w(\FR{A},N))$ is empty.
\end{exmpl}

\noindent It would be interesting to know more about the way $\HOM_w(\FR{A},N)$ sits inside \linebreak $\HOM_w(\FR{A}, N \otimes M)$ for $M$ McDuff and $\FR{A}$ such that UAT$(\FR{A}) \neq T(\FR{A})$.

\section{Open Questions}\label{oq}

We conclude this paper with a brief discussion of some open questions concerning $\HOM_w(\FR{A},M)$.
\vspace{.1in}

We do not yet have a characterization of extreme points in $\HOM_w(\FR{A},M)$ in general.  So our first question is the following.

\begin{?}\label{satchar}
What is a necessary and sufficient condition for $[\pi] \in \HOM_w(\FR{A},M)$ to be extreme?
\end{?}

\noindent Recall that there is a natural embedding $\HOM_w(\FR{A},M) \subseteq \HOM(\FR{A},M^\mathcal{U})$ via constant sequences (see Proposition \ref{betainj}).  It would be very useful to answer the following question regarding this inclusion.

\begin{?}\label{embedface}
Is the image of $\HOM_w(\FR{A},M)$ under this natural embedding a face of $\HOM(\FR{A},M^\mathcal{U})$?
\end{?}

\noindent By Theorem \ref{ultraextreme}, a positive answer to Question \ref{embedface} would provide an answer to Question \ref{satchar}.  We also have the following open problem.

\begin{?}\label{existence}
Does every $\HOM_w(\FR{A},M)$ have extreme points?
\end{?}

\noindent In the context of $\HOM(\FR{A},M^\mathcal{U})$ or $\HOM(N,R^\mathcal{U})$, the existence of extreme points is also open (and well-known in the latter situation)--see Remark \ref{existence1}. Lastly, we record a natural question coming from the results of \S \ref{stabilization}.

\begin{?}
Let $N$ be an arbitrary II$_1$-factor, and let $M$ be a McDuff II$_1$-factor (e.g. $M = R$ or $\ds M = \otimes_{i=1}^\infty N_i$ where $N_i = N$ for all $i$).  Can the set $(\text{id}_N \otimes 1_M)_*(\HOM_w(\FR{A},N))$ ever fail to be convex? What is the relationship between this set, its convex hull, and $\HOM_w(\FR{A}, N \otimes M)$?
\end{?}

\bibliographystyle{plain}
\bibliography{convexhom}{}

\begin{thebibliography}{10}

\bibitem{anderson}
J.~Anderson.
\newblock A {C}*-algebra $\mathcal{A}$ for which {E}xt$(\mathcal{A})$ is not a
  group.
\newblock {\em Ann. of Math. (2)}, 107(3):455--458, 1978.

\bibitem{arveson}
W.~Arveson.
\newblock Notes on extensions of {$C^{\sp*}$}-algebras.
\newblock {\em Duke Math. J.}, 44(2):329--355, 1977.

\bibitem{blackadar}
B.~E. Blackadar.
\newblock Traces on simple {AF} {$C^{\ast} $}-algebras.
\newblock {\em J. Funct. Anal.}, 38(2):156--168, 1980.

\bibitem{bbstww}
J.~Bosa, N.~P. Brown, Y.~Sato, A.~Tikuisis, S.~White, and W.~Winter.
\newblock Covering dimension of {$C^\ast$}-algebras and 2-coloured
  classification.
\newblock preprint, arXiv:1506.03974.

\bibitem{con}
N.~P. Brown.
\newblock Connes' embedding problem and {L}ance's {WEP}.
\newblock {\em Int. Math. Res. Not.}, (10):501--510, 2004.

\bibitem{invar}
N.~P. Brown.
\newblock Invariant means and finite representation theory of {$C^*$}-algebras.
\newblock {\em Mem. Amer. Math. Soc.}, 184(865):viii+105, 2006.

\bibitem{topdyn}
N.~P. Brown.
\newblock Topological dynamical systems associated to {${\rm II}_1$}-factors.
\newblock {\em Adv. Math.}, 227(4):1665--1699, 2011.
\newblock With an appendix by Narutaka Ozawa.

\bibitem{brocap}
N.~P. Brown and V.~Capraro.
\newblock Groups associated to {${\rm II}_1$}-factors.
\newblock {\em J. Funct. Anal.}, 264(2):493--507, 2013.

\bibitem{brownozawa}
N.~P. Brown and N.~Ozawa.
\newblock {\em {$C^*$}-algebras and finite-dimensional approximations},
  volume~88 of {\em Graduate Studies in Mathematics}.
\newblock American Mathematical Society, Providence, RI, 2008.

\bibitem{capfri}
V.~Capraro and T.~Fritz.
\newblock On the axiomatization of convex subsets of {B}anach spaces.
\newblock {\em Proc. Amer. Math. Soc.}, 141(6):2127--2135, 2013.

\bibitem{caplup}
V.~Capraro and M.~Lupini.
\newblock Introduction to sofic and hyperlinear groups and {C}onnes' embedding
  conjecture.
\newblock preprint, arXiv:1309.2034.

\bibitem{connes}
A.~Connes.
\newblock Classification of injective factors. {C}ases {$II_{1},$} {$II_{\infty
  },$} {$III_{\lambda },$} {$\lambda \not=1$}.
\newblock {\em Ann. of Math. (2)}, 104(1):73--115, 1976.

\bibitem{dadarlat}
M.~Dadarlat.
\newblock Nonnuclear subalgebras of {AF} algebras.
\newblock {\em Amer. J. Math.}, 122(3):581--597, 2000.

\bibitem{ding-hadwin}
H.~Ding and D.~Hadwin.
\newblock Approximate equivalence in von {N}eumann algebras.
\newblock {\em Sci. China Ser. A}, 48(2):239--247, 2005.

\bibitem{FHS}
I.~Farah, B.~Hart, and D.~Sherman.
\newblock Model theory of operator algebras {III}: elementary equivalence and
  {$\rm II_1$} factors.
\newblock {\em Bull. Lond. Math. Soc.}, 46(3):609--628, 2014.

\bibitem{newproof}
U.~Haagerup.
\newblock A new proof of the equivalence of injectivity and hyperfiniteness for
  factors on a separable {H}ilbert space.
\newblock {\em J. Funct. Anal.}, 62(2):160--201, 1985.

\bibitem{haagthor}
U.~Haagerup and S.~Thorbj{\o}rnsen.
\newblock A new application of random matrices: {${\rm Ext}(C^*_{\rm
  red}(F_2))$} is not a group.
\newblock {\em Ann. of Math. (2)}, 162(2):711--775, 2005.

\bibitem{hadwin}
D.~Hadwin.
\newblock Free entropy and approximate equivalence in von {N}eumann algebras.
\newblock In {\em Operator algebras and operator theory ({S}hanghai, 1997)},
  volume 228 of {\em Contemp. Math.}, pages 111--131. Amer. Math. Soc.,
  Providence, RI, 1998.

\bibitem{hadwin-li}
D.~Hadwin and W.~Li.
\newblock A note on approximate liftings.
\newblock {\em Oper. Matrices}, 3(1):125--143, 2009.

\bibitem{jung}
K.~Jung.
\newblock Amenability, tubularity, and embeddings into {$R^\omega$}.
\newblock {\em Math. Ann.}, 338(1):241--248, 2007.

\bibitem{mcduff}
D.~McDuff.
\newblock Central sequences and the hyperfinite factor.
\newblock {\em Proc. London Math. Soc. (3)}, 21:443--461, 1970.

\bibitem{ozawasolid}
N.~Ozawa.
\newblock Solid von {N}eumann algebras.
\newblock {\em Acta Math.}, 192(1):111--117, 2004.

\bibitem{Ozawa}
N.~Ozawa.
\newblock There is no separable universal {$\rm II_1$}-factor.
\newblock {\em Proc. Amer. Math. Soc.}, 132(2):487--490 (electronic), 2004.

\bibitem{solidexample}
N.~Ozawa.
\newblock An example of a solid von {N}eumann algebra.
\newblock {\em Hokkaido Math. J.}, 38(3):557--561, 2009.

\bibitem{popa}
S.~Popa.
\newblock Independence properties in subalgebras of ultraproduct {$\rm II_1$}
  factors.
\newblock {\em J. Funct. Anal.}, 266(9):5818--5846, 2014.

\bibitem{sakai1}
S.~Sakai.
\newblock On automorphism groups of {${\rm II}_{1}$}-factors.
\newblock {\em T\^ohoku Math. J. (2)}, 26:423--430, 1974.

\bibitem{orbits}
D.~Sherman.
\newblock Unitary orbits of normal operators in von {N}eumann algebras.
\newblock {\em J. Reine Angew. Math.}, 605:95--132, 2007.

\bibitem{autoultra}
D.~Sherman.
\newblock Notes on automorphisms of ultrapowers of {${\rm II}_1$} factors.
\newblock {\em Studia Math.}, 195(3):201--217, 2009.

\bibitem{voiculescu}
D.~Voiculescu.
\newblock A non-commutative {W}eyl-von {N}eumann theorem.
\newblock {\em Rev. Roumaine Math. Pures Appl.}, 21(1):97--113, 1976.

\end{thebibliography}

\end{document}